\let\emph\undefined\newcommand{\emph}[1]{\textsl{#1}}
\newcommand{\spaceplease}{\needspace{5\baselineskip}}
\tikzstyle{tikzfig}=[baseline=-0.25em,scale=0.5]
\tikzstyle{none}=[inner sep=0mm]
\newcommand{\tikzfig}[1]{%
	{\tikzstyle{every picture}=[tikzfig]
		\IfFileExists{#1.tikz}
		{\input{#1.tikz}}
		{%
			\IfFileExists{./figures/#1.tikz}
			{\input{./figures/#1.tikz}}
			{\tikz[baseline=-0.5em]{\node[draw=red,font=\color{red},fill=red!10!white] {\textit{#1}};}}%
	}}%
}
\tikzstyle{every loop}=[]
\tikzstyle{black dot}=[fill=black, draw=black, shape=circle, minimum size=3pt, inner sep=0pt]
\tikzstyle{black dot small}=[fill=black, draw=black, shape=circle, minimum size=3pt, inner sep=0pt]
\tikzstyle{wbox}=[fill=white, draw=black, shape=rectangle, minimum height=0.5cm, minimum width=0.01cm]
\tikzstyle{bbox}=[fill=white, draw=black, shape=rectangle, minimum height=0.5cm, minimum width=0.01cm]
\tikzstyle{rbox}=[fill=white, draw=black, densely dotted, shape=rectangle, minimum height=0.5cm, minimum width=0.01cm]
\tikzstyle{bwbox}=[draw=black, shape=rectangle, minimum width=2cm, minimum height=0.5cm]
\tikzstyle{bbwbox}=[draw=black, shape=rectangle, minimum width=1cm, minimum height=1cm]
\tikzstyle{big white circle}=[fill=white, draw=black, shape=circle, minimum width=0.75cm]
\tikzstyle{white dot big}=[fill=white, draw=black, shape=circle, inner sep=1pt]
\tikzstyle{white dot}=[fill=white, draw=black, shape=circle, minimum size=3pt, inner sep=0pt]
\tikzstyle{flat box}=[fill=white, draw=black, shape=rectangle, minimum width=1.3cm, minimum height=0.5cm]
\tikzstyle{square}=[fill=white, draw=black, shape=rectangle]
\tikzstyle{flat box 2}=[fill=white, draw=black, shape=rectangle, minimum height=0.5cm, minimum width=0.01cm]
\tikzstyle{bigbox}=[fill=white, draw=black, shape=rectangle, minimum height=0.5cm, minimum width=0.8cm]
\tikzstyle{over }=[front]
\tikzstyle{bigdisk}=[draw=black, shape=circle, minimum width=3cm]
\tikzstyle{wdisk}=[shape=circle, minimum width=0.48cm,fill=white]
\tikzstyle{bigdisk2}=[draw=black, fill=lightgray, shape=circle, minimum width=3cm]
\tikzstyle{little disk}=[fill=white, draw=black, shape=circle, minimum width=0.5cm]
\tikzstyle{mid arrow}=[-, postaction={on each segment={mid arrow}}]
\tikzstyle{end arrow}=[->]
\tikzstyle{mover}=[-, link]
\tikzstyle{mydots}=[-,dotted]
\tikzstyle{thick}=[-,line width=1pt]
\tikzstyle{dotarrow}=[->,dotted,draw=black,line width=1pt]
\tikzstyle{bdotarrow}=[->,dotted,draw=black,line width=1pt]
\tikzstyle{red mid arrow}=[-, draw={rgb,255: black,214; green,42; black,51}, postaction={on each segment={mid arrow}}, line width=1pt]
\tikzstyle{RED}=[-, draw={rgb,255: red,214; green,42; black,51}]
\tikzstyle{REDdotted}=[-,dashed]
\tikzstyle{reddots}=[-,dotted, draw={rgb,255: red,214; green,42; black,51}]
\tikzstyle{bluedashed}=[-,dashed, draw=black]
\tikzstyle{blue}=[-, draw=black]
\tikzstyle{blue mid arrow}=[-, draw={rgb,255: red,23; green,37; black,167}, postaction={on each segment={mid arrow}}, line width=1pt]
\tikzstyle{over}=[-, link]
\tikzstyle{mover}=[-, link]
\tikzstyle{mapsto}=[{|->}]
\tikzset{
	on each segment/.style={
		decorate,
		decoration={
			show path construction,
			moveto code={},
			lineto code={
				\path [#1]
				(\tikzinputsegmentfirst) -- (\tikzinputsegmentlast);
			},
			curveto code={
				\path [#1] (\tikzinputsegmentfirst)
				.. controls
				(\tikzinputsegmentsupporta) and (\tikzinputsegmentsupportb)
				..
				(\tikzinputsegmentlast);
			},
			closepath code={
				\path [#1]
				(\tikzinputsegmentfirst) -- (\tikzinputsegmentlast);
			},
		},
	},
	mid arrow/.style={postaction={decorate,decoration={
				markings,
				mark=at position .7 with {\arrow[#1]{stealth}}
	}}},
}
\tikzset{%
	link/.style    = { white, double = black, line width = 1.8pt,
		double distance = 0.4pt },
	channel/.style = { white, double = black, line width = 0.8pt,
		double distance = 0.8pt },
}
\newtheoremstyle{mytheorem}
{\topsep}
{\topsep}
{\slshape}
{0pt}
{\bfseries}
{.}
{ }
{\thmname{#1}\thmnumber{ #2}\thmnote{ {\normalfont\slshape(#3)}}}
\newtheoremstyle{mydefinition}
{\topsep}
{\topsep}
{\normalfont}
{0pt}
{\bfseries}
{.}
{ }
{\thmname{#1}\thmnumber{ #2}\thmnote{ {\normalfont\slshape(#3)}}}
\theoremstyle{mytheorem}
\newtheorem{theorem}{Theorem}[section]
\newtheorem*{rep@theorem}{\rep@title}
\newcommand{\newreptheorem}[2]{%
	\newenvironment{rep#1}[1]{%
		\def\rep@title{#2 \ref{##1}}%
		\begin{rep@theorem}}%
		{\end{rep@theorem}}}
\newtheorem{lemma}[theorem]{Lemma}
\newtheorem{proposition}[theorem]{Proposition}
\newtheorem{corollary}[theorem]{Corollary}
\theoremstyle{mydefinition}
\newenvironment{example}
{\pushQED{\qed}\exx}
{\popQED\endexx}
\newenvironment{remark}
{\pushQED{\qed}\remm}
{\popQED\endremm}
\numberwithin{equation}{section}
\newenvironment{pnum}{\begin{enumerate}[topsep=2pt,parsep=2pt,partopsep=2pt,itemsep=0pt,label={(\roman{*})}]}{\end{enumerate}}
\DeclareMathSymbol{\Phiit}{\mathalpha}{letters}{"08}\let\Phi\undefined\newcommand{\Phi}{\Phiit}
\DeclareMathSymbol{\Psiit}{\mathalpha}{letters}{"09}\let\Psi\undefined\newcommand{\Psi}{\Psiit}
\DeclareMathSymbol{\Sigmait}{\mathalpha}{letters}{"06}\let\Sigma\undefined\newcommand{\Sigma}{\Sigmait}
\DeclareMathSymbol{\Xiit}{\mathalpha}{letters}{"04}
\DeclareMathSymbol{\Lambdait}{\mathalpha}{letters}{"03}\let\Lambda\undefined\newcommand{\Lambda}{\Lambdait}
\DeclareMathSymbol{\Piit}{\mathalpha}{letters}{"05}\let\Pi\undefined\newcommand{\Pi}{\Piit}
\DeclareMathSymbol{\Gammait}{\mathalpha}{letters}{"00}\let\Gamma\undefined\newcommand{\Gamma}{\Gammait}
\DeclareMathSymbol{\Omegait}{\mathalpha}{letters}{"0A}\let\Omega\undefined\newcommand{\Omega}{\Omegait}
\DeclareMathSymbol{\Upsilonit}{\mathalpha}{letters}{"07}\let\Upsilon\undefined\newcommand{\Upsilon}{\Upilonit}
\DeclareMathSymbol{\Thetait}{\mathalpha}{letters}{"02}\let\Theta\undefined\newcommand{\Theta}{\Thetait}
\def\Hom{\mathrm{Hom}}
\def\End{\mathrm{End}}
\def\id{\mathrm{id}}
\let\to\undefined\newcommand{\to}{\longrightarrow}
\let\mapsto\undefined\newcommand{\mapsto}{\longmapsto}
\newcommand{\catf}[1]{\mathsf{#1}}
\newcommand{\Proj}{\operatorname{\catf{Proj}}}
\def\op{\mathrm{op}}
\newcommand{\dif}{\text{d}}
\newcommand{\IN}{{\normalfont\tiny in}}
\newcommand{\OUT}{{\normalfont\tiny out}}
\newcommand{\ra}[1]{\ \xrightarrow{\ \  #1  \  \ }\ }
\def\Ch{\catf{Ch}_k}
\newcommand{\cat}[1]{\mathcal{#1}}
\newcommand{\gbos}{{\gamma^\bullet_*}^\op}
\newcommand{\Ext}{\operatorname{Ext}}
\newcommand{\flint}{\int_{\text{\normalfont f}\mathbb{L}}}
\newcommand{\alg}{\catf{A}}\newcommand{\coalg}{\catf{F}}\newcommand{\somealg}{\catf{T}}\newcommand{\isomealg}{{\catf{T}^\bullet}}\newcommand{\somecoalg}{\catf{K}}
\newcommand{\ialg}{{\catf{A}^\bullet}}
\let\mod\undefined
\newcommand{\mod}{\catf{-mod}}
\newcommand{\rint}{\int^\mathbb{R}}
\newcommand{\frint}{\int^{\text{\normalfont f}\mathbb{R}}}
\newcommand{\bspace}[2]{\left( \  #1 \ , \    #2  \ \right)}
\newcommand{\ostar}{\mathbin{\mathpalette\make@circled\star}}
\newcommand{\make@circled}[2]{%
	\ooalign{$\m@th#1\smallbigcirc{#1}$\cr\hidewidth$\m@th#1#2$\hidewidth\cr}%
}
\newcommand{\smallbigcirc}[1]{%
	\vcenter{\hbox{\scalebox{0.77778}{$\m@th#1\bigcirc$}}}%
}
\newcommand{\mlabel}[1]{{\footnotesize $#1$}}
\newtheorem*{theorem*}{Theorem}
\newtheorem*{corollary*}{Corollary}
\renewcommand\section{\@startsection {section}{1}{\z@}%
	{-3.5ex \@plus -1ex \@minus -.2ex}%
	{2.3ex \@plus.2ex}%
	{\normalfont\scshape\centering}}
\begin{document} 
\vspace*{-10mm}
	\begin{flushright}
		\small
		{\sffamily [ZMP-HH/22-8]} \\
		\textsf{Hamburger Beiträge zur Mathematik Nr.~920}\\
		\textsf{CPH-GEOTOP-DNRF151}
	\end{flushright}
	
\vspace{8mm}
	
	\begin{center}
		\textbf{\large{Homotopy Invariants of Braided Commutative Algebras \\[0.5ex] and the Deligne Conjecture for Finite Tensor Categories}}\\
		\vspace{6mm}

{\large Christoph Schweigert ${}^a$ and  Lukas Woike ${}^b$}

\vspace{3mm}

\normalsize
{\slshape $^a$ Fachbereich Mathematik\\ Universit\"at Hamburg\\
	Bereich Algebra und Zahlentheorie\\
	Bundesstra\ss e 55\\  D-20146 Hamburg }

\vspace*{3mm}	

{\slshape $^b$ Institut for Matematiske Fag\\ K\o benhavns Universitet\\
	Universitetsparken 5 \\  DK-2100 K\o benhavn \O }

	\end{center}
\vspace*{1mm}
	\begin{abstract}\noindent 
		It is easy to find algebras $\mathbb{T}\in\mathcal{C}$ in a finite tensor category $\mathcal{C}$ that naturally come with a lift to a braided commutative algebra $\mathsf{T}\in Z(\mathcal{C})$ in the Drinfeld center of $\mathcal{C}$. In fact, any finite tensor category has at least two such algebras, namely the monoidal unit $I$ and the canonical end $\int_{X\in\mathcal{C}} X\otimes X^\vee$. Using the theory of braided operads, we prove that for any such algebra $\mathbb{T}$ the homotopy invariants, i.e.\ the derived morphism space from $I$ to $\mathbb{T}$, naturally come with the structure of a differential graded $E_2$-algebra. This way, we obtain a rich source of differential graded $E_2$-algebras in the homological algebra of finite tensor categories. We use this result to prove that Deligne's $E_2$-structure on the Hochschild cochain complex of a finite tensor category is induced by the canonical end, its multiplication and its non-crossing half braiding. With this new and more explicit description of Deligne's $E_2$-structure, we can lift the Farinati-Solotar bracket on the Ext algebra of a finite tensor category to an $E_2$-structure at cochain level. Moreover, we prove that, for a unimodular pivotal finite tensor category, the inclusion of the Ext algebra into the Hochschild cochains is a monomorphism of framed $E_2$-algebras, thereby refining a result of Menichi.
\end{abstract}

\tableofcontents

\normalsize

\section{Introduction and summary}
Over the last two decades,
the notion of a \emph{finite tensor category} developed 
in \cite{etingofostrik}
and then later presented comprehensively in the monograph \cite{egno}
has become one of the standard frameworks in quantum algebra:
A finite tensor category over a field $k$ (that we will assume to be algebraically closed throughout) is a $k$-linear abelian rigid monoidal category 
with simple unit, finite-dimensional morphism spaces, enough projective objects, finitely many simple objects up to isomorphism subject to the requirement that every object has finite length.
The notion has proven to be
sufficiently restrictive to prove a great number of very strong results.
At the same time, it remains flexible enough
to allow for an inexhaustible supply of examples coming from Hopf algebras or vertex operator algebras, see e.g.\
\cite{kassel,huang,hlz}.  
Moreover, finite tensor categories are intimately related to low-dimensional topology, more precisely to topological field theories and modular functors
\cite{rt2,turaev,kl,BDSPV15}
 --- both in the semisimple case and beyond semisimplicity.

In the non-semisimple situation,
the homological algebra of finite tensor categories is a rich and well-studied subject
	\cite{gk,etingofostrik,farinatisolotar,mpsw,menichi,bichon,hermann,lq,negronplavnik}.
The present article is concerned with the systematic construction 
and investigation of \emph{higher multiplicative structures} in the homological algebra of finite tensor categories, more specifically \emph{differential graded $E_2$-algebras}. 
The methods will be used to prove a number of results on the `standard' homological algebra quantities of a finite tensor category (possibly with more structure), namely the Hochschild cochains and the Ext algebra, but will also be used for the construction of new multiplicative structures.

Before giving the precise statements and our motivation, let us recall that the $E_2$-operad is the topological operad whose space $E_2(n)$ of $n$-ary operations is the space of embeddings $(\mathbb{D}^2)^{\sqcup n}\to\mathbb{D}^2$ built from translations and rescalings; we refer to \cite{may,Fresse1} for a textbook treatment. A differential graded $E_2$-algebra $A$ is  a (co)chain complex $A$ over $k$ together with maps $C_*(E_2(n);k)\to [A^{\otimes n},A]$, where $C_*(-;k)$ is the functor taking $k$-chains and $[-,-]$ denotes the	 hom complex. These maps are subject to the usual requirements regarding equivariance and operadic composition.
Since $E_2(n)$ is a classifying space of the (pure) braid group on $n$ strands, an $E_2$-algebra is an algebra whose commutativity behavior is controlled in a coherent way through the braid group. The (co)homology of an $E_2$-algebra is a Gerstenhaber algebra \cite{cohen}. If, in addition to translations and rescalings of disks, we allow rotations, we obtain the \emph{framed $E_2$-operad}. The (co)homology of a differential graded framed $E_2$-algebra is a Batalin-Vilkovisky algebra \cite{getzler}. 
It is impractical to construct differential graded $E_2$-algebras by exhibiting the needed action maps $C_*(E_2(n);k)\to [A^{\otimes n},A]$ by hand, and in fact, a large part of this article is devoted to developing efficient methods for the construction of $E_2$-algebras that are tailored towards applications in the homological algebra of finite tensor categories.

The article is centered around the following very natural problem:
In any finite tensor category $\cat{C}$,
one can define the canonical end $\mathbb{A}=\int_{X\in\cat{C}} X\otimes X^\vee$ (here $X^\vee$ denotes the dual of $X$), and it is a standard observation that the maps $X\otimes X^\vee \otimes X\otimes X^\vee\to X\otimes X^\vee$ contracting the middle two tensor factors via an evaluation endow $\mathbb{A}$ with the structure of an algebra in $\cat{C}$.  As a consequence,
the space $\cat{C}(I,\mathbb{A})$ of morphisms
from the monoidal unit $I$
to $\mathbb{A}$ becomes an algebra as well --- this time in vector spaces --- and it is well-known that this algebra is commutative. 
The vector space $\cat{C}(I,\mathbb{A})$ can be canonically identified with the zeroth Hochschild cohomology 
$HH^0(\cat{C})=\int_{X\in\cat{C}} \cat{C}(X,X)$
of $\cat{C}$ (seen here just as a linear category). The vector space $HH^0(\cat{C})$ inherits a commutative product
from the composition of morphisms
(both for the definition of $\mathbb{A}$ and $HH^0(\cat{C})$, we can restrict the ends to the subcategory $\Proj\cat{C}\subset \cat{C}$ of projective or, equivalently, injective objects without changing the value of the end).
With these products, the isomorphism
$\cat{C}(I,\mathbb{A})\cong HH^0(\cat{C})$
is an isomorphism of algebras.
The isomorphism $\cat{C}(I,\mathbb{A})\cong HH^0(\cat{C})$
of vector spaces
 has a natural generalization:
To this end, consider the homotopy invariants of $\mathbb{A}$, i.e.\ the derived morphism space $\cat{C}(I,\mathbb{A}^\bullet)$, where $\mathbb{A}^\bullet$ is an injective resolution of $\mathbb{A}$; its cohomology is $\Ext^*_\cat{C}(I,\mathbb{A})$.
  The cochain complex $\cat{C}(I,\mathbb{A}^\bullet)$
  		 is canonically equivalent to the Hochschild cochains of $\cat{C}$, 
i.e.\ the homotopy end $\rint_{X\in\Proj \cat{C}}    \cat{C}(X,X) $ of the endomorphism spaces of projective objects; 
\begin{align}
	\cat{C}(I,\mathbb{A}^\bullet) \simeq \rint_{X\in\Proj\cat{C}} \cat{C}(X,X) \ , \label{eqnAHH}
\end{align}
see e.g.\ \cite[Section~2.2]{bichon} for this statement in Hopf-algebraic form which goes back to \cite{cartaneilenberg}.
 The Hochschild cochain complex
$\rint_{X\in\Proj\cat{C}} \cat{C}(X,X)$, 
as the Hochschild cochain complex of \emph{any} linear category,
comes with more structure. By a result of Gerstenhaber \cite{gerstenhaber},
its cohomology is a Gerstenhaber algebra. Deligne famously conjectured 
in 1993 that this Gerstenhaber structure comes in fact from an $E_2$-structure 
at cochain level. This conjecture was proven in a 
variety of different ways \cite{tamarkin,cluresmith,bergerfresse}. 
For symmetric Frobenius algebras, the $E_2$-structure 
even extends to the structure of an algebra 
over the framed $E_2$-operad
\cite{tradlerzeinalian,costellotcft,kaufmann}.
Given that in zeroth cohomology, \eqref{eqnAHH}
reduces to the 
 isomorphism $\cat{C}(I,\mathbb{A})\cong HH^0(\cat{C})$ of commutative algebras, the following question is evident:
 \begin{itemize}
 	\item[(Q)] \slshape
Is there a structure on $\mathbb{A}$ or rather on an injective resolution $\mathbb{A}^\bullet$ that turns the homotopy invariants $\cat{C}(I,\mathbb{A}^\bullet)$ into an $E_2$-algebra that solves the Deligne Conjecture, thereby turning \eqref{eqnAHH} into an equivalence of $E_2$-algebras? 
If so, what is the needed structure?	
Or in a more pointed, but less precise way: Can  one use the canonical end
$\mathbb{A}$ to give a solution to Deligne's Conjecture?\normalfont\label{questionQ}
\end{itemize}

In order to answer this question, it will be beneficial to study multiplicative structures on homotopy invariants more generally:
Let $\mathbb{T}$ be an algebra inside a finite tensor category $\cat{C}$
that lifts to a 
braided commutative algebra $\somealg$ in the Drinfeld center $Z(\cat{C})$, i.e.\
$\mathbb{T}=U\somealg$ as algebras in $\cat{C}$, where $U:Z(\cat{C})\to\cat{C}$ is the forgetful functor,
and $\mu_\somealg \circ c_{\somealg,\somealg}=\mu_\somealg$ for the multiplication $\mu_\somealg$ of $\somealg$ and the braiding $c_{\somealg,\somealg}$ in $Z(\cat{C})$. 
Using the theory of braided operads, 
we prove the following 
very general result 
on the multiplicative structure
on homotopy invariants
$\cat{C}(I,\mathbb{T}^\bullet)$: 

\begin{reptheorem}{thmE2derivedhom}
	Let $\mathbb{T} \in \cat{C}$ be an algebra in a finite tensor category $\cat{C}$
together with a lift to a braided commutative algebra $\somealg \in Z(\cat{C})$ in the Drinfeld center.
Then the multiplication of $\mathbb{T}$ and the half braiding of $\mathbb{T}$ induce the structure of an $E_2$-algebra on the space $\cat{C}(I,\mathbb{T}^\bullet)$ of homotopy invariants of $\mathbb{T}$. 
\end{reptheorem}

In particular, $\Ext_\cat{C}^*(I,\mathbb{T})$ becomes a Gerstenhaber algebra.
Of course, Theorem~\ref{thmE2derivedhom} includes cases in which 
$\cat{C}(I,\mathbb{T}^\bullet)$ will be even `more commutative' 
(obviously, we find vector space valued commutative algebras in the semisimple case), 
but the result includes examples with a non-trivial Gerstenhaber bracket. In this sense, the `2' in $E_2$ is sharp
(Remark~\ref{remsharp}).

The connection between
Theorem~\ref{thmE2derivedhom}
and question~(Q) is as follows:
The right adjoint $R: \cat{C}\to Z(\cat{C})$ to the forgetful functor $U:Z(\cat{C})\to \cat{C}$ sends the unit $I$ of $\cat{C}$ to a braided commutative algebra $\alg=R(I) \in Z(\cat{C})$  thanks to a result of Davydov, Müger, Nikshych and Ostrik \cite{dmno}.
The underlying  object $\mathbb{A}=UR(I)=\int_{X\in\cat{C}}X\otimes X^\vee$ is the canonical end of the finite tensor category $\cat{C}$; and in fact, $U\alg =\mathbb{A}$ as algebras. By Theorem~\ref{thmE2derivedhom} the homotopy invariants $\cat{C}(I,\mathbb{A}^\bullet)$ now inherit a multiplication from the multiplication of $\mathbb{A}$ and its half braiding (that is called the \emph{non-crossing half braiding}). This specific $E_2$-structure is a solution to Deligne's Conjecture and hence an answer to question~(Q):

 \begin{reptheorem}{thhmcomparisondeligne}[Comparison Theorem]
 	For any finite tensor category $\cat{C}$, 
 	the algebra structure on the canonical end $\mathbb{A}=\int_{X\in\cat{C}} X\otimes X^\vee$ and its canonical lift to the Drinfeld center induces  an $E_2$-algebra structure on the 
 	homotopy invariants $\cat{C}(I,\mathbb{A}^\bullet)$. Under the equivalence $\cat{C}(I,\mathbb{A}^\bullet)\simeq \rint_{X\in \Proj \cat{C}} \cat{C}(X,X)$, this $E_2$-structure
 	provides a solution to Deligne's Conjecture in the sense that it induces the standard Gerstenhaber structure on the Hochschild cohomology of $\cat{C}$.
 \end{reptheorem}
 
 Since the definition of $\mathbb{A}=\int_{X\in\cat{C}} X\otimes X^\vee$ makes use of the monoidal structure of $\cat{C}$
 while the standard Gerstenhaber structure on the Hochschild cohomology of $\cat{C}$ sees only the underlying linear structure,
 the comparison result of Theorem~\ref{thhmcomparisondeligne}
 is very non-obvious.
 As a result, the proof of Theorem~\ref{thhmcomparisondeligne} is unfortunately very involved and occupies a large portion of the article.

 \subparagraph{Our motivation.}
Theorem~\ref{thhmcomparisondeligne} provides a new 
 proof of Deligne's Conjecture
for finite tensor categories, but giving a new proof of the conjecture is
not our main motivation.  Instead, we are motivated by the fact that the new description of Deligne's $E_2$-structure on the Hochschild cochains of a finite tensor category in terms of the canonical end is significantly simpler. 
This description is one of the cornerstones of the proof of the differential graded Verlinde formula in \cite{vd} for the differential graded modular functor of a modular category \cite{dmf}.

 \subparagraph{Other applications.}
Moreover, 
Theorem~\ref{thmE2derivedhom} can be used to construct \emph{other} $E_2$-algebras. We prove for example:

\begin{repcorollary}{corbraided}
	Let $\cat{C}$ be a braided finite tensor category.
	For any braided commutative algebra $B\in\bar{\cat{C}}\boxtimes\cat{C}$, denote by $B_\otimes$ the algebra in $\cat{C}$ obtained by applying the monoidal product functor to $B$. 
	Then the homotopy invariants $\cat{C}(I,B_\otimes^\bullet)$ of $B_\otimes$ naturally form an $E_2$-algebra. 
\end{repcorollary}

As a special case, this contains the \emph{dolphin algebra} needed as a critical auxiliary object in \cite{vd}, see Example~\ref{exampledolphin}.

If we apply Theorem~\ref{thmE2derivedhom} to
the monoidal unit, we obtain an $E_2$-structure on the algebra $\cat{C}(I,I^\bullet)$ of self-extensions of the monoidal unit, also called the \emph{Ext algebra}.
It induces on cohomology a well-known graded commutative product \cite{etingofostrik}. Its Gerstenhaber bracket is a generalization
of the Farinati-Solotar bracket \cite{farinatisolotar}
 to arbitrary finite tensor categories.
 
 \begin{repcorollary}{corcohomftc}
 	Let $\cat{C}$ be a finite tensor category. 
 	The self-extension algebra
 	$\cat{C}(I,I^\bullet)$  carries the structure of an $E_2$-algebra
 	that after taking cohomology induces the Farinati-Solotar Gerstenhaber bracket.
 	With this $E_2$-structure, there 
 	is a canonical map
 	$
 		\cat{C}(I,I^\bullet)\to \rint_{X\in \Proj\cat{C}}\cat{C}(X,X)  
 	$ to the Hochschild cochain complex of $\cat{C}$ equipped with the usual $E_2$-structure. This map is a map of $E_2$-algebras. After taking cohomology, it induces a monomorphism $
 		\Ext_\cat{C}^*(I,I) \to HH^*(\cat{C}) $
 	of Gerstenhaber algebras (with suitable models, it is also a monomorphism at cochain level).
 \end{repcorollary}
 
 One of the key advantages of Theorem~\ref{thmE2derivedhom} is the possibility to find, with relatively little effort, extensions to \emph{framed} $E_2$-algebras. For unimodular pivotal finite tensor categories, we can give the following framed extension of Corollary~\ref{corcohomftc}:
\begin{repcorollary}{corgenmen}
	For any unimodular pivotal finite tensor category $\cat{C}$, both the self-extension algebra $\cat{C}(I,I^\bullet)$ and the Hochschild cochain complex $\rint_{X\in\Proj\cat{C}}\cat{C}(X,X)$ come equipped with  a framed $E_2$-algebra structure such that the map
	$	\cat{C}(I,I^\bullet)\to \rint_{X\in \Proj\cat{C}}\cat{C}(X,X) $  
 is a map (and with suitable models even a monomorphism) of framed $E_2$-algebras. In cohomology, it induces a monomorphism
	$
		\Ext_\cat{C}^*(I,I) \to HH^*(\cat{C}) 
	$ of Batalin-Vilkovisky algebras.
\end{repcorollary}
This generalizes a result of Menichi~\cite{menichi} who previously proved the result at cohomology level for unimodular pivotal Hopf algebras by giving a Batalin-Vilkovisky structure
(i.e.\ the structure of an algebra over the homology of the framed $E_2$-operad)
on the self-extension algebra.

\subparagraph{Conventions.}
 \begin{enumerate}[topsep=2pt,parsep=2pt,partopsep=2pt,itemsep=0pt,label={(\arabic{*})}]
	\item
	We fix for the entire article an algebraically closed field $k$.

	\item We denote by
 $\Ch$  
the symmetric monoidal category of chain complexes over $k$.
Unless otherwise stated, it will be equipped with its projective model structure. In this model structure, the weak equivalences (for short: equivalences) are quasi-isomorphisms while the fibrations are degree-wise surjections.
We call
a  (co)fibration which is also an equivalence a \emph{trivial (co)fibration}.
Equivalences will be denoted by $\simeq$ (we reserve $\cong$ for isomorphisms).
As a \emph{(canonical) equivalence} between chain complexes, we understand a (canonical) zigzag of equivalences.
We assume 
 functors between linear and differential graded categories 
 automatically to be enriched.

\item
Concerning the convention for duality in monoidal categories, we follow~\cite[Section~2.10]{egno}:
Let $\cat{C}$ be a rigid monoidal category and $X\in\cat{C}$.  We denote \begin{itemize}
	\item the  \emph{left dual}  by $X^\vee$
and by $d_X:X^\vee \otimes X\to I$ and 
$b_X:I\to X\otimes X^\vee$
the 
 evaluation 
 and 
 coevaluation, respectively,

\item 
the  \emph{right dual} by ${^\vee \! X}$ 
and by $\widetilde d_X : X\otimes  {^\vee \! X} \to I$ and 
$\widetilde b_X : I \to {^\vee \! X}\otimes X$
the 
evaluation 
and 
coevaluation, respectively.
\end{itemize}
Left and right duality induce the following adjunction isomorphisms for $X,Y,Z\in\cat{C}$:
\begin{align}
\cat{C}(X\otimes Y,Z)&\cong \cat{C}(X,Z\otimes Y^\vee) \ , \\
\cat{C}(Y^\vee\otimes X,Z)&\cong \cat{C}(X,Y\otimes Z) \ , \\
\cat{C}(X\otimes {^\vee \! Y}, Z) &\cong \cat{C}(X,Z\otimes Y)\ ,\\
\cat{C}(Y\otimes X,Z)&\cong \cat{C}(X,{^\vee \! Y} \otimes Z)  
\end{align}

\item 
If $\cat{C}$ is a finite (tensor) category, then it is a module category over the symmetric monoidal category of finite-dimensional vector spaces over $k$.
As a result, we can form the tensoring $V\otimes X\in \cat{C}$ of a finite-dimensional vector space $V$ with an object $X\in \cat{C}$ and similarly a powering $X^V=V^*\otimes X\in \cat{C}$, where $V^*$ is the dual of $V$.

\item
In several places, we will use the \emph{graphical calculus} for morphisms in (braided) monoidal categories, see e.g.\ \cite{kassel}. 
In this calculus, objects are vertical lines and the monoidal product corresponds to the  juxtaposition of lines (the monoidal unit is the empty collection of lines) while composition is represented by vertical stacking. We denote the braiding and inverse braiding as an overcrossing and undercrossing, respectively, 
 while  evaluation and coevaluation are written as a cap and cup, respectively.
 Morphisms should be read from bottom to top. \label{congraphcalc}
 
 \end{enumerate}

 \subparagraph{Acknowledgments.} 
We would like to thank 
Adrien Brochier, 
Lukas Müller,
 Nathalie Wahl
 and Yang Yang
for helpful discussions. 

CS is supported by the Deutsche Forschungsgemeinschaft (DFG, German Research
Foundation) 
under Germany’s Excellence Strategy -- EXC 2121 ``Quantum Universe'' -- 390833306.
LW gratefully acknowledges support by 
the Danish National Research Foundation through the Copenhagen Centre for Geometry
and Topology (DNRF151)
and by the European Research Council (ERC) under the European Union's Horizon 2020 research and innovation programme (grant agreement No~772960). 

\noindent\begin{minipage}{0.85\textwidth}\raggedright
	This project has received funding from the European Union’s Horizon 2020 research and innovation
	programme under the Marie Sk\l odowska-Curie grant agreement No~101022691. \end{minipage}\hfill
\begin{minipage}{0.1\textwidth} \includegraphics[width=0.8\textwidth]{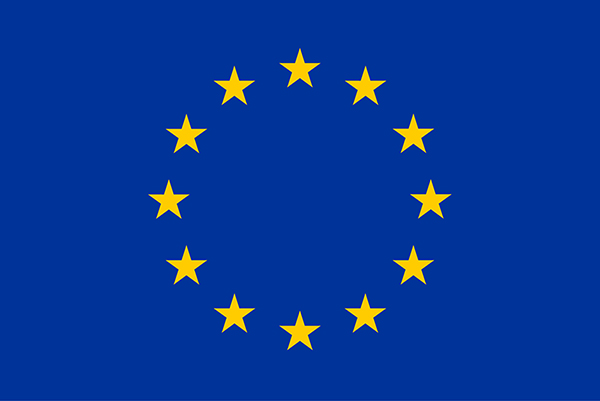}\end{minipage}

\section{A reminder on the Drinfeld center and related structures}
We assume that the reader is familiar with the notion of a \emph{braided finite tensor category}, i.e.\  a finite tensor category $\cat{B}$
that comes with natural isomorphisms $c_{X,Y}:X\otimes Y\cong Y\otimes X$ 	 subject to the usual hexagon axioms.
To any finite tensor category $\cat{C}$, we can assign a braided finite tensor category, namely its \emph{Drinfeld center}.
The Drinfeld center $Z(\cat{C})$ is the category whose objects are pairs of 
an object $X\in \cat{C}$ together with a \emph{half braiding} which is a natural isomorphism
natural isomorphism $X\otimes-\cong -\otimes X$ satisfying certain coherence conditions. Details on these notions can be found in~\cite[Chapters~6-8]{egno}.
By 
\cite[Theorem~3.34]{etingofostrik}
  $Z(\cat{C})$ is a finite tensor category again, and it is (by construction) braided (see also \cite[Theorem~3.8]{shimizuunimodular}). 

The forgetful functor $U:Z(\cat{C})\to\cat{C}$ is monoidal and exact and, for this reason, has both an oplax monoidal left adjoint $L:\cat{C}\to Z(\cat{C})$
and also a lax monoidal right adjoint $R: \cat{C}\to Z(\cat{C})$; we refer to \cite{bruguieresvirelizier,shimizuunimodular} for a detailed overview.
This implies the following: Since $I\in\cat{C}$ is naturally an algebra, \begin{align} \coalg:=LI\ , \quad \alg:=RI \end{align} 
inherit the structure of a coalgebra and an algebra in $Z(\cat{C})$, respectively.  
One can prove that the underlying objects of $\coalg$ and $\alg$ are given by the canonical coend and end, respectively, i.e.\
$U\coalg =\mathbb{F}=\int^{X\in\cat{C}} X^\vee \otimes X$
and $U\alg=\mathbb{A}=\int_{X\in\cat{C}} X\otimes X^\vee$. 
In order to give the half braiding $c_{\mathbb{F},Y}:\mathbb{F} \otimes Y \to Y\otimes \mathbb{F}$ and $c_{\mathbb{A},Y}:\mathbb{A} \otimes Y \to Y\otimes \mathbb{A}$ with $Y\in\cat{C}$ (that is often referred to as \emph{non-crossing half braiding}), it suffices  by the universal property of the (co)end to give the restriction to $X^\vee \otimes X \otimes Y$ and the component $X\otimes X^\vee \otimes Y$, respectively,
by
\begin{align} X^\vee \otimes X \otimes Y \ra{  (c_{\mathbb{F},Y})_X    } Y\otimes Y^\vee \otimes X^\vee \otimes X \otimes Y \cong Y \otimes \left(   (X\otimes Y)^\vee \otimes X\otimes Y     \right) \to Y \otimes \mathbb{F}    \label{firsthalfbraiding}  \\
	\mathbb{A}\otimes Y \to  \left(  Y\otimes X \otimes (Y\otimes X)^\vee \right)  \otimes Y\cong Y\otimes X\otimes X^\vee \otimes Y^\vee \otimes Y \ra{  (c_{\mathbb{A},Y})^X  } Y \otimes X \otimes X^\vee \ , \label{firsthalfbraiding2}
\end{align}
where the last map in \eqref{firsthalfbraiding} and the first map in \eqref{firsthalfbraiding2} are the structure maps of the coend and the end and where

\begin{align}
	{\footnotesize\tikzfig{halfbraiding}  }     \ .    {  \normalsize  \label{defeqnhalfbraiding}}
\end{align}
\normalsize
As stated in our conventions on the graphical calculus on page~\pageref{congraphcalc}, the cup is the evaluation, and the cap is the coevaluation. 
Thanks to $\mathbb{F}=U\coalg$ and $\mathbb{A}=U\alg$, we see that
$\mathbb{F}$ is coalgebra in $\cat{C}$ while $\mathbb{A}$ an algebra in $\cat{C}$.
The coproduct $\delta : \mathbb{F}\to\mathbb{F}\otimes\mathbb{F}$ of $\mathbb{F}$ can be described as being induced by the coevaluation
\begin{align}
	X^\vee \otimes X\ra{X^\vee \otimes b_X \otimes X} X^\vee \otimes X\otimes X^\vee \otimes X \ ; \label{eqncomultdelta}
\end{align}
dually, the 
product $\gamma : \mathbb{A}\otimes \mathbb{A}\to\mathbb{A}$
can be described as being induced by the 
 evaluation 
\begin{align}
	X \otimes X^\vee \otimes X\otimes X^\vee \ra{X\otimes d_X\otimes X^\vee} X\otimes X^\vee \ . \label{eqngamma}
\end{align}
We have ${^\vee\coalg}\cong \alg$ (and ${^\vee \mathbb{F}} \cong \mathbb{A}$), and under this duality, the product 
 on $\alg$ (and $\mathbb{A}$) and the coproduct on $\coalg$ (and $\mathbb{F}$) translate into each other.
 
 We will denote by $D$ the 
 distinguished invertible object of $\cat{C}$
 that controls  by \cite{eno-d} the quadruple dual of a finite tensor category through the \emph{Radford formula}
$ -^{\vee\vee\vee\vee}\cong D\otimes-\otimes D^{-1} $
that generalizes the classical result on the quadruple of the antipode of a Hopf algebra \cite{radford}.
By \cite[Lemma~5.5]{shimizuunimodular}
$D$ is the socle of the projective cover of the monoidal unit. 
By \cite[Lemma~4.7 \& Theorem~4.10]{shimizuunimodular} there are canonical natural isomorphisms
\begin{align}
	L(D\otimes-) \cong R \cong L(-\otimes D) \ ,  \quad R(D^{-1}\otimes -)\cong L \cong R (-\otimes D^{-1})  \ .    \label{eqnLandR}
\end{align}
One calls a finite tensor category \emph{unimodular} if $D\cong I$. 
By \cite[Theorem~4.10]{shimizuunimodular} a finite tensor category is unimodular if and only if $L\cong R$.

\section{The $E_2$-structure on homotopy invariants\label{secinvariantscommalg}}
In this section,
we present our result for the systematic construction
of an $E_2$-algebra structure on homotopy invariants of braided commutative algebras. 
This will rely heavily on the notions of a braided operad 
whose definition we briefly recall from \cite[Section~5.1]{Fresse1}:
Let $\cat{M}$ be a closed and bicomplete symmetric monoidal category. A \emph{braided operad} $\cat{O}$ consists of objects $\cat{O}(n)\in\cat{M}$ of $n$-ary operations, where $n\ge 0$, that carry an action of the braid group $B_n$ on $n$ strands
(as commonplace in the theory of operads, we will work with \emph{right} actions throughout), a unit $I\to \cat{O}(1)$ for 1-ary operations (here $I$ is the unit of $\cat{M}$) and composition maps
\begin{align}
	\cat{O}(n) \otimes \cat{O}(m_1)\otimes\dots \otimes \cat{O}(m_n) \to \cat{O}(m_1+\dots+m_n) 
\end{align}
such that the composition is associative, unital and compatible with the braid group actions. Morphisms of braided operads are defined analogously to the symmetric case.
There is an obvious restriction functor $\catf{Res}: \catf{SymOp}(\cat{M})\to \catf{BrOp}(\cat{M})$ from symmetric operads in $\cat{M}$ to braided operads in $\cat{M}$ (it restricts arity-wise along the epimorphisms from braid groups to symmetric groups). This restriction functor has a left adjoint, the \emph{symmetrization}
\begin{align}\label{eqn:adjunctionoperads}
	\xymatrix{
		\catf{Sym} \,:\, \catf{BrOp}(\cat{M})        ~\ar@<0.5ex>[r]&\ar@<0.5ex>[l]  ~\catf{SymOp}(\cat{M})  \,:\, \catf{Res} \ , 
	}
\end{align}
that takes arity-wise orbits of pure braid group actions, i.e.\ $(\catf{Sym}\cat{O})(n)=\cat{O}(n)/P_n$ for $n\ge 0$, where $P_n$ is the pure braid group on $n$ strands.

Let $\cat{B}$ be an $\cat{M}$-enriched braided monoidal category. Then for any 
$A\in \cat{B}$, the objects
$\End_A(n) := [A^{\otimes n},A]$ for $n\ge 0$ (we denote here by $[-,-]$ the $\cat{M}$-valued hom)
form a braided operad in $\cat{M}$, the \emph{braided endomorphism operad of $A$}.
If $\cat{O}$ is a braided operad in $\cat{M}$, a \emph{braided $\cat{O}$-algebra in $\cat{B}$} is an object $A\in\cat{B}$ and a map $\cat{O}\to \End_A$ of braided operads.

A \emph{braided commutative algebra} $\somealg \in \cat{B}$ in a braided monoidal category $\cat{B}$ is an algebra whose multiplication $\mu :\somealg \otimes\somealg\to \somealg$ satisfies $ \mu \circ c_{\somealg,\somealg}=\mu$, where $c_{\somealg,\somealg}:\somealg\otimes \somealg\to \somealg\otimes \somealg$ is the braiding. 
Although we will treat general braided commutative algebras (and braided commutative coalgebras which are defined dually), it is instructive to think of the example of the canonical algebra and the canonical coalgebra in the Drinfeld center that were defined in~Section~\ref{firstsection}:

\begin{lemma}[Davydov, Müger, Nikshych,  Ostrik $\text{\cite[Lemma~3.5]{dmno}}$]\label{lemmadmno}
	For any finite tensor category $\cat{C}$,	the canonical algebra $\alg\in Z(\cat{C})$ is braided commutative and the canonical coalgebra $\coalg \in Z(\cat{C})$ is braided cocommutative.  
\end{lemma}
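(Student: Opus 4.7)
We prove braided commutativity of the algebra $\alg = RI$; the coalgebra case for $\coalg = LI$ is formally dual via the adjunction $L \dashv U$ and the oplax monoidal structure of $L$, replacing the counit of $U \dashv R$ by the unit of $L \dashv U$.

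The strategy is to use only the adjunction $U \dashv R$ and basic coherence of half braidings, without unpacking the end $\mathbb{A} = \int_{X \in \cat{C}} X \otimes X^\vee$ in detail. Since $U$ is strong monoidal, $R$ is lax monoidal, and the multiplication $\mu_\alg$ equals (up to the unit isomorphism $I \otimes I \cong I$) the lax structure map $RI \otimes RI \to R(I \otimes I)$, whose adjoint under $U \dashv R$ is the pair of counits $\epsilon_I \otimes \epsilon_I : \mathbb{A} \otimes \mathbb{A} \to I \otimes I$. Here $\epsilon_I : \mathbb{A} \to I$ is the counit of $U \dashv R$ at $I$, which in terms of the end description is the projection onto the component $X = I$ (using $I^\vee = I$). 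Under the adjunction bijection
\[
\Hom_{Z(\cat{C})}(\alg \otimes \alg, \alg) \cong \Hom_{\cat{C}}(\mathbb{A} \otimes \mathbb{A}, I),
\]
the identity $\mu_\alg \circ c_{\alg, \alg} = \mu_\alg$ is therefore equivalent to
\[
(\epsilon_I \otimes \epsilon_I) \circ c_{\mathbb{A}, \mathbb{A}} = \epsilon_I \otimes \epsilon_I \qquad \text{in } \cat{C},
\]
where $c_{\mathbb{A}, \mathbb{A}} := U c_{\alg, \alg}$ is the underlying morphism in $\cat{C}$ of the braiding of $Z(\cat{C})$ (i.e.\ the non-crossing half braiding evaluated on $\mathbb{A}$ itself).

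This residual equation follows from two coherence facts. First, naturality of the half braiding $c_{\mathbb{A}, -}$ applied to the morphism $\epsilon_I : \mathbb{A} \to I$ gives
\[
(\epsilon_I \otimes \mathbb{A}) \circ c_{\mathbb{A}, \mathbb{A}} = c_{\mathbb{A}, I} \circ (\mathbb{A} \otimes \epsilon_I).
\]
Second, the coherence axioms for half braidings force $c_{\mathbb{A}, I}$ to be the canonical unit isomorphism $\mathbb{A} \otimes I \cong I \otimes \mathbb{A}$. Decomposing $\epsilon_I \otimes \epsilon_I = (I \otimes \epsilon_I) \circ (\epsilon_I \otimes \mathbb{A})$ and combining these two identities yields the desired equation in one line.

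The only non-formal point is identifying the algebra structure $\mu_\alg$ with the lax monoidal structure of $R$ evaluated at the trivial algebra $I \in \cat{C}$; this is the standard construction of algebras out of lax monoidal functors, and once it is in place, neither the explicit formula \eqref{firsthalfbraiding2}--\eqref{defeqnhalfbraiding} for the non-crossing half braiding nor any graphical computation with ends is required. The braided cocommutativity of $\coalg$ is obtained by dualising every step: $L$ oplax monoidal, unit $\eta_I : I \to \mathbb{F}$, naturality of the half braiding of $\coalg$, and triviality of $c_{\mathbb{F}, I}$.
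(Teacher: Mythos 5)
Your proof is correct, and it is worth noting that the paper itself gives no proof of this lemma: it is quoted from \cite[Lemma~3.5]{dmno}, with the remark that the semisimplicity assumption there can be removed and a pointer to \cite[Appendix~A.3]{shimizucoend} for a module-categorical formulation covering the general case. Your argument is therefore a genuinely independent route, and an economical one. Since the paper defines the algebra structure on $\alg=RI$ precisely as the one induced by the lax monoidal structure of $R$ (obtained from $U\dashv R$ by doctrinal adjunction), the adjunct of $\mu_\alg$ under $\Hom_{Z(\cat{C})}(\alg\otimes\alg,RI)\cong\Hom_{\cat{C}}(\mathbb{A}\otimes\mathbb{A},I)$ is indeed $\epsilon_I\otimes\epsilon_I$ up to unit constraints, and injectivity of the adjunction bijection legitimately reduces $\mu_\alg\circ c_{\alg,\alg}=\mu_\alg$ to $(\epsilon_I\otimes\epsilon_I)\circ\sigma_{\mathbb{A}}=\epsilon_I\otimes\epsilon_I$, where $\sigma_{\mathbb{A}}=Uc_{\alg,\alg}$ is the half braiding of $\alg$ evaluated at its own underlying object. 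Your two coherence inputs are exactly what is needed and are both valid: the half braiding of an object of $Z(\cat{C})$ is natural with respect to \emph{arbitrary} morphisms of $\cat{C}$, in particular with respect to $\epsilon_I$, which is not in the image of $U$ --- this is the one place where the argument genuinely uses the center rather than an abstract braided category sitting over $\cat{C}$ --- and $\sigma_I$ is forced to be the unit constraint by the tensor-compatibility axiom for half braidings together with invertibility. The dual argument for $\coalg=LI$ via $L\dashv U$ and the unit $\eta_I$ goes through verbatim. What your approach buys is a proof that never touches the (co)end description of $\mathbb{A}$ and works uniformly without semisimplicity, using only the (co)units of the adjunctions and naturality and unitality of half braidings; what the cited references provide in addition is the compatibility with the explicit (co)end formulas, but since the paper takes the lax-monoidal definition of $\mu_\alg$ as primary, your identification is consistent with its conventions. (Your parenthetical description of $\epsilon_I$ as the projection of $\int_{X}X\otimes X^\vee$ onto the $X=I$ component is also correct, though not needed for the argument.)
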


In \cite{dmno} this Lemma is only given in the semisimple case, but the argument can be extended to the non-semisimple case as well, see \cite[Appendix~A.3]{shimizucoend} for a formulation in terms of module categories that also covers the situation as given in Lemma~\ref{lemmadmno}.

For a  braided finite tensor category $\cat{B}$ 
and a braided commutative algebra $\somealg \in \cat{B}$, denote by
$\iota : \somealg \to \isomealg$ an injective resolution of $\somealg$ in $\cat{B}$; the map $\iota$ will be referred to as \emph{coaugmentation}.
For $n\ge 0$, we define the  map
\begin{align}  k \to      \cat{B}(\somealg^{\otimes n},\isomealg)  \label{eqnmapBn}
\end{align}
that selects the map $\somealg^{\otimes n} \ra{\mu } \somealg \ra{\iota} \isomealg$ defined as the concatenation of the ($n$-fold) multiplication $\mu$ of $\somealg$ with the coaugmentation $\iota$ of the injective resolution.
The map itself is not very interesting, but the non-trivial point is that it is actually a map of chain complexes \emph{with $B_n$-action}, where the $B_n$-action on $k$ is trivial and the one on $\cat{B}(\somealg^{\otimes n},\isomealg)$ comes by virtue of $\cat{B}$ being a braided category. 
The fact that \eqref{eqnmapBn} is  really $B_n$-equivariant follows from the assumption that $\somealg$ is braided commutative.

Using again that $\cat{B}$ is braided,
the mapping complex $\cat{B}(\isomealg^{\otimes n},\isomealg)$
comes with a $B_n$-action.
The precomposition with the map $\iota^{\otimes n}:\somealg^{\otimes n}\to \isomealg^{\otimes n}$
yields a map
\begin{align}\label{eqntrivialfibration}
	\left(   \iota^{\otimes n}  \right)^* : \cat{B}(\isomealg^{\otimes n},\isomealg)\to \cat{B}( \somealg^{\otimes n},\isomealg) \ . 
\end{align}
This map is also $B_n$-equivariant.
We may now define the chain complex $J_\somealg(n)$ as a pullback
\begin{equation}\label{eqncheckDdefined}
	\begin{tikzcd}
		\ar[rr] \ar[dd] J_\somealg(n) &&  \cat{B}(\isomealg^{\otimes n},\isomealg)   \ar[dd, "\left(   \iota^{\otimes n}  \right)^*"]  \\ \\
		k 	 \ar[rr,swap,"\eqref{eqnmapBn}"]    && \cat{B}(\somealg^{\otimes n},\isomealg	) \ ,  \\
	\end{tikzcd}
\end{equation}
of differential graded $k$-vector spaces with $B_n$-action (it will be explained in the proof of Proposition~\ref{propJ} below that $\left(   \iota^{\otimes n}  \right)^*$ is, in particular, a fibration, so that $J_\somealg(n)$ is also a homotopy pullback).

\begin{proposition}\label{propJ}
	Let $\cat{B}$ be a braided finite tensor category 
	and $\somealg\in \cat{B}$
	a
	braided commutative algebra.
	The chain complexes $J_\somealg(n)$ defined  in \eqref{eqncheckDdefined} for $n\ge 0$
	naturally form a braided operad $J_\somealg$ in differential graded $k$-vector spaces such that the following holds:
	\begin{pnum}
		\item The operad $J_\somealg$ is acyclic in the sense that it comes with a canonical trivial fibration $J_\somealg \to k$, i.e.\
		$J_\somealg$ is a model for the braided commutative operad.
		\label{propJ1}
		\item The maps $J_\somealg(n)\to \cat{B}(\isomealg^{\otimes n},\isomealg)$ from \eqref{eqncheckDdefined} endow $\isomealg$ with the structure of a braided $J_\somealg$-algebra.\label{propJ2}
	\end{pnum}
\end{proposition}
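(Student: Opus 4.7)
The plan is to promote the pullback \eqref{eqncheckDdefined} to a pullback of braided operads in $\Ch$. The upper-right corner $\End_\isomealg(n)=\cat{B}(\isomealg^{\otimes n},\isomealg)$ is the braided endomorphism operad of $\isomealg$, and the lower-left corner $k$ (placed in degree $0$ with trivial braid action) is the braided commutative operad. To define the operad structure on $J_\somealg$, for $(c,f)\in J_\somealg(n)$ and $(c_i,f_i)\in J_\somealg(m_i)$ with $1\le i\le n$, I would set the operadic composition to be
\begin{align*}
((c,f),(c_1,f_1),\ldots,(c_n,f_n)) \,\longmapsto\, (c\,c_1\cdots c_n,\; f\circ(f_1\otimes\cdots\otimes f_n)),
\end{align*}
with unit $(1,\id_\isomealg)\in J_\somealg(1)$. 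The compatibility with the pullback constraint reduces, after substituting the constraints on the individual factors, to the iterated associativity relation $\mu\circ(\mu\otimes\cdots\otimes\mu)=\mu$ for the multiplication of $\somealg$.

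For the $B_n$-actions I would verify that \eqref{eqncheckDdefined} is a diagram of $B_n$-equivariant chain complexes. The vertical map $(\iota^{\otimes n})^*$ is $B_n$-equivariant because $\iota^{\otimes n}$ intertwines the braidings of $\cat{B}$ on $\somealg^{\otimes n}$ and $\isomealg^{\otimes n}$. The horizontal map \eqref{eqnmapBn} is $B_n$-equivariant \emph{precisely} because $\somealg$ is braided commutative: for any $\beta\in B_n$, the induced automorphism of $\somealg^{\otimes n}$ is absorbed by the $n$-fold multiplication, so that $\iota\circ\mu\circ\beta = \iota\circ\mu$. Associativity, unitality and braid-equivariance of the composition on $J_\somealg$ are then inherited componentwise from $\End_\isomealg$ and $k$.

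For part \ref{propJ1}, the projection $J_\somealg\to k$ onto the lower leg is an operad map by construction; to see that it is a trivial fibration it suffices to show that the right-hand vertical map $(\iota^{\otimes n})^*$ is one in $\Ch$, as this property is stable under pullback. The latter follows from standard homological algebra: the coaugmentation $\iota\colon\somealg\to\isomealg$ is a quasi-isomorphism, and its tensor power $\iota^{\otimes n}$ remains a quasi-isomorphism by biexactness of $\otimes$ in the finite tensor category $\cat{B}$; since $\isomealg$ is a complex of injective objects, restriction of morphisms along $\iota^{\otimes n}$ is a trivial fibration of chain complexes. This also makes \eqref{eqncheckDdefined} a homotopy pullback. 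For part \ref{propJ2}, the other projection $J_\somealg(n)\to\End_\isomealg(n)$ is by definition a map of braided operads, producing the stated braided $J_\somealg$-algebra structure on $\isomealg$.

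The main obstacle I expect, apart from the bookkeeping for the composition identity, is the $B_n$-equivariance of \eqref{eqnmapBn}: this is the step where the hypothesis of braided commutativity of $\somealg$ enters in essential form and is what upgrades $J_\somealg$ from a model for the associative operad to a model for the braided commutative operad. The remaining model-theoretic input, that tensor powers of $\iota$ stay trivial cofibrations, is routine once biexactness of $\otimes$ is in hand.
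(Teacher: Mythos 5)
Your proposal is correct and follows essentially the same route as the paper: the operadic composition is defined through the pullback (your explicit pairs $(c,f)$ unpack exactly the paper's universal-property construction, with the constraint reducing to associativity of $\mu$), braided commutativity of $\somealg$ gives the $B_n$-equivariance of \eqref{eqnmapBn}, part~(ii) holds by construction, and acyclicity follows from pullback-stability of trivial fibrations once $(\iota^{\otimes n})^*$ is one. The only point to make explicit is that $\iota^{\otimes n}$ is not merely a quasi-isomorphism but a trivial cofibration in the injective model structure on complexes in $\cat{B}$ (it is also a degreewise monomorphism, again by exactness of $\otimes$, and $\isomealg^{\otimes n}$ stays degreewise injective), which together with fibrancy of $\isomealg$ is what makes $(\iota^{\otimes n})^*$ a trivial fibration in $\Ch$ and not just an equivalence.
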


\begin{proof} We first establish the braided operad structure:
	The complexes $J_\somealg(n)$ come with an $B_n$-action by definition. Moreover, the identity of $\isomealg$ seen as map $k\to \cat{B}(\isomealg,\isomealg)$ yields a map $k \to J_\somealg(1)$ that we define as a unit.
	In order to define the operadic composition, let $m_1,\dots,m_n \ge 0$ be given. By definition of $J_\somealg$ we obtain the maps~$(*)$ and~$(**)$ in the following diagram: 
	\small
	\begin{equation}
		\begin{tikzcd}
			J_\somealg(n)\otimes \bigotimes_{j=1}^n J_\somealg(m_j) \ar[rrrd, bend left=20,"(**)"] \ar[rdddd,bend right=40,swap,"(*)"] \ar[rdd,dashed, "\exists \, !"] \\ &&& \ar[d,"\text{composition in $\cat{B}$}"]\cat{B}(\isomealg^{\otimes n},\isomealg	)\otimes \bigotimes_{j=1}^n \cat{B}(\isomealg^{\otimes m_j},\isomealg	) \\ &	\ar[rr] \ar[dd] J_\somealg(m_1+\dots+m_n) &&  \cat{B}(\isomealg^{\otimes (m_1+\dots+m_n)},\isomealg)   \ar[dd, "\left(   \iota^{\otimes (m_1+\dots+m_n)}  \right)^*"]  \\ \\
			& 	k 	 \ar[rr]    && \cat{B}(\somealg^{\otimes (m_1+\dots+m_n)},\isomealg	) \ .  \\
		\end{tikzcd}
	\end{equation}
	\normalsize
	It is straightforward to see that the outer pentagon commutes. Now by the universal property of the pullback there is unique map $J_\somealg(n)\otimes \bigotimes_{j=1}^n J_\somealg(m_j)\to J_\somealg(m_1+\dots+m_n)$ making the entire diagram commute. We define this to be the needed operadic composition map. 
	The composition can be seen to be equivariant. Since it is induced by composition in $\cat{B}$, it is associative and unital with respect to the identity (which we defined as operadic unit).
	The operad structure on $J_\somealg$ is defined in such a way that statement~\ref{propJ2} holds by construction. 
	
	It remains to prove~\ref{propJ1}: First observe that 
	the maps $J_\somealg(n)\to k$ are $B_n$-equivariant by construction and are also compatible with composition. Therefore, we only need to show that $J_\somealg(n)\to k$ for fixed $n\ge 0$ is a trivial fibration.
	Indeed, the exactness of the monoidal product in $\cat{B}$
	ensures that $\iota^{\otimes n}:\somealg^{\otimes n}\to\isomealg^{\otimes n}$ is again an injective resolution, i.e.\  a trivial cofibration in the injective model structure on  complexes in $\cat{B}$.
	 Since $\isomealg$ is fibrant in this model structure, the precomposition with $\iota^{\otimes n}$ in \eqref{eqntrivialfibration} is a trivial fibration.  
	Now $J_\somealg(n)\to k$, as the pullback of a trivial fibration according to its definition in
	\eqref{eqncheckDdefined},
	is a trivial fibration as well. 
\end{proof}

The construction from Proposition~\ref{propJ} can be used for the construction of differential graded $E_2$-algebras. In order to see this, let us record the following two straightforward Lemmas.

It is well-known that  a symmetric lax monoidal functor preserves operadic algebras. The following Lemma is a braided version of this fact:

\begin{lemma}\label{lemmabop1}
	Let $F:\cat{B}\to\cat{B}'$ be an enriched braided lax monoidal functor between braided monoidal categories enriched over $\cat{M}$ and let $\cat{O}$ be a braided operad in $\cat{M}$.
	Then for any braided $\cat{O}$-algebra $A$ in $\cat{B}$, the image $F(A)$ naturally comes with the structure of a braided $\cat{O}$-algebra.
\end{lemma}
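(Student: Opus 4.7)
The plan is to mimic the well-known symmetric case, transporting $n$-ary operations along $F$ using its enriched functoriality and its lax monoidal structure, and to use the braided hypothesis on $F$ precisely to recover $B_n$-equivariance. Concretely, given the braided $\cat{O}$-algebra structure on $A$ as a braided operad map $\rho: \cat{O} \to \End_A$, I will construct a morphism $\varphi: \End_A \to \End_{F(A)}$ of braided operads in $\cat{M}$ and then declare the action on $F(A)$ to be $\varphi \circ \rho$.

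First, for each arity $n \ge 0$, the iterated lax coherence provides a map $\lambda_n: F(A)^{\otimes n} \to F(A^{\otimes n})$ in $\cat{B}'$, well-defined by the associativity and unit axioms of a lax monoidal functor. Combining the $\cat{M}$-enriched functoriality of $F$ with precomposition by $\lambda_n$, I obtain
\begin{align}
\varphi_n \,:\, [A^{\otimes n}, A] \,\ra{F}\, [F(A^{\otimes n}), F(A)] \,\ra{\lambda_n^*}\, [F(A)^{\otimes n}, F(A)]
\end{align}
in $\cat{M}$. Unitality of $\varphi = (\varphi_n)_{n \ge 0}$ is immediate from $F(\id_A) = \id_{F(A)}$, and compatibility with operadic composition unwinds to the coherence of the iterated $\lambda$'s under horizontal and vertical composition of operations, which is again the same diagram chase as in the symmetric case and uses only the associativity and unit axioms of a lax monoidal functor.

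The hard part, and the only place where the \emph{braided} rather than merely monoidal hypothesis on $F$ enters, will be checking $B_n$-equivariance of $\varphi_n$. The braid group acts on $\End_A(n)$ by precomposition with the automorphism of $A^{\otimes n}$ associated to a braid $\beta \in B_n$ through the braiding of $\cat{B}$, and on $\End_{F(A)}(n)$ analogously through the braiding of $\cat{B}'$. By definition, a braided lax monoidal functor is one whose lax structure intertwines the two braidings, i.e.\ $F(c_{X,Y}^{\cat{B}}) \circ \lambda_{X,Y} = \lambda_{Y,X} \circ c_{F(X), F(Y)}^{\cat{B}'}$. Iterating this identity through the coherence of $\lambda_n$ will give $F(\beta_A) \circ \lambda_n = \lambda_n \circ \beta_{F(A)}$ for every $\beta \in B_n$, where $\beta_A$ and $\beta_{F(A)}$ denote the braid automorphisms on the two tensor powers. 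Combined with functoriality of $F$ on composition, this is exactly the identity needed for $\varphi_n$ to commute with the $B_n$-actions on source and target.

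Finally, since $\rho: \cat{O} \to \End_A$ is a morphism of braided operads and $\varphi: \End_A \to \End_{F(A)}$ has just been shown to be one, their composite $\varphi \circ \rho: \cat{O} \to \End_{F(A)}$ is a braided operad map, endowing $F(A)$ with the desired structure of a braided $\cat{O}$-algebra.
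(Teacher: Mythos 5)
Your proposal is correct and follows essentially the same route as the paper: the structure maps are exactly the composites $[A^{\otimes n},A]\ra{F}[F(A^{\otimes n}),F(A)]\ra{\lambda_n^*}[F(A)^{\otimes n},F(A)]$ applied after the structure map of $A$, with the braided hypothesis on $F$ invoked for $B_n$-equivariance. Your repackaging via an operad morphism $\End_A\to\End_{F(A)}$ is just a slightly more explicit organization of the same argument, with the equivariance and composition checks spelled out in more detail than the paper's brief proof.
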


\begin{proof}
	The  structure maps that turn $F(A)$ into a braided $\cat{O}$-algebra are
	\begin{align} \cat{O}(n) \to [A^{\otimes n},A] \ra{F} \left[  F\left(   A^{\otimes n}\right) ,F(A) \right] \to \left[  \left( F\left(   A\right) \right)^{\otimes n} ,F(A) \right] \ , \quad n\ge 0 \ .  \end{align} 
	The first map is the structure map of $A$, the third map precomposes with the maps $\left( F(A)\right)^{\otimes n} \to F\left(   A^{\otimes n}\right)$ that are a part of the lax monoidal structure of $F$. These maps are $B_n$-equivariant because $F$ is braided. 
\end{proof}

\begin{lemma}\label{lemmabop2}
	Let $\cat{O}$ be a braided operad in $\cat{M}$ and $A$ a braided $\cat{O}$-algebra in a {\normalfont symmetric} monoidal category $\cat{B}$ enriched over $\cat{M}$. Then $A$ induces in a canonical way an algebra over the symmetrization $\catf{Sym}\, \cat{O}$ of $\cat{O}$. More precisely, the structure map $\cat{O}\to \End_A$ canonically factors through the unit $\cat{O}\to \catf{Res} \,\catf{Sym}\, \cat{O}$ of the adjunction $\catf{Sym}\dashv \catf{Res}$ from~\eqref{eqn:adjunctionoperads}. 
\end{lemma}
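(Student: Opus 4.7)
The plan is to deduce the statement directly from the universal property of the symmetrization adjunction, once we observe that in a symmetric monoidal category the endomorphism operad is in fact a symmetric operad. More precisely, I would argue in three short steps.

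First, I would observe that because $\cat{B}$ is symmetric, the braided endomorphism operad $\End_A$ of any object $A\in\cat{B}$ carries more structure than is apparent from its definition: for each $n\ge 0$, the $B_n$-action on $\End_A(n)=[A^{\otimes n},A]$ coming from the braiding of $\cat{B}$ factors through the canonical projection $B_n\twoheadrightarrow S_n$ (since the symmetry satisfies $c_{Y,X}\circ c_{X,Y}=\id$, so that every generator of the pure braid group $P_n$ acts by the identity on the $n$-fold tensor power). Thus $\End_A$ is naturally a symmetric operad in $\cat{M}$, and its braided operad structure is obtained by applying $\catf{Res}$ to this symmetric operad.

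Second, since $A$ is a braided $\cat{O}$-algebra, we have by assumption a morphism $\cat{O}\to \End_A=\catf{Res}\,\End_A$ of braided operads. Applying the adjunction $\catf{Sym}\dashv\catf{Res}$ from~\eqref{eqn:adjunctionoperads}, this morphism corresponds uniquely to a morphism $\catf{Sym}\,\cat{O}\to \End_A$ of symmetric operads, which by definition endows $A$ with the structure of an algebra over the symmetric operad $\catf{Sym}\,\cat{O}$. Moreover, the triangle identity of the adjunction presents the original braided operad map as the composition
\begin{align}
\cat{O}\ra{\eta} \catf{Res}\,\catf{Sym}\,\cat{O} \to \catf{Res}\,\End_A \ ,
\end{align}
where $\eta$ is the unit; this is exactly the asserted factorization.

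Third, I would remark that there is essentially no obstacle here: the only thing to verify is the factorization statement at the level of operads (not just arity-wise), but this is built into the adjunction. At the arity-wise level, $(\catf{Sym}\,\cat{O})(n)=\cat{O}(n)/P_n$, so the factorization $\cat{O}(n)\to \cat{O}(n)/P_n \to \End_A(n)$ is forced by the fact, established in the first step, that $P_n$ acts trivially on $\End_A(n)$. The compatibility with operadic composition, units and the residual $S_n=B_n/P_n$ action is then automatic from the description of $\catf{Sym}\,\cat{O}$ as an arity-wise quotient, so no additional calculation is needed. If anything had to be labelled the main step, it would be the observation in step one that the symmetry of $\cat{B}$ trivializes the pure braid action on $\End_A$; the rest is bookkeeping via the adjunction.
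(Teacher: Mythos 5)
Your proposal is correct and follows essentially the same route as the paper: the key observation in both is that the symmetry of $\cat{B}$ makes the pure braid group act trivially on $[A^{\otimes n},A]$, so the structure maps factor arity-wise through $(\catf{Sym}\,\cat{O})(n)=\cat{O}(n)/P_n$. Your extra packaging via the unit and universal property of the adjunction $\catf{Sym}\dashv\catf{Res}$ just makes explicit what the paper leaves implicit in ``this implies the assertion.''
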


\begin{proof}
	Pure braid group elements act trivially on $[A^{\otimes n},A]$ because $\cat{B}$ is symmetric. As a consequence,
	the structure maps of $A$ factor as
	\begin{equation}
		\begin{tikzcd}
			\ar[rr] \ar[rd] \cat{O}(n) &&  \text{$[A^{\otimes n},A]$} \ .      \\ 
			& \catf{Sym}\, \cat{O} (n) = \cat{O}(n)/P_n  \ar[ur,dashed]  \\
		\end{tikzcd}
	\end{equation}
	This implies the assertion. 
\end{proof}

\begin{proposition}\label{propE2onhom}
	Let $\cat{B}$ be a  braided finite tensor category.
	Then for any braided commutative algebra $\somealg \in \cat{B}$ 
	and any braided cocommutative coalgebra $\somecoalg \in \cat{B}$
	the derived morphism space $\cat{B}(\somecoalg,\isomealg)$ is naturally an $E_2$-algebra. 
\end{proposition}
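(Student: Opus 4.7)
The plan is to recast the question as an operadic one: promote $\isomealg$ to a braided $J_\somealg$-algebra via Proposition~\ref{propJ}, transport this structure along a braided lax monoidal functor into $\Ch$ built from $\somecoalg$, symmetrize via Lemma~\ref{lemmabop2}, and finally identify the resulting symmetrized operad with a chain-level $E_2$-operad.

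First I would exhibit the (enriched) hom functor $\cat{B}(\somecoalg,-)$, extended in the evident way to cochain complexes in $\cat{B}$, as a \emph{braided} lax monoidal functor to $\Ch$. Its lax monoidal structure map
\[
	\cat{B}(\somecoalg,X)\otimes \cat{B}(\somecoalg,Y)\longrightarrow \cat{B}(\somecoalg,X\otimes Y)
\]
sends $f\otimes g$ to the composite $(f\otimes g)\circ \delta_\somecoalg$, where $\delta_\somecoalg$ is the comultiplication of $\somecoalg$. The compatibility of this structure with the braiding on $\cat{B}$ and the symmetry on $\Ch$ reduces, via naturality of the braiding in $\cat{B}$, to exactly the braided cocommutativity relation $c_{\somecoalg,\somecoalg}\circ \delta_\somecoalg = \delta_\somecoalg$.

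Next, by Proposition~\ref{propJ}\ref{propJ2}, $\isomealg$ already carries the structure of a braided $J_\somealg$-algebra in $\cat{B}$. Applying Lemma~\ref{lemmabop1} to the functor of the previous paragraph then endows $\cat{B}(\somecoalg,\isomealg)$ with the structure of a braided $J_\somealg$-algebra in $\Ch$. Because $\Ch$ is symmetric monoidal, Lemma~\ref{lemmabop2} upgrades this further to an algebra structure over the symmetrization $\catf{Sym}\,J_\somealg$. It remains to identify $\catf{Sym}\,J_\somealg$ with a chain-level $E_2$-operad: By Proposition~\ref{propJ}\ref{propJ1}, $J_\somealg$ is acyclic, i.e.\ an operadic resolution of the braided commutative operad $k$; in particular, arity-wise $J_\somealg(n)\simeq k$ as chain complexes with trivial $B_n$-action. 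Taking pure braid group coinvariants, $(\catf{Sym}\,J_\somealg)(n)=J_\somealg(n)/P_n$ then computes $H_*(BP_n;k)$, and $BP_n$ is the standard $K(P_n,1)$-model for $E_2(n)$ together with its $S_n$-action, so $\catf{Sym}\,J_\somealg$ is indeed a chain-level model of the $E_2$-operad.

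The main obstacle is this last identification: the strict coinvariants $J_\somealg(n)/P_n$ compute the \emph{derived} coinvariants $J_\somealg(n)\otimes^{\mathbb{L}}_{k[P_n]} k$ only when the $P_n$-action on $J_\somealg(n)$ is sufficiently free. Should the action produced by the pullback~\eqref{eqncheckDdefined} fail to be $k[P_n]$-cofibrant on the nose, one has to first pass to a cofibrant replacement of $J_\somealg$ in the model category of braided operads — chosen so as to preserve the braided algebra structure on $\isomealg$ — before symmetrizing. Everything else is a formal consequence of the universal properties already in place.
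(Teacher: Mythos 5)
Your proposal is correct and follows essentially the same route as the paper: braided lax monoidality of $\cat{B}(\somecoalg,-)$ via the comultiplication and braided cocommutativity, Proposition~\ref{propJ} together with Lemmas~\ref{lemmabop1} and~\ref{lemmabop2}, and the identification of the symmetrization of an acyclic, arity-wise $k[B_n]$-projectively cofibrant braided operad with $C_*(E_2;k)$ (Fiedorowicz recognition). The cofibrancy subtlety you flag is exactly the point the paper handles, simply by choosing a trivial fibration $\check J_\somealg\to J_\somealg$ from an arity-wise projectively cofibrant braided operad at the outset and pulling the $J_\somealg$-algebra structure on $\isomealg$ back along it --- no special choice is needed to "preserve" the algebra structure, since restriction along any operad map is automatic.
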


\begin{proof}
	We denote by $\check J_\somealg$ a resolution of the braided operad $J_\somealg$ 
	 that is arity-wise a  projectively cofibrant $k[B_n]$-module
	 (a cofibrant object in the projective model structure on differential graded $k[B_n]$-modules); this is the braided analogue of a $\Sigma$-cofibrant symmetric operad. In other words, we pick a braided operad $\check J_\somealg$ with arity-wise projectively cofibrant braid group actions 
	and a trivial fibration $\check J_\somealg\to J_\somealg$.

	Since $J_\somealg$ and thus $\check J_\somealg$ is acyclic thanks to Proposition~\ref{propJ}~\ref{propJ1} and since $\check J_\somealg$ has a projectively cofibrant braid group action, we obtain
	\begin{align}  (\catf{Sym}\, \check J_\somealg)(n)=\left(  \check J_\somealg  (n)\right) / P_n = C_*(BP_n;k) \simeq C_*(E_2(n);k) \ ,
	\end{align} where $C_*(-;k)$ is the functor taking $k$-chains. 
 Consequently,	we obtain an equivalence 
	\begin{align}  \catf{Sym}\, \check J_\somealg\simeq C_*(E_2;k) \  \label{eqnsymcheckD}
	\end{align} of operads. In other words, $\catf{Sym}\, \check J_\somealg$ is a model for $E_2$. This is an instance of the Recognition Principle for $E_2$ \cite{fie96}.

	If we pull back the $J_\somealg$-action on $\isomealg$ from Proposition~\ref{propJ}~\ref{propJ2} along $\check J_\somealg\to J_\somealg$, we turn $\isomealg$ into  a braided $\check J_\somealg$-algebra. 
	
	The functor $\cat{B}(\somecoalg,-):\catf{Ch}(\cat{B})\to \Ch$ is 
	\begin{itemize}
		\item lax monoidal since $\somecoalg$ is a coalgebra,
		\item and also braided  since $\somecoalg$ is braided cocommutative. 
	\end{itemize}
	This implies by Lemma~\ref{lemmabop1} that $\cat{B}(\somecoalg,\isomealg)$ becomes a braided $\check J_\somealg$-algebra, which by Lemma~\ref{lemmabop2} induces a $\catf{Sym}\, \check J_\somealg$-algebra structure on $\cat{B}(\somecoalg,\isomealg)$ because $\Ch$ is symmetric. 
	Now \eqref{eqnsymcheckD} yields the assertion.
\end{proof}

We apply Proposition~\ref{propE2onhom} to the canonical coalgebra $\coalg\in Z(\cat{C})$ to obtain a source of $E_2$-algebras:

\begin{theorem}\label{thmE2derivedhom}
	Let $\mathbb{T} \in \cat{C}$ be an algebra in a finite tensor category $\cat{C}$
	together with a lift to a braided commutative algebra $\somealg \in Z(\cat{C})$ in the Drinfeld center.
	Then the multiplication of $\mathbb{T}$ and the half braiding of $\mathbb{T}$ induce the structure of an $E_2$-algebra on the space $\cat{C}(I,\mathbb{T}^\bullet)$ of homotopy invariants of $\mathbb{T}$. 
\end{theorem}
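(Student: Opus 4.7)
The plan is to reduce the statement directly to Proposition~\ref{propE2onhom}, applied inside the braided finite tensor category $\cat{B}=Z(\cat{C})$ with the braided commutative algebra $\somealg$ and the canonical braided cocommutative coalgebra $\coalg\in Z(\cat{C})$ from Lemma~\ref{lemmadmno}. This produces an $E_2$-algebra structure on the chain complex $Z(\cat{C})(\coalg,\somealg^\bullet)$, where $\somealg^\bullet$ denotes an injective resolution of $\somealg$ in $Z(\cat{C})$.

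The second step is to transport this $E_2$-structure to $\cat{C}(I,\mathbb{T}^\bullet)$ via the adjunction $L\dashv U$. Since $LI=\coalg$ and $U\somealg=\mathbb{T}$, this adjunction supplies a natural isomorphism
\begin{align}
\cat{C}(I,U\somealg^\bullet)\cong Z(\cat{C})(\coalg,\somealg^\bullet).
\end{align}
To identify the left-hand side with $\cat{C}(I,\mathbb{T}^\bullet)$, we need that $U$ sends an injective resolution of $\somealg$ in $Z(\cat{C})$ to an injective resolution of $\mathbb{T}$ in $\cat{C}$. Exactness of $U$ takes care of quasi-isomorphisms. Preservation of injective objects follows from the exactness of the left adjoint $L$, which is a consequence of~\eqref{eqnLandR}: the isomorphism $L\cong R(D^{-1}\otimes-)$ exhibits $L$ as a composite of the left exact functor $R$ with an equivalence, hence left exact, and $L$ is right exact as a left adjoint. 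Thus $U\somealg^\bullet$ is an injective resolution of $\mathbb{T}$, and we obtain a canonical equivalence $\cat{C}(I,\mathbb{T}^\bullet)\simeq Z(\cat{C})(\coalg,\somealg^\bullet)$ along which the $E_2$-structure transports.

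Next, I would unravel the origin of the operations in order to match the phrasing of the theorem. Inspection of Propositions~\ref{propJ} and~\ref{propE2onhom} shows that the $E_2$-structure on $Z(\cat{C})(\coalg,\somealg^\bullet)$ is assembled from two ingredients: the multiplication $\mu_\somealg$, which enters through the map~\eqref{eqnmapBn} defining the operad $J_\somealg$; and the braid group actions on hom spaces of $Z(\cat{C})$, which are controlled by the half braidings of $\somealg$ and $\coalg$. Under $U$ the first datum becomes the multiplication of $\mathbb{T}$ in $\cat{C}$, while the second datum is visible on $\mathbb{T}$ precisely through the half braiding that equips $\somealg$ with its lift to the Drinfeld center.

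The step I expect to be the main obstacle is not the application of Proposition~\ref{propE2onhom} itself, but the clean homological comparison between injective resolutions in $\cat{C}$ and in $Z(\cat{C})$: once it is established that $U$ sends injective resolutions of $\somealg$ to injective resolutions of $\mathbb{T}$, the theorem becomes essentially a translation of Proposition~\ref{propE2onhom} through the adjunction $L\dashv U$.
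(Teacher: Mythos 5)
Your proposal is correct and follows essentially the same route as the paper's proof: apply Proposition~\ref{propE2onhom} in $Z(\cat{C})$ with $\somealg$ and the braided cocommutative $\coalg=LI$, transport the $E_2$-structure along the adjunction $L\dashv U$, and check that $U$ carries an injective resolution of $\somealg$ to one of $\mathbb{T}$ via exactness of $U$ and the fact that the exact left adjoint $L$ forces $U$ to preserve injectives. The only cosmetic difference is that you deduce exactness of $L$ from the isomorphisms~\eqref{eqnLandR}, whereas the paper cites it directly from Shimizu; both justifications are valid.
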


\begin{proof}
	By assumption we have $\mathbb{T}=U\somealg$ as algebras with the forgetful functor $U:Z(\cat{C})\to \cat{C}$, where $\somealg$ is braided commutative. 
	Now let $\coalg$ be the canonical coalgebra in $Z(\cat{C})$, namely the image $LI$ of the unit $I\in\cat{C}$ under the oplax monoidal left adjoint $L:\cat{C}\to Z(\cat{C})$ to $U$. Since $\coalg$ is braided cocommutative by Lemma~\ref{lemmadmno}, we may apply Proposition~\ref{propE2onhom} and find that $Z(\cat{C})(\coalg,\isomealg)$ comes with an $E_2$-structure. 
	
	Using the adjunction $L\dashv U$, 
	we observe
	\begin{align} Z(\cat{C})(\coalg,\isomealg) = Z(\cat{C})(LI,\isomealg)\cong \cat{C}(I,U \isomealg) \ .  \end{align}
	Therefore, $\cat{C}(I,U \isomealg)$ inherits the $E_2$-structure.
	
	It remains to show that $U \isomealg$ is an injective resolution of $\mathbb{T}$: Since $U$ is exact, the map $\mathbb{T}=U\somealg \to U \isomealg$ is a monomorphism and an equivalence. Finally, $U \isomealg$ is also degree-wise injective because $U$ is a right adjoint whose left adjoint $L$ is exact by \cite[Corollary~4.9]{shimizuunimodular} and hence preserves injective objects.
\end{proof}

Let us demonstrate the use of Theorem~\ref{thmE2derivedhom}:
For any braided finite tensor category $\cat{C}$,
denote by $\bar{\cat{C}}$ the same finite tensor category, but equipped with the inverse braiding. The Deligne product $\bar{\cat{C}}\boxtimes {\cat{C}}$ is a braided finite tensor category again. There is now a canonical braided monoidal functor 
$
G:\bar{\cat{C}}\boxtimes\cat{C}\to Z(\cat{C}) 
$ \cite{eno-d}
that sends $X\boxtimes Y \in \bar{\cat{C}}\boxtimes\cat{C}$
to $X\otimes Y$ together with the half braiding whose component indexed by some $Z\in \cat{C}$ is given by
\begin{align}
X\otimes Y \otimes Z \ra{X\otimes c_{Y,Z}} X\otimes Z \otimes Y \ra{c_{X,Z}^{-1}\otimes Y} Z\otimes X\otimes Y\ .\end{align} 
This half braiding is sometimes called the \emph{field goal transform} or the \emph{dolphin half braiding}.
The structure maps that turn $G$ into a braided monoidal functor use the braiding of $Z(\cat{C})$. 
For any braided commutative algebra $B \in \bar{\cat{C}}\boxtimes\cat{C}$, the object $G(B)$ is naturally a braided commutative algebra 
in $Z(\cat{C})$ that lifts the algebra $B_\otimes :=UG(B)$ to the Drinfeld center (the notation $B_\otimes$ is explained by the fact that $B_\otimes$ by definition is obtained by applying the monoidal product to $B$).  As an immediate consequence of Theorem~\ref{thmE2derivedhom},
we obtain:

\begin{corollary}\label{corbraided}
Let $\cat{C}$ be a braided finite tensor category.
For any braided commutative algebra $B\in\bar{\cat{C}}\boxtimes\cat{C}$, denote by $B_\otimes$ the algebra in $\cat{C}$ obtained by applying the monoidal product functor to $B$. 
Then the homotopy invariants $\cat{C}(I,B_\otimes^\bullet)$ of $B_\otimes$ naturally form an $E_2$-algebra. 
\end{corollary}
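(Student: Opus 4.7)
The plan is to reduce the assertion to Theorem~\ref{thmE2derivedhom} via the canonical braided monoidal functor $G:\bar{\cat{C}}\boxtimes\cat{C}\to Z(\cat{C})$ introduced just before the statement. Concretely, I would set $\mathbb{T}:=B_\otimes$ and exhibit the lift $\somealg:=G(B)\in Z(\cat{C})$. Once this lift is shown to be braided commutative and to satisfy $U\somealg\cong \mathbb{T}$ as algebras in $\cat{C}$, Theorem~\ref{thmE2derivedhom} produces the desired $E_2$-structure on $\cat{C}(I,B_\otimes^\bullet)$ verbatim.

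The first step is to verify that $G(B)$ is a braided commutative algebra in $Z(\cat{C})$. Since $G$ is braided (strong) monoidal, it sends algebras to algebras, and intertwines the braiding of $\bar{\cat{C}}\boxtimes\cat{C}$ with that of $Z(\cat{C})$. From the braided commutativity condition $\mu_B\circ c^{\bar{\cat{C}}\boxtimes\cat{C}}_{B,B}=\mu_B$, one therefore deduces $\mu_{G(B)}\circ c^{Z(\cat{C})}_{G(B),G(B)}=\mu_{G(B)}$, which is the required braided commutativity in $Z(\cat{C})$.

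The second step is to identify $UG(B)$ with $B_\otimes$ as algebras in $\cat{C}$. By the definition of $G$ recalled above, $UG(X\boxtimes Y)=X\otimes Y$, so $UG(B)$ and $B_\otimes$ agree as objects; since $U$ is strong monoidal and the lax structure of $G$ is defined so as to be compatible with $U$, the induced algebra structures coincide. This is essentially a bookkeeping step; the only mildly subtle point, which I regard as the main thing to be careful about, is that the braiding entering the definition of \emph{braided commutative algebra in $\bar{\cat{C}}\boxtimes\cat{C}$} is the one using the inverse braiding on the first factor of the Deligne product, since this is precisely the choice that renders $G$ braided monoidal.

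With these two ingredients verified, Theorem~\ref{thmE2derivedhom} applies directly with $\mathbb{T}=B_\otimes$ and $\somealg=G(B)$, endowing $\cat{C}(I,B_\otimes^\bullet)$ with an $E_2$-structure induced by the multiplication of $B_\otimes$ together with the half braiding on $G(B)$ coming from the two braidings of $\cat{C}$. I do not anticipate a genuine obstacle; the corollary is really a packaging of Theorem~\ref{thmE2derivedhom} through the functor $G$.
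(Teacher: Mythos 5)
Your proposal is correct and follows the same route as the paper: the paper likewise obtains the corollary by noting that the braided monoidal functor $G:\bar{\cat{C}}\boxtimes\cat{C}\to Z(\cat{C})$ sends the braided commutative algebra $B$ to a braided commutative algebra $G(B)\in Z(\cat{C})$ lifting $B_\otimes=UG(B)$, and then invokes Theorem~\ref{thmE2derivedhom}. Your explicit verifications (preservation of braided commutativity under a braided monoidal functor, and the identification $UG(B)\cong B_\otimes$ as algebras) are exactly the bookkeeping the paper leaves implicit.
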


\begin{example}\label{exampledolphin}
If we consider the special case $B=\int^{X\in\cat{C}}X^\vee \boxtimes X$ with the structure of a braided commutative algebra from \cite[Proposition~2.3]{fss},
then $B_\otimes$ is the canonical coend $\mathbb{F}=\int^{X\in\cat{C}}X^\vee \otimes X$ with the underlying algebra structure given by Lyubashenko \cite[Section~2]{lyu}. 
The resulting braided commutative algebra in $Z(\cat{C})$ is described directly, without passing through an algebra in $\bar{\cat{C}}\boxtimes\cat{C}$, by Neuchl and Schauenburg in~\cite[Definition~4 \& Proposition~5]{neuchlschauenburg}.
A detailed description of how the braided commutative algebra structure on $\mathbb{F}$ is obtained by applying $\otimes$ to $B=\int^{X\in\cat{C}}X^\vee \boxtimes X$ is given in 
\cite[Section~2]{cardycase}.
 If $\cat{C}$ is given by  finite-dimensional modules over a finite-dimensional quasi-triangular Hopf algebra, then $\mathbb{F}$ is the dual of the Hopf algebra with coadjoint action (we discuss the dual version of this statement in Example~\ref{exquantumgroups}). 
The resulting $E_2$-algebra $\cat{C}(I,\mathbb{F}^\bullet)$
is needed as an important technical ingredient for proof of the differential graded Verlinde algebra \cite{vd}. It appears there under the name \emph{dolphin algebra} (because the the $E_2$-algebra structure comes from the dolphin half braiding). 
\end{example}

The construction of Theorem~\ref{thmE2derivedhom}
is natural with respect to the input datum
in the following sense:

\begin{proposition}[Naturality in the braided algebra]\label{propnaturality}
	Let $\cat{C}$ be a finite tensor category with algebras $\mathbb{T}$ and $\mathbb{U}$ in $\cat{C}$ with lifts 
	$\somealg$ and $\catf{U} \in Z(\cat{C})$ to braided commutative algebras.
	Then any algebra map $\varphi: \mathbb{T}\to \mathbb{U}$, which has the property to induce a map $\somealg \to \catf{U}$ in the Drinfeld center, gives rise to a map
	\begin{align} \varphi^\bullet : \cat{C}(I,\mathbb{T}^\bullet )\to \cat{C}(I,\mathbb{U}^\bullet)
	\end{align}
	of $E_2$-algebras.
\end{proposition}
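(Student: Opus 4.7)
The strategy is to apply the construction of Proposition~\ref{propJ} to the arrow category $\operatorname{Arr}(\cat{B})$ of $\cat{B}:=Z(\cat{C})$, which is again braided monoidal with componentwise tensor product, has exact tensor product and enough injectives, and whose source and target functors to $\cat{B}$ are braided strong monoidal. The key observation is that the algebra map $\varphi:\somealg\to\catf{U}$ is itself a braided commutative algebra in $\operatorname{Arr}(\cat{B})$, because both $\somealg$ and $\catf{U}$ are braided commutative and $\varphi$ is an algebra map.

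First, I would take an injective resolution of $\varphi$ in the injective model structure on chain complexes in $\operatorname{Arr}(\cat{B})$. Concretely, this is a commutative square
\[
\begin{tikzcd}
\somealg \arrow[r, "\iota_\somealg"] \arrow[d, "\varphi"'] & \isomealg \arrow[d, "\tilde\varphi"] \\
\catf{U} \arrow[r, "\iota_\catf{U}"'] & \catf{U}^\bullet
\end{tikzcd}
\]
whose rows are injective resolutions in $\cat{B}$. Next, I would repeat the construction of Proposition~\ref{propJ} verbatim inside $\operatorname{Arr}(\cat{B})$; its proof uses only exactness of the tensor product and the existence of an injective model structure. This produces an acyclic braided operad $J_\varphi$ in $\Ch$ and endows $\tilde\varphi$ with the structure of a braided $J_\varphi$-algebra in $\operatorname{Arr}(\cat{B})$. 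Applying source and target (which are braided strong monoidal) then yields braided operad maps $J_\varphi\to J_\somealg$ and $J_\varphi\to J_\catf{U}$ under which the $J_\varphi$-action on $\tilde\varphi$ restricts to the $J_\somealg$-action on $\isomealg$ and the $J_\catf{U}$-action on $\catf{U}^\bullet$ of Proposition~\ref{propJ}~\ref{propJ2}. In particular, $\tilde\varphi$ becomes a morphism of braided $J_\varphi$-algebras in $\cat{B}$.

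Mimicking the proofs of Proposition~\ref{propE2onhom} and Theorem~\ref{thmE2derivedhom}, I would then choose an arity-wise projectively cofibrant resolution $\check J_\varphi\to J_\varphi$ that comes equipped with trivial fibrations onto compatible resolutions $\check J_\somealg$ and $\check J_\catf{U}$ of $J_\somealg$ and $J_\catf{U}$; such a coordinated lift exists by the usual lifting properties. Symmetrization yields a common $E_2$-model acting on both $\isomealg$ and $\catf{U}^\bullet$ and intertwined by $\tilde\varphi$. Applying the braided lax monoidal functor $\cat{B}(\coalg,-)$, using that $\coalg$ is braided cocommutative by Lemma~\ref{lemmadmno} and invoking Lemmas~\ref{lemmabop1} and~\ref{lemmabop2}, then shows that $\cat{B}(\coalg,\tilde\varphi)$ is a map of $E_2$-algebras. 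Translating along the adjunction isomorphism $\cat{B}(\coalg,-)\cong \cat{C}(I,U(-))$ identifies this map with $\varphi^\bullet$ and delivers the conclusion.

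The main technical subtlety will be the coordinated choice of cofibrant replacements $\check J_\varphi,\check J_\somealg,\check J_\catf{U}$ so that the common $E_2$-structure produced by $\check J_\varphi$ coincides with the individual $E_2$-structures produced by $\check J_\somealg$ and $\check J_\catf{U}$ for Theorem~\ref{thmE2derivedhom}. This should however be essentially formal: any two coherent choices yield canonically equivalent $E_2$-structures on the same underlying complexes, so the resulting map is a well-defined map of $E_2$-algebras independent of the choice.
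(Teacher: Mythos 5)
Your proposal is correct in substance, but it takes a genuinely different route from the paper. The paper stays inside $Z(\cat{C})$: for each arity it forms an auxiliary acyclic complex $K_\varphi(n)$ (a pullback built from $Z(\cat{C})(\isomealg^{\otimes n},\catf{U}^\bullet)$ and the map selecting $\iota_\catf{U}\circ\varphi\circ\mu_\somealg^n$), shows that the induced maps $\varphi_*:J_\somealg(n)\to K_\varphi(n)$ and $\varphi^*:J_\catf{U}(n)\to K_\varphi(n)$ are equivalences, and then lets the homotopy pullback operad $J_\somealg\times_\varphi J_\catf{U}$ act on both $\isomealg$ and $\catf{U}^\bullet$, so that $\varphi^\bullet$ is an algebra map only up to coherent homotopy, before applying $Z(\cat{C})(\coalg,-)$ as in Theorem~\ref{thmE2derivedhom}. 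You instead encode $\varphi$ as a braided commutative algebra in the arrow category $\operatorname{Arr}(Z(\cat{C}))$ and rerun the construction of Proposition~\ref{propJ} there; this buys a single acyclic braided operad $J_\varphi$ whose action intertwines $\isomealg$ and $\catf{U}^\bullet$ strictly (the relevant squares commute on the nose, with no coherent-homotopy caveat), and your operad maps $J_\varphi\to J_\somealg$, $J_\varphi\to J_\catf{U}$ do exist by the universal property of the defining pullbacks, since source and target are exact, enriched, braided strict monoidal and compatible with the reference maps from $k$. The price is that Proposition~\ref{propJ} does not literally apply: $\operatorname{Arr}(Z(\cat{C}))$ is not a braided finite tensor category (its unit is not simple, and only componentwise isomorphisms are rigid), so you must re-verify the actual ingredients of its proof — exactness of the componentwise tensor product, enough injectives (injectives are split epimorphisms between injectives of $Z(\cat{C})$, so the components of your resolution of $\varphi$ are automatically injective resolutions of $\somealg$ and $\catf{U}$), stability of injectives under tensor, and fibrancy of bounded-below complexes of injectives; these all hold, but should be stated. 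Finally, both arguments end with the same bookkeeping: your coordinated choice of $\check J_\varphi$ versus $\check J_\somealg,\check J_\catf{U}$ plays exactly the role of the paper's projections out of $J_\somealg\times_\varphi J_\catf{U}$, and in either case one invokes that maps of acyclic, arity-wise cofibrant braided operads induce the same $E_2$-structure up to contractible choice, which is the level of rigor the paper itself adopts.
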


\begin{proof}
	The assumption says exactly that $\varphi:\somealg \to \catf{U}$ gives us a map of algebras in $Z(\cat{C})$.
	We can extend $\varphi$ to a map $\varphi^\bullet : \isomealg \to \catf{U}^\bullet$ between injective resolutions such that 
	$\varphi^\bullet \circ \iota_{\somealg} = \iota_{\catf{U}} \circ \varphi$ for the coaugmentations $\iota_{\somealg}:\somealg\to\isomealg$ and $\iota_{\catf{U}} : \catf{U}\to\catf{U}^\bullet$.
	The idea is to see $\varphi^\bullet : \isomealg \to \catf{U}^\bullet$ as map  of braided algebras over an acyclic braided operad and then to apply the functor $Z(\cat{C})(\coalg,-)$ as in the proof of Theorem~\ref{thmE2derivedhom}. This will give us a map of $E_2$-algebras.
	
	The non-obvious point is \emph{how} to see $\varphi^\bullet : \isomealg \to \catf{U}^\bullet$ as map of braided algebras over an acyclic braided operad because, according to Proposition~\ref{propJ},
	$\isomealg$ is a $J_\somealg$-algebra while $\catf{U}^\bullet$ is a $J_\catf{U}$-algebra. Both algebras are defined over different operads. Moreover, $\varphi$ does \emph{not} directly induce a map $J_\somealg\to J_\catf{U}$.

	Instead,
	we use the following construction:
	First we define  for $n\ge 0$ the pullback of complexes with $B_n$-action
	\begin{equation}
		\begin{tikzcd}
			\ar[rrrrrr] \ar[dd] K_\varphi(n) &&&& && Z(\cat{C})(\isomealg^{\otimes n},\catf{U}^\bullet)   \ar[dd, "  (\iota_\somealg^{\otimes n})^*     "]  \\ \\
			k	 \ar[rrrrrr,swap," 1 \mapsto \left( \somealg^{\otimes} \xrightarrow{\mu^n_\somealg} \somealg  \xrightarrow{\varphi} \catf{U} \xrightarrow{\iota_\catf{U}} \catf{U}^\bullet \right)  "]    &&&&&& Z(\cat{C})(\somealg^{\otimes n},\catf{U}^\bullet)   \ .  \\
		\end{tikzcd}\label{pullbackpullback}
	\end{equation}
	As in the proof of Proposition~\ref{propJ}, this is also a homotopy pullback.
	Since the right vertical map is a trivial fibration, so is $K_\varphi(n)\to k$. 
	The map
	\begin{align}
		J_\somealg(n) \to Z(\cat{C})(\isomealg^{\otimes n},\isomealg ) \ra{\varphi^\bullet_*}  Z(\cat{C})(\isomealg^{\otimes n},\catf{U}^\bullet) 
	\end{align}
	induces by construction a map $J_\somealg(n)\to K_\varphi(n)$ that by abuse of notation we just write as $\varphi_*$.
	Similarly,
	the map
	\begin{align}
		J_\catf{U}(n) \to Z(\cat{C})({\catf{U}^\bullet}^{\otimes n},\catf{U}^\bullet) \ra{({\varphi^\bullet}^{\otimes n})^*}  Z(\cat{C})(\isomealg^{\otimes n},\catf{U}^\bullet) 
	\end{align}
	induces a map $\varphi^* : J_\catf{U}(n) \to K_\varphi(n)$. 
	This uses
	\begin{align}
		j \circ {\varphi^\bullet}^{\otimes n} \circ \iota_\somealg^{\otimes n} = j \circ  \iota_{ \catf{U}}^{\otimes n} \circ \varphi^{\otimes n} = \iota_{ \catf{U}} \circ \mu_\catf{U}^n \circ \varphi^{\otimes n} =\iota_{ \catf{U}} \circ  \varphi \circ \mu_\somealg^n  \quad \text{for}\quad j \in 	J_\catf{U}(n) \ , 
	\end{align}
	where $\varphi$ being a map of algebras enters in the last step.
	The maps $\varphi_* : J_\somealg(n)\to K_\varphi(n)$ and $\varphi^* : J_\catf{U}(n) \to K_\varphi(n)$ are not zero because $\varphi$ must preserve unit and hence is not zero (we assume that the algebras $\mathbb{T}$ and $\mathbb{U}$ are not zero). Since $J_\somealg(n)$, $J_\catf{U}(n)$ and $ K_\varphi(n)$
	are acyclic, $\varphi_*$ and $\varphi^*$ are equivalences. 
	As a consequence,
	the homotopy pullback 
	\begin{equation}
		\begin{tikzcd}
			\ar[rr] \ar[dd] (J_\somealg \times _\varphi J_\catf{U})(n) &&  J_\somealg(n)   \ar[dd, "\varphi_*"]  \\ \\
			J_\catf{U}(n)	 \ar[rr,swap,"\varphi^*"]    && K_\varphi(n)   \\
		\end{tikzcd}\label{pullbackpullback}
	\end{equation}
	is also acyclic.
	The complexes $(J_\somealg \times _\varphi J_\catf{U})(n)$ form an acyclic braided operad $J_\somealg \times _\varphi J_\catf{U}$.
	By virtue of the projections,
	$J_\somealg \times _\varphi J_\catf{U}\to J_\somealg$ 
	and $J_\somealg \times _\varphi J_\catf{U}\to J_\catf{U}$,
	$\isomealg$ and $\catf{U}^\bullet$ become braided $J_\somealg \times _\varphi J_\catf{U}$-algebras such that $\varphi^\bullet : \isomealg \to \catf{U}^\bullet$ becomes a $J_\somealg \times _\varphi J_\catf{U}$-algebra map (up to coherent homotopy depending on the model for \eqref{pullbackpullback}). 
\end{proof}

\section{Resolving the canonical algebra\label{firstsection}}
We intend to eventually apply Theorem~\ref{thmE2derivedhom} to the canonical algebra $\mathbb{A}$ and to exploit the relation to Hochschild cochains.
As a preparation for the very technical next section,
we present in this section an injective resolution of $\mathbb{A}$ which conveniently allows us to do that.

For any finite tensor category $\cat{C}$, $X\in\cat{C}$ and a finite-dimensional vector space $W$,
recall that we denote by $X^W=W^*\otimes X$  the powering of  $X$ by $W$.
We may now define the objects 
\begin{align}
	\prod_{X_0,\dots,X_n \in \Proj\cat{C}}    (X_0\otimes X_n^\vee)^{\cat{C}(X_n,X_{n-1})\otimes\dots\otimes\cat{C}(X_1,X_0)}        \label{eqnterminininj}
\end{align}
that we  organize into a cosimplicial object
\begin{equation}\label{eqnresA}
	\begin{tikzcd}
		\displaystyle
		\prod_{X_0\in \Proj \cat{C}} X_0\otimes X_0^\vee \ar[r, shift left=2] \ar[r, shift right=2]
		& \displaystyle \prod_{X_0,X_1 \in \Proj \cat{C}}       (X_0\otimes X_1^\vee)  ^{  \cat{C}(X_1,X_0)  }  \ar[r, shift left=4] \ar[r, shift right=4]\ar[r]  \  
		\ar[l] & \ar[l,shift left=2] \ar[l,shift right = 2]\displaystyle \dots \ .   \\
	\end{tikzcd}
\end{equation}
which  a priori lives in a completion of $\cat{C}$
by infinite products.
We denote by $\frint_{X\in\Proj \cat{C}} X \otimes X^\vee$ the differential graded object in $\cat{C}$ which is obtained as the totalization of the restriction to any finite collection of projective objects in $\cat{C}$ that contains a projective generator.

\begin{lemma}\label{lemmaresA}
	Let $\cat{C}$ be a finite tensor category.
	The differential graded object 
	$\frint_{X\in\Proj \cat{C}} X \otimes X^\vee$ in $\cat{C}$ is well-defined up to equivalence and an injective resolution of the canonical end $\mathbb{A}=\int_X X\otimes X^\vee$.
\end{lemma}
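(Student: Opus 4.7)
The plan is to verify three things in turn: (i) each cosimplicial term is injective, (ii) the coaugmentation $\mathbb{A}\to\frint_{X\in\Proj\cat{C}}X\otimes X^\vee$ is an equivalence, and (iii) the totalization is independent, up to equivalence, of the finite collection of projectives used (provided it contains a projective generator).

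For (i), I would use that in a finite tensor category $\Proj\cat{C}=\catf{Inj}\,\cat{C}$ (a standard fact from \cite{egno}) and that the class of projective objects is closed under tensor product with (left or right) duals of projectives thanks to rigidity together with the exactness of $\otimes$. Consequently, for $X_0,\dots,X_n\in\Proj\cat{C}$ the object $X_0\otimes X_n^\vee$ is injective, and powering by the finite-dimensional vector space $\cat{C}(X_n,X_{n-1})\otimes\dots\otimes\cat{C}(X_1,X_0)$ is a finite direct sum, which preserves injectivity; finally, the finite product in \eqref{eqnterminininj} (after restricting to a finite collection) is again injective. So the totalization is degree-wise injective, as required for an injective resolution.

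For (ii) — which is the heart of the matter — I would recognize \eqref{eqnresA} as the Bousfield–Kan cosimplicial replacement/cobar construction computing the (homotopy) end of the bifunctor $F(X,Y)=Y\otimes X^\vee$ on $\Proj\cat{C}$. In degree $0$ the equalizer of the two coface maps is exactly the defining universal property of $\mathbb{A}=\int_{X\in\Proj\cat{C}}X\otimes X^\vee$, and one has a canonical coaugmentation $\mathbb{A}\to\prod_{X_0}X_0\otimes X_0^\vee$ compatible with the cosimplicial structure. To show the induced map from $\mathbb{A}$ into the totalization is a quasi-isomorphism, I would restrict to the full subcategory on a single projective generator $G\in\Proj\cat{C}$ (justified by step (iii) below and the standard fact that the end of such a bifunctor on $\Proj\cat{C}$ is computed by the endomorphism algebra of a projective generator). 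On this restriction, the cosimplicial object is the standard cobar construction associated with the finite-dimensional algebra $A=\cat{C}(G,G)$ acting on $G\otimes G^\vee$, and it admits the usual extra codegeneracy producing a contracting homotopy for the augmented cosimplicial object. This exhibits $\mathbb{A}$ as quasi-isomorphic to the totalization.

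For (iii), any two finite collections $\cat{F},\cat{F}'\subset\Proj\cat{C}$ each containing a projective generator are both contained in $\cat{F}\cup\cat{F}'$, and the resulting restriction maps between totalizations are equivalences because, as just argued in (ii), all three compute the end $\mathbb{A}$. The main obstacle is the quasi-isomorphism claim in (ii): writing out the contracting homotopy rigorously requires unpacking the rigid/coend structure, identifying the cosimplicial object on $\{G\}$ as a genuine cobar construction in the monadic sense, and checking that the extra degeneracy is compatible with the product indexing — but this is formally parallel to the classical bar resolution of modules over an algebra, translated through the duality $(-)^V=V^*\otimes(-)$ built into the powering.
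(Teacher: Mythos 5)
Your steps (i) and (iii) are essentially fine in isolation, but the heart of the argument, step (ii), has a genuine gap. After restricting to a single projective generator $G$ with $A:=\cat{C}(G,G)$, the cosimplicial object is \emph{not} a one-sided (co)bar resolution: it is the internal Hochschild cochain complex of $A$ with coefficients in the $A$-bimodule $G\otimes G^\vee$ (left action through $G$, right action through $G^\vee$), augmented by its invariants, which is the end $\mathbb{A}$. For such complexes there is no ``usual extra codegeneracy'': a formal contracting homotopy of the kind you invoke would prove $\Ext^{>0}_{A\otimes A^{\op}}(A,M)=0$ for \emph{every} bimodule $M$, i.e.\ that higher Hochschild cohomology of any finite-dimensional algebra vanishes, which is false (and would trivialize the rest of the paper). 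An extra codegeneracy exists only for coefficients that are coinduced/relatively injective from one side; that $G\otimes G^\vee$ has this property is exactly where nontrivial input enters, namely that in a finite tensor category projectives coincide with injectives (equivalently, a Shapiro-type reduction using that $G$ is also injective, so that the relevant internal Hochschild cohomology with these ``outer'' coefficients collapses to degree $0$). You use $\Proj\cat{C}=\catf{Inj}\,\cat{C}$ only for termwise injectivity in (i), but it is equally essential for exactness, and your sketch does not supply it. For comparison, the paper's proof is a short dualization argument: it applies ${}^\vee(-)$ to the projective resolution $\flint^{X\in\Proj\cat{C}}X^\vee\otimes X$ of $\mathbb{F}$ established in \cite[Corollary~4.3]{dva} and uses ${}^\vee\mathbb{F}\cong\mathbb{A}$; the nonformal content of your step (ii) is precisely what that cited result carries.

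There is also a circularity between your steps (ii) and (iii): you reduce (ii) to the one-generator case ``justified by step (iii)'', while (iii) is justified ``because, as just argued in (ii), all three compute the end''. To break this you need an independent argument that restriction along the inclusion of the generator into a larger finite collection of projectives is an equivalence of totalizations --- i.e.\ a Morita-invariance/Agreement-Principle statement for the object-valued homotopy end (the vector-space-valued version of McCarthy--Keller is what the paper invokes in the proof of Proposition~\ref{propcocomplex}) --- or else prove (ii) directly for every admissible finite collection. Finally, a minor point: you should also record why the end over all of $\cat{C}$ agrees with the end over the chosen finite subcategory of projectives, since the lemma identifies $H^0$ with $\mathbb{A}=\int_{X\in\cat{C}}X\otimes X^\vee$.
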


\begin{proof}
	We apply the duality functor ${^\vee-}$ to the projective resolution $\flint^{X\in\Proj\cat{C}}X^\vee \otimes X$ of $\mathbb{F}$ given in \cite[Corollary~4.3]{dva} and observe that, after a substitution of dummy variables, we obtain~\eqref{eqnresA}. 
	Since $^\vee\mathbb{F} \cong \mathbb{A}$, we now obtain the well-definedness of $\frint_{X\in\Proj \cat{C}} X \otimes X^\vee$ and the fact that it is an injective resolution of $\mathbb{A}$ (all of this crucially uses that in a finite tensor category the projective objects are precisely the injective ones).
\end{proof}

Let $X_0,\dots,X_{p+q}$ be a family of projective objects in $\cat{C}$ from the finite collection of projective objects
used to define $\frint_{X\in\Proj \cat{C}} X \otimes X^\vee$.
The evaluation $d_{X_p}: X_p^\vee\otimes X_p\to I$ 
of $X_p$ 
induces a map

\small
\begin{align} \label{eqnauxmapmult}\begin{array}{c} \left(  X_0 \otimes X_p^\vee \right)^{\cat{C}(X_1,X_0)\otimes\dots\otimes \cat{C}(X_{p},X_{p-1})} \\ \otimes \left(  X_p \otimes X_{p+q}^\vee \right)^{\cat{C}(X_p,X_{p+1})  \otimes\dots\otimes\cat{C}(X_{p+q-1},X_{p+q})}\end{array} \ \  \ra{ \ \ \ } \ \   \left(  X_0 \otimes X_{p+q}^\vee \right) ^{\cat{C}(X_1,X_0)\otimes\dots\otimes \cat{C}(X_{p+q},X_{p+q-1})}  \ . \end{align} \normalsize We may now define a map \begin{align}\label{eqndefgamma} \gamma^\bullet_{p+q} : \left( \frint_{X\in\Proj \cat{C}} X \otimes X^\vee \right)^p
	\otimes 
	\left( \frint_{X\in\Proj \cat{C}} X \otimes X^\vee \right)^q 
	\to 
	\left( \frint_{X\in\Proj \cat{C}} X \otimes X^\vee \right)^{p+q}  \end{align} as follows:
By the universal property of the product it suffices to give the component for any family
$X_0,\dots,X_{p+q}$ of projective objects in $\cat{C}$. 
We define this component to be the map that projects to the component of $X_0,\dots,X_p$ for the first factor and to $X_p,\dots,X_{p+q}$ for the second factor and applies the map~\eqref{eqnauxmapmult}. 
A direct computation shows that~\eqref{eqndefgamma}
yields a chain map and an associative and unital multiplication $\gamma^\bullet$
on $\frint_{X\in\Proj \cat{C}} X \otimes X^\vee$.
This is a model for a lift of the algebra structure $\gamma : \mathbb{A}\otimes\mathbb{A}\to\mathbb{A}$ from \eqref{eqngamma}
to the injective resolution as one can directly verify:

\begin{lemma}\label{lemmamodelforresA}
	For any finite tensor category $\cat{C}$, the coaugmentation $\mathbb{A}\to \frint_{X\in\Proj \cat{C}} X \otimes X^\vee$ is an equivalence \begin{align}\bspace{\mathbb{A}}{\gamma}\ra{\simeq} \bspace{\frint_{X\in\Proj \cat{C}} X \otimes X^\vee}{\gamma^\bullet}    \label{eqngammacoaug}    \end{align}
	of differential graded algebras.
\end{lemma}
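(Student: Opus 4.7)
The equivalence part is already handled by Lemma~\ref{lemmaresA}; what remains is to justify the assertion that $\gamma^\bullet$ is an associative, unital chain-level multiplication, and then to check that the coaugmentation intertwines $\gamma$ and $\gamma^\bullet$. My plan is to reduce everything to componentwise verifications indexed by families $X_0,\dots,X_{p+q}\in\Proj\cat{C}$, using the universal properties of products and ends so that nothing has to be said about the full infinite product.

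For the chain-map property of $\gamma^\bullet$ I would unfold the cosimplicial coboundary of \eqref{eqnresA}: its coface maps insert identities into adjacent hom-space factors and compose consecutive ones, and these operations visibly commute with the single evaluation $d_{X_p}:X_p^\vee\otimes X_p\to I$ that implements \eqref{eqnauxmapmult}. Associativity of $\gamma^\bullet$ then says that for a family $X_0,\dots,X_{p+q+r}$ the two bracketings of a triple product both apply the evaluations $d_{X_p}$ and $d_{X_{p+q}}$, which act on disjoint tensor factors and therefore commute. Unitality follows because the unit $I\to\frint_{X\in\Proj\cat{C}} X\otimes X^\vee$ lands in the $X_0$-component via the coevaluation $b_{X_0}:I\to X_0\otimes X_0^\vee$, after which one of the standard zigzag identities for left duality yields both left- and right-unitality.

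For compatibility with the coaugmentation $\iota_\mathbb{A}:\mathbb{A}\to \frint_{X\in\Proj\cat{C}} X\otimes X^\vee$, note that in cohomological degree zero the resolution is $\prod_{X_0\in\Proj\cat{C}} X_0\otimes X_0^\vee$ and $\iota_\mathbb{A}$ is the product of the structure maps of the end. By construction, $\gamma^\bullet$ in bidegree $(0,0)$ is induced by \eqref{eqnauxmapmult} with $p=q=0$, i.e.\ by exactly the evaluation $d_{X_0}$ that appears in the defining formula \eqref{eqngamma} for $\gamma$. Hence the two maps $\mathbb{A}\otimes\mathbb{A}\to\bigl(\frint_{X\in\Proj\cat{C}} X\otimes X^\vee\bigr)^0$ given respectively by first multiplying via $\gamma$ and then coaugmenting, and by first coaugmenting each factor and then applying $\gamma^\bullet_0$, agree componentwise in each $X_0$; the universal property of the product finishes the argument.

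The only real obstacle is the notational book-keeping of the many hom-space indices in \eqref{eqnterminininj} and \eqref{eqnauxmapmult}. There is no conceptual difficulty, since both the cosimplicial structure of the resolution and the multiplication $\gamma^\bullet$ are assembled out of the functoriality of $X\mapsto X\otimes X^\vee$ and the duality pairings, and the relevant diagrams then commute for formal reasons.
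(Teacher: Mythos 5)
Your proposal is correct and is essentially the paper's own argument: the paper offers no written proof of this lemma beyond asserting that the chain-map property, associativity, unitality and compatibility with the coaugmentation follow by direct componentwise verification, which is exactly what you carry out (with the quasi-isomorphism supplied by Lemma~\ref{lemmaresA}). One point to tighten when you write out the Leibniz rule: the cofaces of \eqref{eqnresA} do not insert identities (that is the codegeneracies) but compose adjacent hom-factors or act on the outer legs $X_0$ and $X_{p+q}^\vee$, and the verification is not a term-by-term commutation --- the two junction terms, where the last coface of $\varphi$ acts on its output leg $X_p^\vee$ and the zeroth coface of $\psi$ acts on its output leg $X_p$, cancel against each other precisely by naturality of the evaluation $d_{X}$, while all remaining terms match up as you describe.
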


Let us now discuss the relation to Hochschild cochains:
For a  linear category $\cat{A}$ over $k$ (to be thought of as algebra with several objects), the Hochschild cochain complex $\rint_{a\in \cat{A}} \cat{A}(a,a)$ 
is the homotopy end over the endomorphism spaces of objects in $\cat{A}$.
When spelled out, it is
the cochain complex of vector spaces which in cohomological degree $n\ge 0$ is given by
\begin{align}\label{eqnhochschildcomplex}
	\left(\rint_{a\in \cat{A}} \cat{A}(a,a) \right)^n =\left\{ \begin{array}{cl} \prod_{a_0\in\cat{A}} \cat{A}(a_0,a_0) & \text{for $n=0$} \ , \\ \prod_{a_0,\dots,a_n\in\cat{A}} \Hom_k     \left(    \cat{A}(a_1,a_0) \otimes \dots \otimes \cat{A}(a_n,a_{n-1}),\cat{A}(a_n,a_0)       \right) & \text{for $n\ge 1$} \ .\end{array} \right.
\end{align}
The composition in $\cat{A}$ gives us the differentials.
One 
may define the \emph{cup product} $\smile$ on the Hochschild cochain complex as follows:
Let a $p$-cochain $\varphi$ and a $q$-cochain $\psi$ be given, and let
$(a_0,\dots,a_{p+q})$ be a $p+q$-tuple of objects in $\cat{A}$.
The $(a_0,\dots,a_{p+q})$-component $(	\varphi \smile \psi)_{a_0,\dots,a_{p+q}}$ of the cochain $\varphi \smile \psi$
with degree $p+q$ is given by
\begin{align}
	(	\varphi \smile \psi)_{a_0,\dots,a_{p+q}} := \varphi_{a_0,\dots,a_p} \circ_{a_p} \psi_{a_{p},\dots,a_{p+q}} \ , 
\end{align}
where $\circ_{a_p}$ denotes the composition over $a_p$ in $\cat{A}$. 
In cohomology, the cup product yields a graded commutative product.
By work of Gerstenhaber \cite{gerstenhaber} the Hochschild cohomology comes also with a degree one bracket, a structure that today is called \emph{Gerstenhaber algebra} (we will review the definition of the Gerstenhaber bracket once we need it).

For a finite tensor category $\cat{C}$, we apply this definition to $\Proj\cat{C}$ and call 
$\rint_{X\in\Proj \cat{C}}    \cat{C}(X,X)$
the \emph{Hochschild cochain complex of $\cat{C}$}. 
The fact that the end runs over the projective objects will always be implicit.

\begin{proposition}\label{propcocomplex}	
	For any finite tensor category $\cat{C}$, there is a canonical equivalence
	\begin{align} \bspace {\rint_{X\in\Proj \cat{C}}    \cat{C}(X,X)      }{\smile} \ra{\simeq}  \bspace{\cat{C}\left(I,\frint_{X\in\Proj \cat{C}} X\otimes X^\vee \right)}{\gamma^\bullet} 
	\end{align}
	of differential graded algebras.
\end{proposition}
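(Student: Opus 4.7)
The plan is to construct the equivalence degree by degree and then check that it intertwines both the differentials and the multiplications. First, I apply $\cat{C}(I,-)$ to each term of the cosimplicial object~\eqref{eqnresA}, and use the adjunction $\cat{C}(X\otimes Y,Z)\cong \cat{C}(X,Z\otimes Y^\vee)$ (with $X=I$) in the form $\cat{C}(I,X_0\otimes X_n^\vee)\cong\cat{C}(X_n,X_0)$ together with the powering formula $\cat{C}(I,X^W)\cong\Hom_k(W,\cat{C}(I,X))$. The degree $n$ term then becomes
\[
\prod_{X_0,\dots,X_n} \Hom_k\bigl(\cat{C}(X_n,X_{n-1})\otimes\dots\otimes\cat{C}(X_1,X_0),\,\cat{C}(X_n,X_0)\bigr),
\]
which is exactly the degree $n$ summand of the Hochschild cochain complex~\eqref{eqnhochschildcomplex} restricted to the chosen finite family of projectives. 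Since that family contains a projective generator, the restricted complex is canonically equivalent to the full Hochschild complex $\rint_{X\in\Proj\cat{C}}\cat{C}(X,X)$ by the standard invariance of Hochschild cochains under passage to an additively generating subcategory.

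Next, I check that the cosimplicial structure of~\eqref{eqnresA} corresponds to the Hochschild differential. The coface maps of the bar-type resolution of $\mathbb{A}$ either insert an identity morphism, producing a degenerate $\id_{X_i}$ in the string of composable arrows, or collapse two adjacent projective labels by composing maps $X_{i+1}\to X_i\to X_{i-1}$ via a pairing built from $d_{X_i}$. Under the adjunction above, the latter becomes honest composition in $\Proj\cat{C}$, and the signed alternating sum reproduces the Hochschild coboundary. This is the explicit realization of the equivalence~\eqref{eqnAHH} at the level of the particular model $\frint_{X\in\Proj\cat{C}}X\otimes X^\vee$ we have built.

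Finally, I verify that $\gamma^\bullet$ becomes the cup product $\smile$. The map~\eqref{eqnauxmapmult} takes cartesian projections on the exponent and contracts the inner pair $X_p^\vee\otimes X_p$ through the evaluation $d_{X_p}$. Under the adjunction, a $p$-cochain and a $q$-cochain correspond to families of morphisms $X_p\to X_0$ and $X_{p+q}\to X_p$; the triangle identity for duals turns the contraction by $d_{X_p}$ into their composition in $\cat{C}$ at the object $X_p$, which is precisely $\varphi\smile\psi$. Unitality is immediate. I expect the main obstacle to be the careful bookkeeping of the cosimplicial structure maps and the totalization signs; once a consistent sign convention is fixed, both the differential and the multiplicative compatibility reduce to the triangle identity for duality, which is the conceptual reason that the two models agree.
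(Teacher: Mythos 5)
Your proposal is correct and follows essentially the same route as the paper: restrict to the finite family of projectives (the Agreement Principle of McCarthy--Keller, which is what your ``standard invariance'' amounts to), identify the complexes degreewise through the duality adjunction $\cat{C}(I,X_0\otimes X_n^\vee)\cong\cat{C}(X_n,X_0)$, and use the zigzag identity for duals to turn the contraction along $d_{X_p}$ in \eqref{eqnauxmapmult} into composition over $X_p$, i.e.\ the cup product. One small slip worth fixing: inserting identities describes the codegeneracies of \eqref{eqnresA}, not its cofaces --- the coboundary is built only from the cofaces, whose inner ones compose adjacent morphism spaces and whose outer ones act by the first and last morphisms on $X_0$ and $X_n^\vee$.
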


	If we ignore the algebra structure, Proposition~\ref{propcocomplex} reduces (on cohomology) to an isomorphism \begin{align}HH^*(\cat{C})\cong \Ext^*_\cat{C}(I,\mathbb{A})\end{align} which is well-known for Hopf algebras and goes back to Cartan and Eilenberg \cite{cartaneilenberg} as reviewed in \cite[Proposition~2.1]{bichon} and formulated for arbitrary finite tensor categories in \cite[Corollary~7.5]{shimizucoend}. 
	The new aspect in Proposition~\ref{propcocomplex} is the compatibility with the algebra structures
	(and, on a technical level, the fact that we exhibit a very convenient cochain level model that we can work with later).

\begin{proof}[\slshape Proof of Proposition~\ref{propcocomplex}]
	The finite homotopy end $\frint_{X\in\Proj \cat{C}} X\otimes X^\vee$
	runs by definition over a full subcategory of $\Proj \cat{C}$ consisting of a family of finitely many objects that include a projective generator. For this proof, we denote this subcategory by $\cat{F}\subset \Proj\cat{C}$. Now by definition
	\begin{align} \frint_{X\in\Proj \cat{C}} X\otimes X^\vee &= \rint_{X\in\cat{F}} X\otimes X^\vee \ . \label{auxFeqn1}
	\end{align}
	Moreover, we set
	\begin{align}\frint_{X\in\Proj\cat{C}} \cat{C}(X,X) := \rint_{X\in\cat{F}} \cat{C}(X,X)
		=\rint_{X\in\cat{F}} \cat{F}(X,X) \ . 
		\label{auxFeqn2} \end{align}
	\begin{pnum}
		\item We define a map $\rint_{X\in\Proj \cat{C}}    \cat{C}(X,X)\to \cat{C}\left(I,\frint_{X\in\Proj \cat{C}} X \otimes X^\vee\right)$ by the commutativity of the triangle
		\begin{equation}
			\begin{tikzcd}
				\ar[rr,"(*)"] \ar[ddr,swap,"\simeq"] \rint_{X\in\Proj \cat{C}}    \cat{C}(X,X)  &&  \cat{C}\left(I,\frint_{X\in\Proj \cat{C}}   X\otimes X^\vee\right)     \\ \\
				& \frint_{X\in\Proj \cat{C}}    \cat{C}(X,X) \ , 	 \ar[ruu,swap,"\cong"]   \\
			\end{tikzcd}
		\end{equation}
		where 
		the maps appearing in the triangle are defined as follows:
		\begin{itemize}
			
			\item The equivalence on the left comes from the restriction to  $\cat{F}$.
			(This is the dual version of the Agreement Principle of McCarthy \cite{mcarthy} and Keller \cite{keller}, see \cite[Section~2.2]{dva} for a review.)
			
			\item The isomorphism on the right comes, in each degree, from the 
			isomorphisms
			\begin{align}
				&	\prod_{X_0,\dots,X_n\in \cat{F}} \cat{C}(X_n,X_0) ^{\cat{C}(X_1,X_0)\otimes\dots\otimes\cat{C}(X_n,X_{n-1})} \\ \cong\quad  &\cat{C}\left(I,\prod_{X_0,\dots,X_n\in \cat{F}} \left(    X_0\otimes X_n^\vee  \right)^{\cat{C}(X_1,X_0)\otimes\dots\otimes\cat{C}(X_n,X_{n-1})} \right)
			\end{align}
			induced by duality.
		\end{itemize}
		Now clearly $(*)$ is an equivalence.
		
		\item Since $\cat{F}\subset \Proj \cat{C}$ is a full subcategory, the cup product of 
		$\rint_{X\in\Proj\cat{C}}    \cat{C}(X,X)$ restricts to the cup product of
		$\rint_{X\in\cat{F}} \cat{F}(X,X)$. With the notation from~\eqref{auxFeqn2},
		this means that
		\begin{align} \bspace{\rint_{X\in\Proj\cat{C}}    \cat{C}(X,X)}{\smile}\ra{\simeq} \bspace{\frint_{X\in\Proj\cat{C}}    \cat{C}(X,X)}{\smile}
		\end{align} is an equivalence of differential graded algebras. 
		Hence, it remains to prove
		that \begin{align} \bspace{\frint_{X\in\Proj\cat{C}}    \cat{C}(X,X)}{\smile} \cong \bspace{\cat{C}\left(I,\frint_{X\in\Proj \cat{C}} X\otimes X^\vee \right)   }{\gamma^\bullet}
		\end{align}
		is an isomorphism of algebras. 
		After unpacking the definition
		of the multiplications on both sides,
		this means that for $X_0,\dots,X_{p+q}\in\cat{F}$ and the abbreviations \begin{align} V&:=\cat{C}(X_1,\dots,X_0)\otimes\dots\otimes\cat{C}(X_p,X_{p-1}) \ , \\
			W&:= \cat{C}(X_{p+1},X_p)\otimes\dots\otimes \cat{C}(X_{p+q},X_{p+q-1}) \ , 
		\end{align}
		the square
		\begin{equation}
			\begin{tikzcd}
				\ar[rr,"\smile"] \ar[dd,"\cong"] \cat{C}(X_p,X_0)^V \otimes \cat{C}(X_{p+q},X_p)^W  &&  \cat{C}(X_{p+q},X_0)^{V\otimes W}   \ar[dd,"\cong"]  \\ \\
				\cat{C}\left(I,{X_0\otimes X_p^\vee}^V\right)  \otimes   \cat{C}\left(I,{X_p\otimes X_{p+q}^\vee}^W\right) \ar[rr,"d_{X_p}"]   && 	\cat{C}\left(I,{X_0\otimes X_{p+q}^\vee}^{V\otimes W}\right)  \ ,    \\
			\end{tikzcd}
		\end{equation}
		in which the vertical isomorphisms come from duality, commutes. 
		The exponentials are just tensored together in both clockwise and counterclockwise direction. Since the cup product  composes over $X_p$, the commutativity of the square now boils down to the basic equality\small
		\begin{equation}\tikzfig{duality}
		\end{equation}\normalsize
		of morphisms $I\to X_0\otimes X_{p+q}^\vee$. 
	\end{pnum}
\end{proof}

\spaceplease
\section{Comparison Theorem}
In this section, we prove that Theorem~\ref{thmE2derivedhom} 
produces 
a solution to Deligne's Conjecture:

\begin{theorem}[Comparison Theorem]\label{thhmcomparisondeligne}
		For any finite tensor category $\cat{C}$, 
		the algebra structure on the canonical end $\mathbb{A}=\int_{X\in\cat{C}} X\otimes X^\vee$ and its canonical lift to the Drinfeld center induces  an $E_2$-algebra structure on the 
		homotopy invariants $\cat{C}(I,\mathbb{A}^\bullet)$. Under the equivalence $\cat{C}(I,\mathbb{A}^\bullet)\simeq \rint_{X\in \Proj \cat{C}} \cat{C}(X,X)$, this $E_2$-structure
		provides a solution to Deligne's Conjecture in the sense that it induces the standard Gerstenhaber structure on the Hochschild cohomology of $\cat{C}$.
\end{theorem}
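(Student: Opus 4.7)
The existence of an $E_2$-algebra structure on $\cat{C}(I,\mathbb{A}^\bullet)$ is immediate from Theorem \ref{thmE2derivedhom} applied to $\mathbb{T}=\mathbb{A}=U\alg$ with its canonical lift $\alg=R(I)\in Z(\cat{C})$, which is braided commutative by Lemma \ref{lemmadmno}. The substantive content of the theorem is thus the comparison of the resulting Gerstenhaber structure on $\Ext_\cat{C}^*(I,\mathbb{A})$ with the standard one on $HH^*(\cat{C})$ under the isomorphism $\Ext_\cat{C}^*(I,\mathbb{A})\cong HH^*(\cat{C})$. My plan is to handle the two pieces of this Gerstenhaber data separately. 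The cup product is already matched at cochain level by Proposition \ref{propcocomplex}, which provides an isomorphism of differential graded algebras between $(\rint_{X\in\Proj\cat{C}}\cat{C}(X,X),\smile)$ and the model $(\cat{C}(I,\frint_{X\in\Proj\cat{C}}X\otimes X^\vee),\gamma^\bullet)$ of $(\cat{C}(I,\mathbb{A}^\bullet),\gamma)$ furnished by Lemmas \ref{lemmaresA} and \ref{lemmamodelforresA}. So only the bracket remains.

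For the bracket I would work throughout in the explicit cochain model of the previous paragraph and extract a cochain-level representative of the $E_2$-bracket from the operad $J_\alg$. Braided commutativity $\mu_\alg\circ c_{\alg,\alg}=\mu_\alg$ supplies a distinguished $1$-chain in $J_\alg(2)$ connecting the two ways of multiplying; its image under the projection to $\catf{Sym}\,\check J_\alg(2)\simeq C_*(E_2(2);k)$ realizes the generator of $H_1(BP_2)=H_1(E_2(2);k)$ responsible for the bracket. Using the adjunction $L\dashv U$ together with the injective resolution $\frint_{X\in\Proj\cat{C}}X\otimes X^\vee$, one can then write down the resulting bilinear operation on $\cat{C}(I,\mathbb{A}^\bullet)$ as an explicit composite built from $\gamma^\bullet$ and a lift of the non-crossing half braiding \eqref{defeqnhalfbraiding}, applied on the component indexed by $(X_0,\dots,X_{p+q})\in\Proj\cat{C}$. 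Finally, push this operation through the duality isomorphisms of Proposition \ref{propcocomplex} to land in $\rint_{X\in\Proj\cat{C}}\cat{C}(X,X)$ and verify that it agrees, up to signs, with Gerstenhaber's formula $[\varphi,\psi]=\sum_i\pm\varphi\circ_i\psi\pm\sum_j\pm\psi\circ_j\varphi$.

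The main obstacle is precisely this last diagrammatic identification. The $E_2$-bracket on $\cat{C}(I,\mathbb{A}^\bullet)$ is formulated intrinsically in terms of the monoidal structure of $\cat{C}$, via the non-crossing half braiding built from evaluations and coevaluations, whereas Gerstenhaber's bracket is formulated purely in terms of the linear category $\Proj\cat{C}$ and composition of morphisms, with no reference to a tensor product. There is no formal reason for these to coincide; the matching must be established by an explicit diagram chase in the graphical calculus, in which the coevaluation/evaluation pair in the half braiding collapses, after applying duality, to a pairing that realizes insertion at a chosen argument while $\gamma^\bullet$ provides the composition at the internal object. Organising this check uniformly over all insertion positions and all families of projective objects, and tracking signs carefully between the Koszul conventions on the Hochschild complex and the operadic conventions on $\catf{Sym}\,\check J_\alg$, is what I expect to be the bulk of the proof, consistent with the authors' own warning that the argument is involved.
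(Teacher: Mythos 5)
Your reduction of the theorem is the same as the paper's: the $E_2$-structure itself comes from Theorem~\ref{thmE2derivedhom} applied to $\mathbb{A}=U\alg$ with Lemma~\ref{lemmadmno}, and the cup product is matched at cochain level by Proposition~\ref{propcocomplex} with the model $\frint_{X\in\Proj\cat{C}}X\otimes X^\vee$. The gap is in your treatment of the bracket, and it sits exactly where the paper's proof does its real work. You propose to ``write down the resulting bilinear operation on $\cat{C}(I,\mathbb{A}^\bullet)$ as an explicit composite built from $\gamma^\bullet$ and a lift of the non-crossing half braiding'' and then compare with Gerstenhaber's formula. But the operad $J_\alg$ acts on $\ialg$, an injective resolution \emph{in $Z(\cat{C})$}, whereas the resolution $\frint_{X\in\Proj\cat{C}}X\otimes X^\vee$ that makes the Hochschild identification strict does \emph{not} lift degree-wise to $Z(\cat{C})$; it only carries a homotopy coherent half braiding. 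So there is no canonical ``explicit composite'' to write down, and even after choosing one you must \emph{prove} that your candidate homotopy agrees, up to higher homotopy, with the homotopy the operadic structure actually produces from the chosen $1$-chain in $J_\alg(2)$. The paper settles this with essential-uniqueness arguments: the restriction maps obtained by precomposing with the coaugmentation $\mathbb{A}\to\mathbb{A}^\bullet$ (resp.\ $\alg\to\ialg$) are trivial fibrations, so a homotopy between $\gamma^\bullet_*$ and $\gbos$ is determined up to higher homotopy by its vanishing after this restriction; one then checks that both the topologically extracted homotopy and the explicit algebraic one (the sum of insertion operations $h_i^{p,q}$, which satisfies $L^{0,q}=0$) have this property. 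Your proposal contains no mechanism for this certification, and without it the claimed identification of the bracket is unjustified --- the diagrammatic matching with $\circ_i$ that you flag as the main obstacle is in fact the easy final step once $h$ has been pinned down.

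A second, smaller inaccuracy: the $1$-chain in $J_\alg(2)$ supplied by braided commutativity is a \emph{path} from $\gamma^\bullet_*$ to $\gbos$, not a cycle, so its image does not by itself realize the generator of $H_1(E_2(2);k)$. The bracket corresponds to the full loop in $E_2(2)\simeq \mathbb{S}^1$, i.e.\ to the composite $b=h+h\tau$ of the homotopy and its transpose, and only after the additional sign $[\varphi,\psi]=(-1)^p b(\varphi,\psi)$ does one recover the antisymmetric Gerstenhaber bracket $-(-1)^{(p-1)(q-1)}\alpha\circ\beta+\beta\circ\alpha$; the single $1$-chain only accounts for the circle product $\alpha\circ\beta$ up to sign. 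Your final formula does include both families of terms, but the proposal should make explicit that they arise from the two half-circle paths rather than from one distinguished chain.
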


\begin{proof}[\slshape Outline of the strategy of the proof of Theorem~\ref{thhmcomparisondeligne}.]	In order to obtain the $E_2$-structure on $\cat{C}(I,\mathbb{A}^\bullet)$, we specialize 
Theorem~\ref{thmE2derivedhom}
to the algebra $\mathbb{T}=\mathbb{A}\in\cat{C}$. This is possible because $\mathbb{A}$ lifts to a braided commutative algebra in $Z(\cat{C})$ by Lemma~\ref{lemmadmno}.
One can conclude from Proposition~\ref{propcocomplex} that 
the underlying multiplication of this $E_2$-structure on $\cat{C}(I,\mathbb{A}^\bullet)$ translates to the cup product on the Hochschild cochain complex under the equivalence $\cat{C}(I,\mathbb{A}^\bullet)\simeq \rint_{X\in \Proj \cat{C}} \cat{C}(X,X)$.
It remains to prove that the Gerstenhaber bracket that we extract
from the $E_2$-structure on $\cat{C}(I,\mathbb{A}^\bullet)$ constructed via Theorem~\ref{thmE2derivedhom}
yields the standard Gerstenhaber bracket on the Hochschild cohomology $H^* \left(   \rint_{X\in \Proj \cat{C}} \cat{C}(X,X)    \right)$. 

Unfortunately, this part is relatively involved and will occupy the rest of this section: We need to spell out a model for the  homotopy $h$
between the multiplication $\gamma^\bullet_*$ 
on $\cat{C}(I,\mathbb{A}^\bullet)$ coming from the product $\gamma^\bullet : \mathbb{A}^\bullet \otimes\mathbb{A}^\bullet \to\mathbb{A}^\bullet$
and the opposite multiplication
$\gbos$.
Of course, we cannot just exhibit \emph{any} homotopy, but need to compute the specific homotopy that the $E_2$-structure provided by Theorem~\ref{thmE2derivedhom} gives us. Afterwards, we will extract the Gerstenhaber bracket from the homotopy $h$.
Since the Gerstenhaber bracket is an operation on homology, it suffices to compute $h$ up to a higher homotopy.

More concretely, the homotopy $h$ between $\gamma^\bullet_*$ and $\gbos$ that we need to compute
is the evaluation of the map $C_*(E_2(2);k)\otimes \cat{C}(I,\mathbb{A}^\bullet)^{\otimes 2}\to \cat{C}(I,\mathbb{A}^\bullet)$ (that the $E_2$-structure provides for us)
on the 1-chain on $E_2(2)$ given by the path  in the configuration space of two disks shown in Figure~\ref{figE22}. 

\begin{figure}[h]
	\centering
\begingroup%
  \makeatletter%
  \providecommand\color[2][]{%
    \errmessage{(Inkscape) Color is used for the text in Inkscape, but the package 'color.sty' is not loaded}%
    \renewcommand\color[2][]{}%
  }%
  \providecommand\transparent[1]{%
    \errmessage{(Inkscape) Transparency is used (non-zero) for the text in Inkscape, but the package 'transparent.sty' is not loaded}%
    \renewcommand\transparent[1]{}%
  }%
  \providecommand\rotatebox[2]{#2}%
  \newcommand*\fsize{\dimexpr\f@size pt\relax}%
  \newcommand*\lineheight[1]{\fontsize{\fsize}{#1\fsize}\selectfont}%
  \ifx\svgwidth\undefined%
    \setlength{\unitlength}{122.39047212bp}%
    \ifx\svgscale\undefined%
      \relax%
    \else%
      \setlength{\unitlength}{\unitlength * \real{\svgscale}}%
    \fi%
  \else%
    \setlength{\unitlength}{\svgwidth}%
  \fi%
  \global\let\svgwidth\undefined%
  \global\let\svgscale\undefined%
  \makeatother%
  \begin{picture}(1,1.02367127)%
    \lineheight{1}%
    \setlength\tabcolsep{0pt}%
    \put(0,0){\includegraphics[width=\unitlength,page=1]{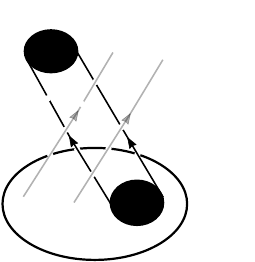}}%
    \put(0.51991552,0.20791529){\color[rgb]{0,0,0}\makebox(0,0)[lt]{\lineheight{1.25}\smash{\begin{tabular}[t]{l}\color{white}2\end{tabular}}}}%
    \put(0,0){\includegraphics[width=\unitlength,page=2]{E2_2-y.pdf}}%
    \put(0.1739865,0.19958574){\color[rgb]{0,0,0}\makebox(0,0)[lt]{\lineheight{1.25}\smash{\begin{tabular}[t]{l}1\end{tabular}}}}%
    \put(0,0){\includegraphics[width=\unitlength,page=3]{E2_2-y.pdf}}%
    \put(0.52376153,0.79208721){\color[rgb]{0,0,0}\makebox(0,0)[lt]{\lineheight{1.25}\smash{\begin{tabular}[t]{l}1\end{tabular}}}}%
    \put(0,0){\includegraphics[width=\unitlength,page=4]{E2_2-y.pdf}}%
    \put(0.17920681,0.80041044){\color[rgb]{0,0,0}\makebox(0,0)[lt]{\lineheight{1.25}\smash{\begin{tabular}[t]{l}\color{white}2\end{tabular}}}}%
    \put(0,0){\includegraphics[width=\unitlength,page=5]{E2_2-y.pdf}}%
  \end{picture}%
\endgroup%

	\caption{The path in $E_2(2)$ which provides for us the homotopy between multiplication and opposite multiplication. 
	The path parameter runs along the vertical axis.}
	\label{figE22}
\end{figure}

The steps of the proof are as follows:
\begin{pnum}
	\item {Construct explicitly with algebraic tools \emph{some} homotopy 
		(that in hindsight we call $h$)
		between the multiplication and the opposite multiplication on $\cat{C}(I,\mathbb{A}^\bullet)$.}
	\label{stepDi}
	\item {Prove that $h$ as constructed in step~\ref{stepDi} agrees up to higher homotopy with the topologically extracted homotopy described above.}
	\label{stepDii}
	
	\item {Extract the Gerstenhaber bracket from $h$ and prove that it agrees with the standard Gerstenhaber bracket on Hochschild cohomology.}
	\label{stepDiii}
\end{pnum}
\end{proof}

\subparagraph{\normalfont \textsc{Step}~\ref{stepDi}.} For 	step~\ref{stepDi},
			we will choose as the 
		injective resolution for $\mathbb{A}$
		the one from Lemma~\ref{lemmaresA}, i.e.\ the (finite) homotopy end
		$\frint_{X\in\Proj\cat{C}} X\otimes X^\vee$.
		With this model, the product
		on $\cat{C}(I,\mathbb{A}^\bullet)$ is given by the product
		$\gamma^\bullet$ of $\mathbb{A}^\bullet$ from~\eqref{eqndefgamma}.
		This has the advantage that $\gamma^\bullet$ translates strictly to the cup product on Hochschild cochains (Proposition~\ref{propcocomplex}).
		
	Let us now begin with the construction of $h$:	
		For $p,q \ge 0$ and $0\le i\le p-1$, we fix 
		an arbitrary family of $p+q$ projective objects
		\begin{align}C_i =(X_0,\dots,X_{i-1},Y_0,\dots,Y_q,X_{i+2},\dots,X_p)\end{align}
		(the labeling is chosen in hindsight and will become clear in a moment)	
		 and
		define the vector spaces
			\begin{align} V' &:= \cat{C}(X_1,X_0)\otimes \dots \otimes \cat{C}(X_{i-1},X_{i-2}) \otimes\cat{C}(Y_0,X_{i-1}) \ ,\label{eqnV1} \\
			W&:= \cat{C}(Y_1,Y_0)\otimes \dots \otimes \cat{C}(Y_q,Y_{q-1}) \ ,\label{eqnV2} \\
			V''&:= \cat{C}(X_{i+2},Y_q)\otimes\dots\otimes \cat{C}(X_p,X_{p-1}) \ .\label{eqnV3}
		\end{align}
		With this notation, the
	component $\left( \mathbb{A}^{p+q-1}\right)^{C_i}$ of the product $\mathbb{A}^{p+q-1}$ (this is the $p+q-1$-th term of $\mathbb{A}^\bullet$, not the $p+q-1$-fold monoidal product) indexed by $C_i$ is given by
		\begin{align}    \left( \mathbb{A}^{p+q-1}\right)^{C_i} = \left(  X_0\otimes X_p^\vee \right)^{V'\otimes  W \otimes V'' } \ , \label{eqnindexedbyCi}
	\end{align}
see~\eqref{eqnterminininj}.
	Similarly, for the  families
		\begin{align}D_i' &:=(X_0,\dots,X_{i-1},Y_0,Y_q,X_{i+2},\dots,X_p) \ ,\\
			D_i'' &:= (Y_0,\dots,Y_q) \ ,   \end{align}
		we have
		\begin{align}    \left( \mathbb{A}^{p}\right)^{D_i'} &= \left(    X_0\otimes X_p^\vee \right)^{V'\otimes \cat{C}(Y_q,Y_0)\otimes   V'' } \ ,\\
			\left( \mathbb{A}^{q}\right)^{D_i''} &= \left(   Y_0 \otimes Y_q^\vee \right)^{W} \ . 
		\end{align}
		Next observe
		\begin{align}
			& 	\cat{C}\left(I,  \left( \mathbb{A}^{p}\right)^{D_i'}   \right)\otimes \cat{C}\left(I,  \left( \mathbb{A}^{q}\right)^{D_i''}   \right)\\ = & \ \Hom_k (  V'\otimes \cat{C}(Y_q,Y_0)\otimes   V'',  \cat{C}(I,X_0 \otimes X_p^\vee)) \otimes \Hom_k(W,  \cat{C}(I,Y_0\otimes Y_q^\vee)  ) \\ \cong  \  &    \Hom_k (  V'\otimes \cat{C}(Y_q,Y_0)\otimes   V'',  \cat{C}(I,X_0 \otimes X_p^\vee)) \otimes \Hom_k(W,  \cat{C}(Y_q,Y_0)  )  \ . 
		\end{align}
		Composition over $ \cat{C}(Y_q,Y_0)$ provides a map
		to  $\Hom_k (  V'\otimes W\otimes   V'',  \cat{C}(I,X_0 \otimes X_p^\vee))$ which is $\cat{C}\left(  I,   \left( \mathbb{A}^{p}\right) ^{C_i} \right)$ by \eqref{eqnindexedbyCi}. Therefore, we obtain a map
		\begin{align}
			\left(  	h_i^{p,q} \right)^{C_i} : \cat{C}\left(I,  \left( \mathbb{A}^{p}\right)^{D_i'}   \right)\otimes \cat{C}\left(I,  \left( \mathbb{A}^{q}\right)^{D_i''}   \right) \to \cat{C}\left(I,  \left( \mathbb{A}^{p+q-1}\right)^{C_i}   \right)
		\end{align}
	decreasing degree by one. 
		By the universal property of the product, we may define the linear map
		\begin{align} h_i^{p,q} : \cat{C}(I,\mathbb{A}^p)\otimes\cat{C}(I,\mathbb{A}^q) \to 
			\cat{C}(I,\mathbb{A}^{p+q-1})
		\end{align}
		by the commutativity of the square
		\begin{equation}
			\begin{tikzcd}
				\ar[rrr,"h_i^{p,q}"] \ar[dd,swap,"\text{projection to components $D_i'$ and $D_i''$}"] \cat{C}(I,\mathbb{A}^p)\otimes\cat{C}(I,\mathbb{A}^q)  && & \cat{C}(I,\mathbb{A}^{p+q-1})   \ar[dd,"\text{projection to component $C_i$}"]  \\ \\
				\cat{C}\left(I,  \left( \mathbb{A}^{p}\right)^{D_i'}   \right)\otimes \cat{C}\left(I,  \left( \mathbb{A}^{q}\right)^{D_i''}   \right)   \ar[rrr,swap,"\left(   	h_i^{p,q} \right)^{C_i}"]   && &\cat{C}\left(   I,	 \left( \mathbb{A}^{p+q-1}\right)^{C_i}\right)  \ .    \\ 
			\end{tikzcd}
		\end{equation}
		and define the components of $h$ by
		\begin{align}
			h^{p,q} := \sum_{i=0}^{p-1}
			(-1)^{i +  (p-1-i)q  }h_i^{p,q} :
			\cat{C}\left(I,  \mathbb{A}^{p}\right)\otimes \cat{C}\left(I,  \mathbb{A}^{q}\right)\to \cat{C}\left(I,  \mathbb{A}^{p+q-1}\right) \ . \label{eqnhsigns}
		\end{align}
		In order to establish a graphical representation for the definition of $h$, we symbolize a $p$-cochain in $\cat{C}(I,\mathbb{A}^\bullet)$, i.e.\
		a vector in
		\begin{align}
			\prod_{X_0,\dots,X_p\in\Proj\cat{C}} \Hom_k \left(   \underbrace{\cat{C}(X_p,X_{p-1})  \otimes \dots \otimes \cat{C}(X_1,X_0)}_{(*)},  \underbrace{\cat{C}(I,   X_0\otimes X_p^\vee) }_{(**)}     \right)     
		\end{align}
		as follows:
		\begin{align} \tikzfig{mathbfA3}    \label{eqnmathbfA3}
		\end{align}
		The box labeled with $\varphi$ with attached legs represents the part $(**)$ of the cochain which is a morphism $I\to X_0 \otimes X_p^\vee$ (we suppress the labels because the cochains have components running over arbitrary labels). 
		The $p$ blank  boxes can be filled with composable morphisms and make the cochains multilinearly dependent on $p$ types of morphisms; this is part $(*)$ of the cochain. 
		With this diagrammatic presentation,
		we arrive at the following description of $h$:
		\begin{align} h_i^{p,q} \left( \  \tikzfig{mathbfA3} \otimes\tikzfig{mathbfA4}   \     \right)\quad =  \quad   \tikzfig{h}     \label{eqnmathbfh}
		\end{align}
		On the left, we have $p$  boxes 
		for $\varphi$ and $q$ blank boxes for $\psi$
		(dotted); the insertion is made in the $i$-th one. The total number of boxes on the right is $p+q$.

		Step~\ref{stepDi}
		is achieved with the following Lemma:
	\begin{lemma}\label{lemmah}
		With the  definition~\eqref{eqnhsigns} of $h$,
		 \begin{align} h\text{d}+\text{d}h=\gbos-\gamma_*^\bullet \ , \label{proofhomotopyh}\end{align}
		i.e.\
		$h$ is a homotopy from $\gamma_*^\bullet$ to $\gbos$.
		\end{lemma}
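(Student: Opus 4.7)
The plan is to reduce \eqref{proofhomotopyh} to the classical Gerstenhaber identity for Hochschild cochains by translating both sides through the isomorphism of Proposition~\ref{propcocomplex}, and then to verify the resulting identity by a direct combinatorial telescoping argument in the Hochschild cochain complex.

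First I would transport the setup to the Hochschild cochain complex $\frint_{X\in\Proj\cat{C}}\cat{C}(X,X)$. Proposition~\ref{propcocomplex} already identifies $\gamma^\bullet_*$ with the cup product $\smile$; dually, $\gbos$ becomes the opposite cup product. A direct inspection of the diagrammatic definition \eqref{eqnmathbfh} together with the description of the duality isomorphism used in the proof of Proposition~\ref{propcocomplex} shows that under this identification the map $h_i^{p,q}$ is precisely the classical Hochschild insertion-at-position-$i$ operation, which on homogeneous elements is given by inserting the $q$-cochain $\psi$ into the $i$-th argument of the $p$-cochain $\varphi$ and composing over the two objects $Y_0$ and $Y_q$. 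The linear combination \eqref{eqnhsigns} is therefore (up to overall sign) the Gerstenhaber circ-product $\varphi\circ\psi$.

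Next I would verify the identity $hd+dh=\gbos-\gamma^\bullet_*$ by the standard telescoping computation. The differential of a Hochschild cochain is the alternating sum $\sum_{j=0}^{n+1}(-1)^j \delta^j$ of coface maps, where $\delta^0$ and $\delta^{n+1}$ insert the outer morphisms and $\delta^j$ for $0<j<n+1$ composes adjacent morphisms. I would expand $d(h_i^{p,q}(\varphi,\psi))$ and $h_i^{p,q}(d\varphi,\psi)$ and $h_i^{p,q}(\varphi,d\psi)$ using these formulas and check that:
\begin{xenumerate}
\item All cofaces $\delta^j$ that act strictly inside $\varphi$ or strictly inside $\psi$ appear with opposite signs on the two sides and cancel.
\item The boundary cofaces of $\psi$ at positions $0$ and $q+1$, after insertion at slot $i$, coincide with cofaces of the inserted cochain at positions $i$ and $i+q$, and these telescope across the sum $\sum_{i=0}^{p-1}$ with the signs $(-1)^{i+(p-1-i)q}$ of \eqref{eqnhsigns}.
\item After all interior cancellations, only the extreme telescoped terms survive. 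The term from $i=-1$ (absent from the sum) produces $-\gamma^\bullet_*(\varphi,\psi)$ via the leftmost composition, while the term from $i=p$ (also absent) produces $+\gbos(\varphi,\psi)$ via the rightmost composition; the signs in \eqref{eqnhsigns} are designed so that these two boundary contributions come out with the correct Koszul signs.
\end{xenumerate}

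The main obstacle is the sign bookkeeping in step (iii): one must check that the choice $(-1)^{i+(p-1-i)q}$ together with the cosimplicial sign convention indeed gives the cancellations in (i), (ii) and produces $+\gbos - \gamma^\bullet_*$ rather than some other combination. This is essentially the sign verification in Gerstenhaber's original article, and while entirely routine it is the only nontrivial part of the argument; the rest is a direct diagrammatic expansion using \eqref{eqnmathbfh} and the definition of the coface maps of $\mathbb{A}^\bullet$.
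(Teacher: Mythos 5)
Your plan is essentially correct, but it takes a different organizational route from the paper. The paper proves \eqref{proofhomotopyh} by a direct telescoping computation carried out inside $\cat{C}(I,\mathbb{A}^\bullet)$ itself: it first records the boundary identities $h_0(\dif_0\varphi\otimes\psi)=\gamma^\bullet_*(\psi\otimes\varphi)$ and $h_p(\dif_{p+1}\varphi\otimes\psi)=\gamma^\bullet_*(\varphi\otimes\psi)$, then the case-by-case relations for $h_i(\dif_j\varphi\otimes\psi)$ (interior cofaces of $\varphi$ away from the insertion slot commute with $h_i$ up to reindexing, while the two cofaces adjacent to the slot are traded for the outer cofaces $\dif_0\psi$, $\dif_{q+1}\psi$ of $\psi$, cf.~\eqref{equationsforh3}), and finally does the sign bookkeeping explicitly. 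You instead transport everything through the degreewise isomorphism underlying Proposition~\ref{propcocomplex}, identify $h_i^{p,q}$ with the partial composition $\circ_i$ and $\gamma^\bullet_*$, $\gbos$ with the cup product and its opposite, and then run (or cite) Gerstenhaber's classical computation on the Hochschild side. This is legitimate: the relevant comparison map is a strict isomorphism of complexes (not just a quasi-isomorphism), so the chain-level identity pulls back to $\cat{C}(I,\mathbb{A}^\bullet)$, which is where the paper later needs it. What your route buys is conceptual clarity --- the Lemma becomes an instance of the classical circle-product identity; what it costs is that you must first verify that the duality isomorphism really intertwines $h_i^{p,q}$ with $\circ_i$ (including Koszul signs), a check the paper performs anyway, but only later in Step~(iii) of the Comparison Theorem, so no work is genuinely saved. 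Note also that your description of the cancellation pattern in item (2) is slightly off: the outer cofaces of $\psi$ do not match cofaces of the composite $\varphi\circ_i\psi$, but rather the interior cofaces of $\varphi$ adjacent to the insertion slot, exactly as in \eqref{equationsforh3}; and the surviving boundary terms arise from $h_0(\dif_0\varphi\otimes\psi)$ and $h_p(\dif_{p+1}\varphi\otimes\psi)$ rather than from fictitious summands at $i=-1$ and $i=p$. Since the entire content of the Lemma is this sign verification, which you defer, your write-up is a correct plan rather than a complete proof, but the plan would go through.
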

	
	\spaceplease
	\begin{proof} Let $\varphi$ and $\psi$ be cochains of degree $p$ and $q$, respectively. We will write $h_i$ instead of $h_i^{p,q}$ for better readability because the degree can be read off from the cochains that $h$ is being applied to.
		Thanks to
		\begin{align}
			h_0(\dif_0 \varphi\otimes \psi) &= h_0 \left( \  \tikzfig{lder} \otimes\tikzfig{mathbfA4}   \     \right) \ =\    \tikzfig{h1}\ = \gamma_*^\bullet(\psi\otimes\varphi)\ ,  \label{equationsforh1}   \\[1ex]
			h_p(\dif_{p+1}\varphi \otimes \psi) &= h_p \left( \  \tikzfig{rder} \otimes\tikzfig{mathbfA4}   \     \right)\ = \ \tikzfig{h2} \ = \gamma_*^\bullet(\varphi\otimes \psi)  \ ,  \label{equationsforh2}
			\end{align}
		we obtain 
		\begin{align}
			h(\dif \varphi \otimes \psi ) &= (-1)^{pq} h_0(\dif_0 \varphi\otimes \psi)+\sum_{i=1}^p (-1)^{i+(p-i)q} h_i (\dif_0 \varphi\otimes \psi)\\ & \phantom{=} +\underbrace{\sum_{i=0}^p \sum_{j=1}^p (-1)^{i+j+(p-i)q} h_i (\dif_j \varphi\otimes \psi)}_{=:S(\varphi,\psi)} + \sum_{i=0}^{p-1} (-1)^{i+p+1+(p-i)q} h_i (\dif_{p+1} \varphi \otimes \psi)\\ & \phantom{=}- h_p(\dif_{p+1}\varphi \otimes \psi) \\ &= (-1)^{pq} \gamma_*^\bullet(\psi \otimes \varphi)-\dif_0 h(\varphi\otimes \psi)+S(\varphi,\psi)\\ &\phantom{=}
			-(-1)^{p+q} \dif_{p+q} h(\varphi\otimes \psi)-\gamma_*^\bullet(\varphi\otimes\psi)\ . \label{hvarphipsi}
			\end{align}
		Next we further compute $S(\varphi,\psi)$: For $0\le i\le p$ and $1\le j\le p$, we find
		\begin{align}  h_i (\dif_j \varphi\otimes \psi) = \left\{ \begin{array}{cl} \dif_{j-1+q} h_i (\varphi\otimes \psi) \ , & i<j-1 \ , \\ h_i (\varphi\otimes \dif_{q+1} \psi) \ , & i=j-1 \ , \\ h_{i-1} (\varphi \otimes \dif_0\psi) \ , & i=j \ , \\  \dif_{j} h_{i-1} (\varphi\otimes \psi) \ , & i>j	 \ .   \end{array}\right.    \label{equationsforh3}
			\end{align}
		With the dummy variables	$\ell = j-1+q$
		and
		 $m=i-1$, this leads to
		\begin{align}
		S(\varphi,\psi)&=	- \sum_{\substack{0\le i\le p-1 \\ q\le \ell\le p-1+q \\ \ell > i+q}}	 (-1)^{i+\ell+(p-1-i)q} \dif_{\ell} h_i (\varphi\otimes \psi) - \sum_{ i=0}^{p-1} (-1)^{(p-i)q} h_i (\varphi \otimes \dif_{q+1} \psi) \\ &\phantom{=} +  \sum_{i=0}^{p-1} (-1)^{(p-1-i)q} h_i (\varphi\otimes \dif_0\psi) - \sum_{\substack{   0\le m\le p-1 \\ 1\le j\le p \\  m\ge j }}  (-1)^{m+j+(p-1-m)q}       \dif_{j} h_{m} (\varphi\otimes \psi) \\ &=K(\varphi,\psi)
	- (-1)^p \left(       h(\varphi \otimes (-1)^{q+1}\dif_{q+1} \psi)           +      h(\varphi \otimes \dif_0 \psi)\right)  \\ \small \text{with}\quad K(\varphi,\psi) &:= 	- 	\sum_{i=0}^{p-1}  \left(  \sum_{\substack{ q\le j\le p-1+q \\ j> i+q  }}    (-1)^{i+j+(p-1-i)q} \dif_{j} h_i (\varphi\otimes \psi)+
	\sum_{\substack{   1\le j\le p \\  i\ge j }}  (-1)^{i+j+(p-1-i)q} \dif_{j} h_{i} (\varphi\otimes \psi)\right) \ . 
		\end{align}
	\normalsize
As a consequence,	we obtain 
	\begin{align}
		S(\varphi,\psi)+(-1)^p h(\varphi\otimes \dif \psi)&=K(\varphi,\psi)+(-1)^p \sum_{i=0}^{p-1}\sum_{j=1}^q (-1)^{i+j+(p-1-i)(q+1)} h_i (\varphi \otimes \dif_j \psi) \\ &= K(\varphi,\psi)+(-1)^p \sum_{i=0}^{p-1}\sum_{j=1}^q (-1)^{i+j+(p-1-i)(q+1)} \dif_{i+j} h_i(\varphi \otimes \psi) \\ &\phantom{=} \left(\text{because} \    h_i (\varphi \otimes \dif_j \psi)=\dif_{i+j} h_i(\varphi \otimes \psi)\ \text{for}\ 0\le i\le p-1 \ , \ 1\le j\le q\right)\label{equationsforh4} \\
		&=
		K(\varphi,\psi)+(-1)^p \sum_{\substack{0\le i\le p-1 \\  i+1\le j\le i+q  }} (-1)^{j+(p-1-i)(q+1)} \dif_{j} h_i(\varphi \otimes \psi) \\ &=
		K(\varphi,\psi)- \sum_{\substack{0\le i\le p-1 \\  i+1\le j\le i+q  }} (-1)^{i+j+(p-1-i)q} \dif_{j} h_i(\varphi \otimes \psi) \\ &= - \sum_{i=0}^{p-1} \sum_{j=1}^{p+q-1} (-1)^{i+j+(p-1-i)q} \dif_{j} h_i(\varphi \otimes \psi) \\ &= - \dif h(\varphi \otimes \psi)  +\dif_0 h(\varphi\otimes \psi)+(-1)^{p+q}\dif_{p+q} h(\varphi\otimes \psi)   \ . \label{eqndifdif}
		\end{align}
	In summary,
	\begin{align}
	h(\dif(\varphi\otimes\psi))	&= h(\dif \varphi \otimes \psi )+(-1)^p h(\varphi\otimes \dif \psi) \\ &\stackrel{\eqref{hvarphipsi}}{=} (-1)^{pq} \gamma_*^\bullet(\psi \otimes \varphi)-\dif_0 h(\varphi\otimes \psi)+S(\varphi,\psi)\\ &\phantom{=}
	-(-1)^{p+q} \dif_{p+q} h(\varphi\otimes \psi)-\gamma_*^\bullet(\varphi\otimes\psi)+(-1)^p h(\varphi\otimes \dif \psi)\\ &\stackrel{\eqref{eqndifdif}}{=}  
	(-1)^{pq} \gamma_*^\bullet(\psi \otimes \varphi)-\gamma_*^\bullet(\varphi\otimes\psi)-\text{d}h(\varphi\otimes \psi) \ .         
		\end{align}
		This proves~\eqref{proofhomotopyh} and hence the Lemma.
		\end{proof}

\subparagraph{\normalfont \textsc{Step}~\ref{stepDii}.}
		We  now prepare ourselves to prove that the homotopy $h$ constructed algebraically in step~\ref{stepDi} agrees with
		the topologically extracted homotopy:
			The $E_2$-structure on $\cat{C}(I,\mathbb{A}^\bullet)$
		comes by construction from the braided $J_{\alg}$-algebra structure on an injective resolution $\alg^\bullet$ of $\alg \in Z(\cat{C})$.
		In this description, one needs
		$\mathbb{A}^\bullet = U\ialg$.
		We will cover afterwards the situation for an injective resolution of $\mathbb{A}\in\cat{C}$ which does not lift degree-wise to the Drinfeld center.
		The construction  will now be spelled out:
		
		\begin{lemma}\label{lemmaextracth}
			For any finite tensor category $\cat{C}$, let $\ialg$ be an injective resolution of the canonical algebra $\alg \in Z(\cat{C})$.
			Moreover, set $\mathbb{A}^\bullet := U\ialg$.
			\begin{pnum}
				
				\item Denote by $\Gamma :  \ialg\otimes\ialg \to \ialg$ the product of $\ialg$ (any extension of the product $\alg \otimes\alg \to \alg$ to the injective resolution) and by     $c_{\ialg,\ialg}:\ialg\otimes\ialg\to\ialg\otimes\ialg$ the braiding of the differential graded object $\ialg$ in $Z(\cat{C})$. 
				There is an essentially unique homotopy $\catf{H}:\Gamma \simeq \Gamma^\op := \Gamma \circ c_{\ialg,\ialg}$  (essentially unique means here up to higher homotopy)
				with the additional property that the precomposition with the coaugmentation $\iota_\alg^{\otimes 2} : \alg^{\otimes 2} \to \ialg^{\otimes 2}$
				\begin{align} Z(\cat{C})\left(\ialg^{\otimes 2},\ialg\right) \ra{\simeq}  Z\left(\cat{C})(\alg^{\otimes 2},\ialg\right) 
				\end{align} sends $\catf{H}$ to the zero self-homotopy of the map $\alg^{\otimes 2}\to\alg \ra{\iota_\alg} \ialg$ that first applies the product of $\alg$ (or the opposite product, which is equal) and then the coaugmentation. \label{step1lemmaex}

				\item 	If we apply the forgetful functor $U:Z(\cat{C})\to\cat{C}$, $\catf{H}$
				yields a  homotopy $U\catf{H}$  from the multiplication $\gamma^\bullet : \mathbb{A}^\bullet \otimes\mathbb{A}^\bullet \to\mathbb{A}^\bullet$ (extending the product $\gamma : \mathbb{A} \otimes\mathbb{A}\to\mathbb{A}$) to  \begin{align} \label{anothermulteqn} \bar \gamma^\bullet := \gamma^\bullet \circ U(c_{\ialg,\ialg})  : \mathbb{A}^\bullet \otimes\mathbb{A}^\bullet \ra{U(c_{\ialg,\ialg})} \mathbb{A}^\bullet \otimes \mathbb{A}^\bullet \ra{\gamma^\bullet}\mathbb{A}^\bullet \ . \end{align} 
				The multiplications $\gamma^\bullet$ and $\bar \gamma^\bullet$ on $\mathbb{A}^\bullet$ induce the multiplications $\gamma_*^\bullet$ and $\gbos$ on $\cat{C}(I,\mathbb{A}^\bullet)$, respectively. \label{lemmaextracth2}
				The homotopy $h$ from $\gamma_*^\bullet$
				to $\gbos$ extracted from the $E_2$-structure on $\cat{C}(I,\mathbb{A}^\bullet)$
				is the result of applying $\cat{C}(I,-)$ to $U\catf{H}$, i.e.\ it is given by the composition
				\begin{align}
			\label{eqngeth}		h:	\cat{C}(I,\mathbb{A}^\bullet) \otimes 	\cat{C}(I,\mathbb{A}^\bullet) \ra{\text{lax monoidal structure of $\cat{C}(I,-)$}}  \cat{C}(I,{\mathbb{A}^\bullet}^{\otimes 2})  \ra{\cat{C}(I,U\catf{H})} \cat{C}(I,\mathbb{A}^\bullet) \ . 
				\end{align}
				
				\end{pnum} 
			\end{lemma}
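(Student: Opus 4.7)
The plan is to split the proof along the two parts of the lemma. Part~(i) is an obstruction-theoretic lifting argument along a trivial fibration, and part~(ii) compares the algebraically obtained homotopy $\catf{H}$ with the homotopy read off from the $E_2$-structure via the Recognition Principle.

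\emph{Part~(i).} First I would show that
\[ (\iota_\alg^{\otimes 2})^* \colon Z(\cat{C})(\ialg^{\otimes 2}, \ialg) \to Z(\cat{C})(\alg^{\otimes 2}, \ialg) \]
is a trivial fibration of chain complexes. This uses the same input as in the proof of Proposition~\ref{propJ}: the exactness of the monoidal product in $Z(\cat{C})$ makes $\iota_\alg^{\otimes 2} \colon \alg^{\otimes 2} \to \ialg^{\otimes 2}$ a trivial cofibration in the injective model structure, and $\ialg$ is fibrant. Braided commutativity of $\alg$ (Lemma~\ref{lemmadmno}) combined with naturality of the braiding gives
\[ \Gamma \circ \iota_\alg^{\otimes 2} \;=\; \iota_\alg \circ \mu_\alg \;=\; \iota_\alg \circ \mu_\alg \circ c_{\alg,\alg} \;=\; \Gamma^\op \circ \iota_\alg^{\otimes 2}. \]
Thus $\Gamma^\op - \Gamma$ is a $0$-cycle in the acyclic fiber of $(\iota_\alg^{\otimes 2})^*$ over $\iota_\alg \circ \mu_\alg$, and picking a primitive produces a $1$-chain $\catf{H}$ in that fiber with $\partial \catf{H} = \Gamma^\op - \Gamma$, i.e., a homotopy $\catf{H}\colon \Gamma \simeq \Gamma^\op$ whose image under $(\iota_\alg^{\otimes 2})^*$ is the zero self-homotopy. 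Any two such choices differ by a $1$-cycle in the same acyclic fiber, hence by a boundary, which gives the claimed essential uniqueness.

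\emph{Part~(ii).} Since $U$ is exact and strong monoidal, $U\catf{H}$ is automatically a homotopy from $U\Gamma = \gamma^\bullet$ to $U\Gamma^\op = \gamma^\bullet \circ U(c_{\ialg,\ialg}) = \bar\gamma^\bullet$, so the composition~\eqref{eqngeth} is a homotopy from $\gamma^\bullet_*$ to $\gbos$; the identification $U\bar\gamma^\bullet \mapsto \gbos$ uses that the braided lax monoidal functor $\cat{C}(I,-)$ transports the braiding $U(c_{\ialg,\ialg})$ to the Koszul symmetry on $\cat{C}(I, \mathbb{A}^\bullet)^{\otimes 2}$, as encoded in Lemma~\ref{lemmabop1}. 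To match~\eqref{eqngeth} with the homotopy extracted from the $E_2$-action, I would trace through the construction of Theorem~\ref{thmE2derivedhom}: the multiplication $\Gamma$ is a $0$-cycle in $J_\alg(2)$ via the pullback~\eqref{eqncheckDdefined}, and the generator $\sigma \in B_2$ acts on it by $\Gamma \cdot \sigma = \Gamma^\op$. Under the Recognition Principle equivalence $\catf{Sym}\,\check J_\alg(2) \simeq C_*(E_2(2); k)$, the path in Figure~\ref{figE22} represents the half-twist class generating $\pi_1(E_2(2)/\Sigma_2) \cong B_2$, and lifts through $\check J_\alg(2) \to J_\alg(2)$ to a $1$-chain with boundary $\Gamma^\op - \Gamma$ sitting in the fiber of~\eqref{eqncheckDdefined} over $\iota_\alg \circ \mu_\alg$. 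The essential uniqueness of part~(i) then forces this lift to coincide with $\catf{H}$ up to higher homotopy, and applying the braided lax monoidal functor $Z(\cat{C})(\coalg, -) \cong \cat{C}(I, U-)$ (Lemmas~\ref{lemmabop1} and~\ref{lemmabop2}) yields exactly~\eqref{eqngeth}.

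\emph{Main obstacle.} The delicate step is the identification at the end of part~(ii): one must verify that the topological path in Figure~\ref{figE22} admits, after cofibrant replacement and application of $\catf{Sym}$, a lift to $J_\alg(2)$ which respects the pullback structure of~\eqref{eqncheckDdefined}, and in particular meets the zero-self-homotopy condition after pull-back along $\iota_\alg^{\otimes 2}$; only then does the essential uniqueness of part~(i) bridge the algebraic and topological sides. Additionally, keeping track of the Koszul signs that turn $U\bar\gamma^\bullet$ into the opposite multiplication $\gbos$ on $\cat{C}(I, \mathbb{A}^\bullet)$ requires some care.
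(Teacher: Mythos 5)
Your proposal is correct and takes essentially the same route as the paper: the paper's own proof is just the terse observation that everything follows from the definition of the braided operad $J_\somealg$ in \eqref{eqncheckDdefined} (whose pullback description encodes exactly your trivial-fibration/acyclic-kernel argument and the vanishing-after-restriction characterization of $\catf{H}$) together with an unpacking of Proposition~\ref{propE2onhom} and Theorem~\ref{thmE2derivedhom}, identifying $\catf{H}$ as the evaluation of the $J_\somealg(2)$-action on a $1$-chain mapping to the path of Figure~\ref{figE22}. Your part~(i) and the bridging of the algebraic and topological homotopies via the essential uniqueness are precisely that unpacking, written out in more detail than the paper provides.
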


		\begin{proof}
			All of this is consequence of the definition of the braided operad $J_\alg$ (see~\eqref{eqncheckDdefined}),
			 a careful unpacking of the proofs of Proposition~\ref{propE2onhom} and Theorem~\ref{thmE2derivedhom},
			and the fact $\catf{H}$, as described in \ref{step1lemmaex}, is the result of the evaluation of the $J_\alg$-action 
			\begin{align}
				J_\alg(2) \to Z(\cat{C}) \left(  \ialg^{\otimes 2},\ialg\right)
			\end{align}
			on a 1-chain that is mapped
			by the epimorphism $J_\alg(2)\to C_*(E_2(2);k)$
			to the 1-chain in $E_2(2)$ described in Figure~\ref{figE22}. 
			\end{proof}
		
		One should be a bit careful to see \eqref{anothermulteqn} as an `opposite' multiplication because $U(c_{\ialg,\ialg})$
		is \emph{not} a part of a braiding in $\cat{C}$. Nonetheless,
		the images of the braiding in $Z(\cat{C})$ under $U$ turn $(\mathbb{A}^\bullet)^{\otimes n}$ into a $B_n$-module.

		If we want to use Lemma~\ref{lemmaextracth} to extract the topologically defined homotopy and compare it with the one concretely given in~\eqref{eqnhsigns}, there is a problem: 
		The resolution $\mathbb{A}^\bullet=\frint_{X\in\cat{C}} X\otimes X^\vee$
		used to write down the homotopy in~\eqref{eqnhsigns} does not lift degree-wise to $Z(\cat{C})$, i.e.\ it does not come with a half braiding.
		However, it comes with a structure that one could call a \emph{homotopy coherent half braiding}. This means that the half braiding for $\mathbb{A}$ (the non-crossing half braiding from~\eqref{defeqnhalfbraiding}) can be extended to $\mathbb{A}^\bullet$ (but without being a half braiding degree-wise). With this homotopy coherent half braiding, Lemma~\ref{lemmaextracth} remains in principle true, but a little more care is required in some places.   
		In order to provide the details,
		we will adapt the calculus for monoidal categories such that we can effectively compute with the resolution $\mathbb{A}^\bullet=\frint_{X\in\Proj\cat{C}} X\otimes X^\vee$.
		Recall that its $p$-cochains live in the product
		\begin{align}
			\prod_{X_0,\dots,X_p\in\Proj\cat{C}} \left(  X_0\otimes X_p^\vee  \right)^{  \cat{C}(X_p,X_{p-1})  \otimes \dots \otimes \cat{C}(X_1,X_0)    }  
		\end{align}  We will write the component of an $p$-cochain indexed by $(X_0,\dots,X_p)$ in the graphical calculus by
		\begin{align}\tikzfig{mathbfA} \ .    \label{eqnmathbfA}
		\end{align}
		Formally speaking, this picture is to be read as the projection \begin{align} \mathbb{A}^p \to \left(  X_0\otimes X_p^\vee  \right)^{  \cat{C}(X_p,X_{p-1})  \otimes \dots \otimes \cat{C}(X_1,X_0)    } \ . \label{eqnpowering} \end{align} The boxes represent the vector spaces appearing in the exponent of the powering~\eqref{eqnpowering}.
		More precisely, the box
		between $X_{j+1}$ and $X_j$ for $0\le j\le p-1$ represents a blank argument that can be filled with a morphism $X_{j+1}\to X_j$. 
		The dotted line is purely mnemonic: It is not a coevaluation, but symbolizes the constraint that the object on the upper right (here: $X_p^\vee$) must be dual to the one in the left bottom (here: $X_p$).  With this notation, we can actually omit the labeling in \eqref{eqnmathbfA} because all components in the picture run over all labels, with the single constraint implemented through the dotted line.

		Now we can give the 
		\emph{homotopy coherent half braiding}
		of our resolution $\mathbb{A}^\bullet$ with $X\in\cat{C}$ by
		\begin{align}
			c_{\mathbb{A}^\bullet,X} \ := \ \tikzfig{chb}\ :\ \mathbb{A}^\bullet \otimes X \to X\otimes\mathbb{A}^\bullet \ . \label{eqnhalfbraidingcoh}
			\end{align}
		The lines drawn through the boxes indicate that  identities have been inserted. 
		Note that $c_{\mathbb{A}^\bullet,X}$ is determined up to a contractible choice by the fact that the restriction along the coaugmentation $\iota_\mathbb{A} : \mathbb{A}\to\mathbb{A}^\bullet$ gives us   $(X\otimes \iota_\mathbb{A})\circ c_{\mathbb{A},X}$, where $c_{\mathbb{A},X}$ is the
		usual non-crossing half braiding on $\mathbb{A}$.
			The following is a  consequence of this characterization of 	$c_{\mathbb{A}^\bullet,X}$:

		\begin{lemma}\label{lemmacohhalfbraiding}\begin{pnum}
				\item\label{lemmacohhalfbraiding1}
			The homotopy coherent half braiding \eqref{eqnhalfbraidingcoh} endows
			${\mathbb{A}^\bullet}^{\otimes n}$ with a homotopy coherent action of $B_n$ which is the essentially unique $B_n$-action making the $n$-fold coaugmentation
			\begin{align}	
					\mathbb{A}^{\otimes n} \ra{\simeq} \left(\mathbb{A}^\bullet\right)^{\otimes n}      \label{eqnequivBnmap}
				\end{align}
			$B_n$-equivariant up to coherent homotopy.
			
			\item \label{lemmacohhalfbraiding11}
			There is a unique homotopy $H:\gamma^\bullet \simeq \gamma^\bullet \circ c_{\mathbb{A}^\bullet,\mathbb{A}^\bullet}$ that becomes trivial if we precompose with the coaugmentation in the first slot.
			
			\item\label{lemmacohhalfbraiding2}
			The homotopy coherent half braiding  $	c_{\mathbb{A}^\bullet,-}$ is natural up to coherent homotopy: For objects $X$ and $Y$ in $\cat{C}$ (that can themselves be differential graded if $	c_{\mathbb{A}^\bullet,-}$ is understood degree-wise) 
			\begin{align}
				\tikzfig{hb1} \ \stackrel{N}{\simeq} \ (-1)^{\varepsilon} \tikzfig{hb2}  :    \mathbb{A}^\bullet \otimes \cat{C}(X,Y) \otimes X \to Y \otimes \mathbb{A}^\bullet  \ , 
				\end{align}
			holds up to a coherent homotopy that we denote by $N$ (we suppress the dependence on $X$ and $Y$ in the notation).
			The box with the question mark can be filled with a morphism $X\to Y$. The integer $\varepsilon$ is the product of the degree of $?$ and the degree of $\mathbb{A}^\bullet$.
			This homotopy is the essentially unique one that becomes trivial if we precompose with the coaugmentation $\mathbb{A}\to\mathbb{A}^\bullet$.
			
			\item\label{lemmacohhalfbraiding3} Through the homotopies $N$,
			the maps
			\begin{align}
			\cat{C}(I,\mathbb{A}^\bullet)^{\otimes n} \to \cat{C}(I,{\mathbb{A}^\bullet}^{\otimes n})     \label{eqnequivmap}
		\end{align}
	become $B_n$-equivariant up to coherent homotopy, where on the left hand side the action is the strict action factoring through the permutation group.
			\end{pnum}
			\end{lemma}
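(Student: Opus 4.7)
The plan is to deduce all four assertions from a single lifting principle, already used in the proof of Proposition~\ref{propJ}. The coaugmentation $\iota_\mathbb{A}:\mathbb{A}\to\mathbb{A}^\bullet$ is a trivial cofibration in the injective model structure on chain complexes in $\cat{C}$, and exactness of the monoidal product makes $\iota_\mathbb{A}^{\otimes n}:\mathbb{A}^{\otimes n}\to(\mathbb{A}^\bullet)^{\otimes n}$ again a trivial cofibration. Since $\mathbb{A}^\bullet$ is fibrant, the restriction map
\begin{equation}
\cat{C}\bigl(Y\otimes(\mathbb{A}^\bullet)^{\otimes n},\mathbb{A}^\bullet\bigr)\xrightarrow{\ \simeq\ }\cat{C}\bigl(Y\otimes\mathbb{A}^{\otimes n},\mathbb{A}^\bullet\bigr)
\end{equation}
is a trivial fibration of chain complexes for every chain complex $Y$ in $\cat{C}$. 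The guiding principle is that any map defined on the $\mathbb{A}^{\otimes n}$-side lifts essentially uniquely to $(\mathbb{A}^\bullet)^{\otimes n}$, and any relation holding strictly after restriction along $\iota_\mathbb{A}^{\otimes n}$ is promoted to a coherent homotopy, with the trivialization on the coaugmented side pinning the lift down up to higher homotopy.

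I would first prove part~\ref{lemmacohhalfbraiding2}: the strict half braiding $c_{\mathbb{A},-}$ of $\alg\in Z(\cat{C})$ is natural, and precomposing both sides of the putative naturality square for $c_{\mathbb{A}^\bullet,-}$ with $\iota_\mathbb{A}\otimes\cat{C}(X,Y)\otimes X$ shows that they agree strictly; the lifting principle then furnishes the essentially unique homotopy $N$ subject to the claimed normalization. For part~\ref{lemmacohhalfbraiding1} I would proceed next: the strict half braiding endows $\mathbb{A}^{\otimes n}$ with a genuine $B_n$-action through iterated braidings, and taking iterated coherent half braidings $c_{\mathbb{A}^\bullet,\mathbb{A}^\bullet}^{\pm 1}$ yields candidate generators on $(\mathbb{A}^\bullet)^{\otimes n}$ whose images under $\iota_\mathbb{A}^{\otimes n}$ satisfy the braid and inverse relations strictly. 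Each relation is then promoted to a coherent homotopy, and the tower of higher coherences is built inductively by the same principle, giving the homotopy coherent $B_n$-action; essential uniqueness is the uniqueness half of the lifting statement.

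Part~\ref{lemmacohhalfbraiding11} I would handle as an immediate instance: $\gamma^\bullet$ and $\gamma^\bullet\circ c_{\mathbb{A}^\bullet,\mathbb{A}^\bullet}$ restrict along $\iota_\mathbb{A}\otimes\mathbb{A}^\bullet$ to the same map because $\gamma\circ c_{\mathbb{A},\mathbb{A}}=\gamma$ by the braided commutativity of $\alg$ (Lemma~\ref{lemmadmno}), so the desired homotopy $H$ exists and is unique under the stated trivialization. Finally, for part~\ref{lemmacohhalfbraiding3}, for $f,g:I\to\mathbb{A}$, naturality of the strict half braiding together with $c_{I,I}=\mathrm{id}_I$ yields $c_{\mathbb{A},\mathbb{A}}\circ(f\otimes g)=g\otimes f$, so $\cat{C}(I,\mathbb{A})^{\otimes n}\to\cat{C}(I,\mathbb{A}^{\otimes n})$ already intertwines the permutation action with the strict $B_n$-action; transporting along parts~\ref{lemmacohhalfbraiding1} and~\ref{lemmacohhalfbraiding2} turns this strict intertwiner into the required coherent equivariance of \eqref{eqnequivmap}. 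The main obstacle will be the bookkeeping of higher coherences throughout; the device of requiring strict trivialization on the coaugmented side at every stage is precisely what reduces each inductive step to the single existence-and-uniqueness statement provided by the trivial fibration above.
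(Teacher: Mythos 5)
Your proposal is correct and takes essentially the same route as the paper: the paper's own (very terse) proof likewise rests on the fact that the homotopy coherent half braiding, and every structure built from it, is determined up to contractible choice by its restriction along the coaugmentation, the restriction map on hom complexes being a trivial fibration exactly as in the proof of Proposition~\ref{propJ}, with the strict non-crossing half braiding and braided commutativity of $\mathbb{A}$ supplying the restricted data. Your handling of part~\ref{lemmacohhalfbraiding3} via part~\ref{lemmacohhalfbraiding2} is also the paper's argument (which simply specializes it to $X=I$ and $Y=\mathbb{A}^\bullet$), just phrased through the strict identity on $\mathbb{A}$ first.
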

		
		\begin{proof}
			The points~\ref{lemmacohhalfbraiding1}, \ref{lemmacohhalfbraiding11} and \ref{lemmacohhalfbraiding2} follow	 from the construction because the homotopy coherent half braiding reduces to the usual non-crossing braiding if we precompose with the coaugmentation. 
			One obtains~\ref{lemmacohhalfbraiding3} by specializing \ref{lemmacohhalfbraiding2} to $X=I$ and $Y=\mathbb{A}^\bullet$.
			\end{proof}

		We cannot only endow the homotopy end $\frint_{X\in\Proj\cat{C}} X\otimes X^\vee$
		with a homotopy coherent half braiding, but also  an injective resolution $U\ialg$ of $\mathbb{A} \in \cat{C}$ that comes from an arbitrary injective resolution of $\alg\in Z(\cat{C})$. In the latter case, we have of course an actual half braiding, so the statements of Lemma~\ref{lemmacohhalfbraiding} hold in a much stricter sense (for all points except~\ref{lemmacohhalfbraiding2}, the coherence data are trivial). 
		Now we make two observations:
		\begin{itemize}
			\item The injective resolutions $\frint_{X\in\Proj\cat{C}} X\otimes X^\vee$ and $U\ialg$ are homotopy equivalent, and their homotopy coherent half braidings agree, possibly up to higher homotopy. The first part is standard, the second part comes from the fact that the half braiding is by construction determined by its restriction to $\mathbb{A}$.
			In fact, \emph{all} injective resolutions of $\mathbb{A}$ with homotopy coherent half braiding are equivalent.
			
			\item The homotopy $h:\gamma_*^\bullet \simeq \gbos$ described in Lemma~\ref{lemmaextracth} depends only on $U\ialg$ as object with half braiding. Indeed, we get the multiplication $\gamma^\bullet$ from the multiplication on $\mathbb{A}$, $\bar \gamma^\bullet$ from the braid group action~\eqref{lemmaextracth} which is a special case of the half braiding, and the needed homotopy from $\gamma^\bullet$ to $\bar \gamma^\bullet$ from Lemma~\ref{lemmaextracth} \ref{lemmaextracth2} can once again be characterized by the fact that it becomes trivial when precomposed with the coaugmentation. With these ingredients, we can obtain $h$ via \eqref{eqngeth}.
			
			\end{itemize}
		These two observations imply: We can compute $h$ from \emph{any} injective resolution of $\mathbb{A}$ equipped with homotopy coherent half braiding. This will possibly change $h$ by a higher homotopy, but we are only interested in $h$ up to higher homotopy anyway.

		\begin{lemma}\label{lemmahHN}
		For the resolution $\frint_{X\in\Proj\cat{C}} X\otimes X^\vee$ of $\mathbb{A}$ and the half braiding 
		\eqref{eqnhalfbraidingcoh}, the topologically extracted homotopy $h:\gamma^\bullet \simeq \gbos$ is given by the homotopy
		\begin{align}
		\label{eqnhfinal}	h \ : \ \gamma^\bullet_* \ = \ \tikzfig{hfina} \ = \ \gbos \, , 
			\end{align}
		where the  frames indicate the areas that $H$ and $N$ are applied to. 
		\end{lemma}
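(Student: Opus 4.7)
The plan is to compute $h$ directly from the defining composite \eqref{eqngeth}, using the concrete model $\mathbb{A}^\bullet=\frint_{X\in\Proj\cat{C}} X\otimes X^\vee$ with its homotopy coherent half braiding \eqref{eqnhalfbraidingcoh}. The decisive observation is that every homotopy under consideration --- the topologically extracted $h$ itself, the product homotopy $H$ of Lemma~\ref{lemmacohhalfbraiding}~\ref{lemmacohhalfbraiding11}, and the naturality homotopy $N$ of Lemma~\ref{lemmacohhalfbraiding}~\ref{lemmacohhalfbraiding2} --- is essentially unique up to higher homotopy, the defining property being triviality upon precomposition with an appropriate coaugmentation of $\mathbb{A}$. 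The strategy is therefore to write down the candidate of \eqref{eqnhfinal}, verify the relevant trivialization properties by inspection, and conclude equality up to higher homotopy.

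I first unpack the composite in \eqref{eqngeth}. For a strict injective resolution of $\alg\in Z(\cat{C})$, the lax monoidal map $\cat{C}(I,\mathbb{A}^\bullet)^{\otimes 2}\to \cat{C}(I,(\mathbb{A}^\bullet)^{\otimes 2})$ is $B_2$-equivariant on the nose, and the subsequent postcomposition with $\cat{C}(I,U\catf{H})$ yields $h$ by Lemma~\ref{lemmaextracth}~\ref{lemmaextracth2}. In our homotopy coherent setting, however, the $B_2$-equivariance of the lax monoidal map is no longer strict but only holds up to the naturality homotopy $N$ of Lemma~\ref{lemmacohhalfbraiding}~\ref{lemmacohhalfbraiding3}; this is precisely the place where a morphism coming from one cochain is transported past the half braiding attached to the other copy of $\mathbb{A}^\bullet$. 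On the other side, the homotopy $H$ plays the role of $U\catf{H}$: comparing our resolution with an injective resolution of $\alg$ in $Z(\cat{C})$ via a chosen zigzag of equivalences identifies $H$ with $U\catf{H}$ through the common uniqueness characterization (triviality on the coaugmentation in the first slot).

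Combining these two steps produces a diagram that contains $N$ in the position of the lax monoidal step and $H$ in the position of the product comparison --- precisely the colored frames in \eqref{eqnhfinal}. The remaining data (the over/undercrossings, the placement of the caps and cups, and the positions of the identities) are then forced by the explicit formula \eqref{eqnhalfbraidingcoh} for the homotopy coherent half braiding, together with the description \eqref{defeqnhalfbraiding} of the non-crossing half braiding on $\mathbb{A}$. The main obstacle is the bookkeeping between the different rigidification choices for $H$, $N$ and $\bar\gamma^\bullet$, since a priori each is specified only up to higher homotopy. I would handle this by restricting the entire diagram along the coaugmentation $\iota_\mathbb{A}^{\otimes 2}:\mathbb{A}^{\otimes 2}\to (\mathbb{A}^\bullet)^{\otimes 2}$: on this subcomplex both $H$ and $N$ collapse to the zero self-homotopy by construction, so the claim reduces to the equality of two explicit morphisms in $\cat{C}$ built from the non-crossing half braiding and the multiplication $\gamma$, which is immediate from \eqref{eqngamma} and \eqref{defeqnhalfbraiding}. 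Uniqueness of $h$ up to higher homotopy then upgrades this restricted identity to the claimed equality \eqref{eqnhfinal}.
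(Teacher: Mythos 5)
Your overall route is close in spirit to the paper's: you transfer between the homotopy coherent resolution $\frint_{X\in\Proj\cat{C}}X\otimes X^\vee$ and a strict resolution $U\ialg$, let $N$ account for the failure of strict $B_2$-equivariance of the lax monoidal step and let $H$ stand in for $U\catf{H}$, exactly as the paper does when it defines $h_R$ by the formula \eqref{eqnhfinal} for \emph{every} resolution with homotopy coherent half braiding, uses that all such resolutions are equivalent, and then checks that for $R=U\ialg$ the homotopy $N$ is trivial so that $h_R$ reduces to \eqref{eqngeth}. The gap is in your concluding step. You appeal to an ``essential uniqueness of $h$ up to higher homotopy,'' characterized by triviality after precomposition with $\iota_\mathbb{A}^{\otimes 2}$, to upgrade an identity checked after restriction. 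No such uniqueness holds at the level where $h$ lives: a homotopy between $\gamma^\bullet_*$ and $\gbos$ as maps $\cat{C}(I,\mathbb{A}^\bullet)^{\otimes 2}\to\cat{C}(I,\mathbb{A}^\bullet)$ is far from unique up to higher homotopy --- two such homotopies differ by a degree-one cycle of the hom complex, and it is precisely this ambiguity that the Gerstenhaber bracket records; were homotopies essentially unique, the whole comparison would be content-free. The mechanism that makes the ``trivial on the coaugmentation'' characterizations work for $\catf{H}$, $H$ and $N$ is that precomposition with $\iota_\mathbb{A}^{\otimes n}$ (respectively $\iota_\mathbb{A}\otimes\id$) is a \emph{trivial fibration} of hom complexes in $\cat{C}$, by exactness of $\otimes$ and degreewise injectivity of $\mathbb{A}^\bullet$. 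After passing to $\cat{C}(I,-)$ this breaks down: $\cat{C}(I,\iota_\mathbb{A}):\cat{C}(I,\mathbb{A})\to\cat{C}(I,\mathbb{A}^\bullet)$ is not a quasi-isomorphism (its target computes $\Ext^*_\cat{C}(I,\mathbb{A})\cong HH^*(\cat{C})$), so restriction along it neither detects nor determines homotopies. Indeed, after your restriction both the candidate and the topological homotopy collapse to the zero self-homotopy of $\iota_\mathbb{A}\circ\gamma$, so the ``restricted identity'' carries no information; the upgrading is exactly the missing content.

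The legitimate rigidification is the one the paper performs: either argue as in its proof of this lemma --- the formula \eqref{eqnhfinal} defines $h_R$ for any $R$ with homotopy coherent half braiding, all such $R$ (and hence all $h_R$) are equivalent, and for $R=U\ialg$ one has $N=0$ so that \eqref{eqnhfinal} visibly reduces to \eqref{eqngeth} --- or, as in Lemma~\ref{lemmahish}, keep one tensor slot in $\cat{C}$ and compare homotopies of maps $\mathbb{A}^\bullet\otimes\cat{C}(I,\mathbb{A}^\bullet)\to\mathbb{A}^\bullet$, where precomposition with $\iota_\mathbb{A}\otimes\id$ \emph{is} a trivial fibration and the ``trivial on the coaugmentation'' characterization is valid, applying $\cat{C}(I,-)$ only at the very end.
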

	
	\begin{proof}
		For \emph{any} injective resolution  $R=(\mathbb{A}^\bullet,c)$ of $\mathbb{A}$ with homotopy coherent half braiding, one obtains the homotopies $H$ and $N$ that we can use to associate to $R$ the homotopy $h_R$ via the formula~\eqref{eqnhfinal}.
		But all of such injective resolutions with homotopy coherent half braidings are equivalent as explained, and hence so are the $h_R$.
		Now it remains to verify that $h_R$ with $R=U\ialg$ reduces to the homotopy from \eqref{eqngeth} from Lemma~\ref{lemmaextracth} \ref{lemmaextracth2}. But this is true because the homotopy $N$, in this case, happens to be trivial since we have an \emph{actual} half braiding.
		\end{proof}
	
	With the following Lemma,
	we complete step~\ref{stepDii}:
	
	\begin{lemma}\label{lemmahish}
		The homotopy $h: \gamma^\bullet_*\simeq \gbos$ from Lemma~\ref{lemmah} given for the injective resolution
		$\mathbb{A}^\bullet = \frint_{X\in\Proj\cat{C}} X\otimes X^\vee$
		agrees up to higher homotopy 
		with the topologically extracted homotopy $\gamma_*^\bullet \simeq \gbos$
		 of the $E_2$-algebra $\cat{C}(I,\mathbb{A}^\bullet)$.
	\end{lemma}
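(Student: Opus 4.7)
The plan is to invoke the essential uniqueness statements built into Lemma~\ref{lemmaextracth} and Lemma~\ref{lemmacohhalfbraiding}: both the multiplicative homotopy $H:\gamma^\bullet\simeq \gamma^\bullet\circ c_{\mathbb{A}^\bullet,\mathbb{A}^\bullet}$ and the naturality homotopy $N$ of the homotopy coherent half braiding~\eqref{eqnhalfbraidingcoh} are determined, up to higher homotopy, by the condition that they trivialize after restriction along the coaugmentation $\iota_\mathbb{A}:\mathbb{A}\to\mathbb{A}^\bullet$. It therefore suffices to exhibit \emph{one} explicit pair $(\tilde H,\tilde N)$ of homotopies on the resolution $\mathbb{A}^\bullet=\frint_{X\in\Proj\cat{C}} X\otimes X^\vee$ such that (a) both $\tilde H$ and $\tilde N$ become trivial after restriction along $\iota_\mathbb{A}$, and (b) when $\tilde H$ and $\tilde N$ are inserted into the graphical composite~\eqref{eqnhfinal} and then into~\eqref{eqngeth}, the resulting homotopy on $\cat{C}(I,\mathbb{A}^\bullet)$ matches~\eqref{eqnhsigns} term by term.

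To construct $\tilde N$, I would exploit that the homotopy coherent half braiding~\eqref{eqnhalfbraidingcoh} threads an identity strand through each of the $p$ powering boxes of a $p$-cochain. A natural chain homotopy between the two sides of the naturality square is obtained by sliding the morphism labelled $?$ past a single box at a time and forming an alternating sum over which box it crosses. By construction, $\tilde N$ vanishes after coaugmentation, because the restricted resolution has no powering boxes left to slide across.

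To construct $\tilde H$, I would observe that $\gamma^\bullet$ is realized by the partial composition over $X_p$ at the last slot of the $p$-cochain, whereas $\gamma^\bullet\circ c_{\mathbb{A}^\bullet,\mathbb{A}^\bullet}$ places the $q$-cochain into position $0$. One interpolates between these two extremes via the $p$ intermediate insertions $h_i^{p,q}$ from~\eqref{eqnmathbfh} for $i=0,\dots,p-1$, assembled with the signs appearing in~\eqref{eqnhsigns}. After coaugmentation in the first slot, the $p$-cochain reduces to a single coevaluation and every intermediate insertion degenerates to the ordinary product, since the non-crossing half braiding is an \emph{actual} half braiding on $\mathbb{A}$ itself, so $\gamma\circ c_{\mathbb{A},\mathbb{A}}=\gamma$; this gives condition~(a) for $\tilde H$. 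A verification in spirit identical to the calculation carried out in the proof of Lemma~\ref{lemmah} shows that $\tilde H$ is indeed a chain homotopy between $\gamma^\bullet$ and $\gamma^\bullet\circ c_{\mathbb{A}^\bullet,\mathbb{A}^\bullet}$.

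The remaining task, and the main obstacle, is the component-wise sign matching required by condition~(b). The factor $(-1)^{(p-1-i)q}$ in~\eqref{eqnhsigns} should arise as the Koszul sign incurred when $\tilde N$ threads the degree-$q$ factor $\mathbb{A}^{\otimes q}$ past the remaining $p-1-i$ powering boxes of the $p$-cochain, while the factor $(-1)^i$ records the position of the interpolator in $\tilde H$. Once the components of $\tilde H$ and $\tilde N$ are written out on a fixed tuple $(X_0,\dots,X_{p+q-1})$ of projective objects and plugged into~\eqref{eqnhfinal}, a direct inspection yields precisely the summand $(-1)^{i+(p-1-i)q}h_i^{p,q}$ on that component. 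By the essential uniqueness recalled at the outset, this identifies the topologically extracted homotopy with $h$ from Lemma~\ref{lemmah}, up to higher homotopy.
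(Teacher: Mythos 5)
Your overall strategy -- characterize the relevant homotopies by the property of becoming trivial after restriction along the coaugmentation, then exhibit one explicit model and match it against~\eqref{eqnhsigns} -- is exactly the right philosophy, and it is close in spirit to what the paper does. The genuine gap is in the construction of $\tilde H$. You propose to build a homotopy $\tilde H:\gamma^\bullet\simeq\gamma^\bullet\circ c_{\mathbb{A}^\bullet,\mathbb{A}^\bullet}$, i.e.\ a homotopy between morphisms $\mathbb{A}^\bullet\otimes\mathbb{A}^\bullet\to\mathbb{A}^\bullet$ in (complexes over) $\cat{C}$, out of the insertion operations $h_i^{p,q}$. But those operations only exist once the second tensor factor has been converted into an honest cochain: filling a blank box of the first argument requires an element of $\cat{C}(Y_q,Y_0)$, which one obtains from a morphism $I\to Y_0\otimes Y_q^\vee$ by duality, but which the \emph{object} $Y_0\otimes Y_q^\vee$ does not canonically provide. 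There is no map $\mathbb{A}^p\otimes\mathbb{A}^q\to\mathbb{A}^{p+q-1}$ in $\cat{C}$ implementing "insert the second factor into box $i$", so $\tilde H$ as described does not type-check. Relatedly, your verification of condition~(a) for $\tilde H$ argues that after coaugmentation the insertions "degenerate to the ordinary product" because $\gamma\circ c_{\mathbb{A},\mathbb{A}}=\gamma$; but the required property is that the \emph{homotopy itself} restricts to the zero self-homotopy, not merely that its endpoints coincide -- the correct mechanism is that the restricted sum is empty (in the paper's notation, $L^{0,q}=0$).

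The paper sidesteps precisely this issue by never modelling $H$ and $N$ separately. It regards the whole composite~\eqref{eqnhfinal} as a single homotopy $L$ between the partially evaluated maps $\xi,\bar\xi:\mathbb{A}^\bullet\otimes\cat{C}(I,\mathbb{A}^\bullet)\to\mathbb{A}^\bullet$, observes that precomposition with $\iota_\mathbb{A}\otimes\id$ is a trivial fibration, so that $L$ is essentially unique among homotopies $\xi\simeq\bar\xi$ vanishing along the coaugmentation, and then writes down the explicit model $L^{p,q}=\sum_{i=0}^{p-1}(-1)^{i+(p-1-i)q}L_i^{p,q}$ of~\eqref{defeqnL}, in which the second argument \emph{is} a cochain $\psi\in\cat{C}(I,\mathbb{A}^q)$, so the insertions are defined; the homotopy identity is checked by the same computation as in Lemma~\ref{lemmah}, the vanishing along the coaugmentation is the statement $L^{0,q}=0$, and applying $\cat{C}(I,-)$ returns the formula~\eqref{eqnhsigns}. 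If you recast your explicit model at this partially evaluated level (your $\tilde N$-type sliding terms and the sign bookkeeping then reappear inside the $L_i^{p,q}$), your argument becomes correct and essentially coincides with the paper's proof; as it stands, the separate model for $H$ on $\mathbb{A}^\bullet\otimes\mathbb{A}^\bullet$ is the missing ingredient and cannot be supplied in the form you describe.
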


	\begin{proof}
		For the entire proof, we fix the injective resolution
		$\mathbb{A}^\bullet = \frint_{X\in\Proj\cat{C}} X\otimes X^\vee$.
		Rephrasing Lemma~\ref{lemmahHN}, the topologically extracted homotopy
		$h$ is obtained by applying $\cat{C}(I,-)$
		to the homotopy $L$ of maps $\xi,\bar \xi: \mathbb{A}^\bullet \otimes \cat{C}(I,\mathbb{A}^\bullet) \to \mathbb{A}^\bullet$
		defined by
		\begin{align}
			L \ :\ \quad  \xi  \ \ :=\
			\tikzfig{anotherL} \ =: \ \bar \xi \ . 
		\end{align}
		In short,
		\begin{align} h=\cat{C}(I,L)\ . \label{eqnhandL}
			\end{align}
		If we precompose with the coaugmentation,
		$L$ becomes trivial (this follows for $H$ from Lemma~\ref{lemmacohhalfbraiding} \ref{lemmacohhalfbraiding11} and for $N$ by from Lemma~\ref{lemmacohhalfbraiding} \ref{lemmacohhalfbraiding2}). 
		But
		\begin{align}
			\cat{C}(\mathbb{A}^\bullet \otimes \cat{C}(I,\mathbb{A}^\bullet), \mathbb{A}^\bullet)   \ra{  (\iota_\mathbb{A} \otimes \id_{ \cat{C}(I,\mathbb{A}^\bullet) }  )^*   }
			\cat{C}(\mathbb{A} \otimes \cat{C}(I,\mathbb{A}^\bullet), \mathbb{A}^\bullet) \ . 
		\end{align}
	is again a trivial fibration which implies that $L$ is the essentially unique homotopy $\xi\simeq \bar \xi$ that becomes trivial when precomposed with the coaugmentation. 
		This allows us to give a model for $L$:
		First we define $L_i^{p,q}:\mathbb{A}^p \otimes \cat{C}(I,\mathbb{A}^q)\to\mathbb{A}^{p+q-1}$
		through
		\begin{align}L_i^{p,q}(-\otimes\psi) = \tikzfig{L}  \quad \text{for}\quad \psi\in\cat{C}(I,\mathbb{A}^q) \ .    \label{eqnmathbfA2}
		\end{align}
		The operations $L_i^{p,q}$ are binary, and
		the boxes and dotted boxes
		are associated to the first and second argument, respectively.
		Next we set \begin{align}
			\label{defeqnL}L^{p,q} := \sum_{i=0}^{p-1} (-1)^{i+(p-1-i)q} L_i^{p,q} : \mathbb{A}^p \otimes \cat{C}(I,\mathbb{A}^q)\to\mathbb{A}^{p+q-1}\ . \end{align}
		Then 
		\begin{align} L\text{d}+\text{d}L=\bar\xi-\xi \ , \end{align}
		i.e.\
		$L$ is a homotopy from $\xi$ to $\bar\xi$.
			This can be confirmed with essentially the same computation	as for Lemma~\ref{lemmah}.
		In order to verify that this homotopy really models $L$, we need to verify that it vanishes when precomposed with the coaugmentation.
		But this follows from $L^{0,q}=0$.

		Now we can use the model~\eqref{defeqnL} to compute $h$ via \eqref{eqnhandL}.
		This gives us exactly the formula for $h$ in Lemma~\ref{lemmah}.
		\end{proof}

\subparagraph{\normalfont \textsc{Step}~\ref{stepDiii}.}	
We can now finally compute algebraically the Gerstenhaber bracket of the $E_2$-algebra $\cat{C}(I,\mathbb{A}^\bullet)$.
So far, we obtained
for the $E_2$-algebra $\cat{C}(I,\mathbb{A}^\bullet)$
the homotopy between multiplication and opposite multiplication coming from the path in $E_2(2)$ given in Figure~\ref{figE22}.
The key technical ingredient for this is Lemma~\ref{lemmahish} that tells us that the homotopy $h$ concretely defined via~\eqref{eqnhsigns} gives us a model for this homotopy.

Finally, in this step, we want to compute the Gerstenhaber bracket for the $E_2$-algebra
$\cat{C}(I,\mathbb{A}^\bullet)$.
To this end, we compute the
binary degree one operation $b$ corresponding to 
the fundamental class of
$E_2(2)\simeq \mathbb{S}^1$ (the orientation 
comes here from preferring the braiding over its inverse). From this operation $b$ and an additional sign, we obtain the Gerstenhaber bracket as we will explain in a moment.

We can obtain the loop in $E_2(2)\simeq \mathbb{S}^1$
corresponding to the fundamental class by composing two half circular paths.
We have established that the homotopy $h$ is the evaluation of the $E_2$-algebra $\cat{C}(I,\mathbb{A}^\bullet)$ on the first of these half circles, at least up to higher homotopy.
As a consequence,
 the binary degree one operation $b$
is the composition $ h+ h\tau$ of the homotopy $h$ from $\gamma_*^\bullet$
to $\gbos$ and
 the homotopy $ h \tau$ from
 $\gbos$ to $\gamma_*^\bullet$,
where $\tau$ is the symmetric braiding in $\Ch$:
\begin{align}
	b (\varphi,\psi) = h(\varphi,\psi) + (-1)^{pq} h(\psi,\varphi) \quad \text{for}\quad \varphi \in \cat{C}(I,\mathbb{A}^p)\ , \ \psi \in \cat{C}(I,\mathbb{A}^q) \ .    \label{eqnbformula}    
	\end{align}
 The connection to the Gerstenhaber bracket is $[\varphi,\psi]=(-1)^{p}b(\varphi,\psi)$ \cite[page~220]{salvatorewahl}
 (this additional sign ensures the anti-symmetry of the Gerstenhaber bracket),
 which leads us to
 \begin{align}
 	[\varphi,\psi]=  (-1)^p h(\varphi,\psi) + (-1)^{pq+p} h(\psi,\varphi) \ .    \label{eqnbrcketonhom}
 	\end{align}
 
 Under order to express compactly
 the Gerstenhaber bracket induced on Hochschild cohomology 
  under the equivalence $\rint_{X\in\Proj \cat{C}}    \cat{C}(X,X)\simeq \cat{C}(I,\mathbb{A}^\bullet)$, 
  recall 
  the $i$-th partial composition operation
  \begin{align} \circ_i : \left(\rint_{X\in\Proj \cat{C}}    \cat{C}(X,X)\right)^p\otimes  \left(\rint_{X\in\Proj \cat{C}}    \cat{C}(X,X)\right)^q&\to  \left(\rint_{X\in\Proj \cat{C}}    \cat{C}(X,X)\right)^{p+q-1} \ , \quad 0\le i\le p-1 \\ 
  	\alpha\otimes\beta & \mapsto \alpha \circ_i \beta \ , 
  \end{align}
  where 
  \begin{align}
  	(\alpha \circ_i \beta)_{X_0,\dots,X_{i-1},Y_0,\dots,Y_q,X_{i+1},\dots,X_p} := \alpha_{X_0,\dots,X_{i-1},Y_0,Y_q,X_{i+1},\dots,X_p} (-, \beta_{Y_0,\dots,Y_q} , - ) \ . 
  \end{align}
The operations $\circ_i$ are used to define the circle product in the sense of \cite[Definition~1.4.1]{witherspoon}
\begin{align}
	\alpha \circ \beta := \sum_{i=0}^{p-1} (-1)^{(q-1)i} \alpha \circ_i \beta \ , \quad |\alpha|=p\ , \quad  |\beta|=q \ . 
\end{align}
  
  \begin{lemma}\label{lemmagerstenhaberbracket}
  	Under the equivalence, $\rint_{X\in\Proj \cat{C}}    \cat{C}(X,X)\simeq \cat{C}(I,\mathbb{A}^\bullet)$
  	the Gerstenhaber bracket of $\cat{C}(I,\mathbb{A}^\bullet)$ translates to
  	the bracket
  	\begin{align}
  		[\alpha,\beta]  = - (-1)^{(p-1)(q-1)} \alpha \circ \beta+\beta \circ \alpha  \ , \label{eqngerstenhaberbracket}
  	\end{align}
  where $\alpha$ and $\beta$ are in degree $p$ and $q$, respectively.
  \end{lemma}

With our sign conventions, \eqref{eqngerstenhaberbracket}
is the `standard' Gerstenhaber bracket on Hochschild cohomology (we comment on other conventions in Remark~\ref{remconv}).
This finishes the proof that the $E_2$-algebra $\bspace{\cat{C}(I,\mathbb{A}^\bullet)}{\gamma_*^\bullet}$ is a (very explicit) solution to Deligne's Conjecture.

 \begin{proof}[\slshape Proof of Lemma~\ref{lemmagerstenhaberbracket}]
 	Under the equivalence 	$\rint_{X\in\Proj \cat{C}}    \cat{C}(X,X)\simeq \cat{C}(I,\mathbb{A}^\bullet)$,
  the part $h_i^{p,q}$ of the homotopy $h$ from step~\ref{stepDi}, \eqref{eqnmathbfh} (here $p,q\ge 0$ and $0\le i\le p-1$) translates to the partial composition operation $\circ_i$.
Hence, the
homotopy $h$  on $\cat{C}(I,\mathbb{A}^\bullet)$ from \eqref{eqnhsigns} translates to the degree one operation on $\rint_{X\in\Proj \cat{C}}    \cat{C}(X,X)$ sending $\alpha$ in degree $p$ and $\beta$ in degree $q$
to
\begin{align}
	\sum_{i=0}^{n-1} (-1)^{i+(p-1-i)q} \alpha \circ_i \beta = (-1)^{pq+q} \alpha \circ \beta  \ . \label{eqncircop}
	\end{align}
But then the
  bracket~\eqref{eqnbrcketonhom} translates to the bracket
\begin{align}
	[\alpha,\beta] &= 
	(-1)^{p+pq+q} \alpha \circ \beta +\beta \circ \alpha = - (-1)^{(p-1)(q-1)} \alpha \circ \beta+\beta \circ \alpha  \ . 
	\end{align}
\end{proof}

\begin{remark}\label{remconv}
	In many places in the literature, including the textbook \cite{witherspoon},
	a different convention for the cup product is used.
	This alternative cup product $\smile'$ relates to ours by $\alpha \smile' \beta = (-1)^{pq} \alpha \smile \beta$ with $p=|\alpha|$ and $q=|\beta|$.  
	For us, this convention would be a bad choice because it does not turn $\smile'$ into a chain map (only up to a sign), but it can be convenient for other purposes. If we want to obtain the bracket associated to $\smile'$ rather than $\smile$, we need to multiply the homotopy $h$ above also degree-wise with $(-1)^{pq}$. This entails that $b$ on $\cat{C}(I,\mathbb{A}^\bullet)$ must also be multiplied with $(-1)^{pq}$, thereby giving us $b'$. But
	\begin{align}
		b' (\varphi,\psi) = (-1)^{pq} b(\varphi,\psi) = b(\psi,\varphi) \quad \text{for}\quad \varphi \in \cat{C}(I,\mathbb{A}^p)\ , \ \psi \in \cat{C}(I,\mathbb{A}^q)     \label{eqnbformula2}    
	\end{align}
because $b$ is graded commutative by definition. 
This changes the bracket that we obtain on $\rint_{X\in\Proj \cat{C}}    \cat{C}(X,X)$
to
\begin{align}
	[\alpha,\beta]' &=   (-1)^{p+pq+q} \beta\circ \alpha+\alpha\circ \beta = \alpha \circ \beta - (-1)^{(p-1)(q-1)} \beta \circ \alpha \ . 
\end{align}
This agrees now with \cite[Definition~1.4.1]{witherspoon}.
	\end{remark}

\begin{remark}\label{remsharp}
	By considering for $\cat{C}$ the modules over the group algebra of a finite group,
	we deduce from Theorem~\ref{thhmcomparisondeligne} and the computations in
\cite{lezhou}
that the Gerstenhaber bracket
of the $E_2$-algebras constructed from the homotopy invariants of a braided commutative algebra is generally non-trivial.
\end{remark}

\begin{example}[Quantum groups]\label{exquantumgroups}
	Let $H$ be a finite-dimensional Hopf algebra with antipode $S:H\to H$.
	 Then the category $\cat{C}$ of a finite-dimensional $H$-modules is a finite tensor category and its canonical end $\mathbb{A}$ is isomorphic to the adjoint representation $H_\text{ad}$ \cite[Theorem~7.4.13]{kl}, i.e.\ $H$ with action $x.y = x' y S(x'')$ for $x,y \in H$, where $\Delta x = x' \otimes x''$ is the Sweedler notation for the coproduct.
	 With~\eqref{eqnAHH}, we can now write the Hochschild cohomology of $H$ as $\Ext_H^*(k,H_\text{ad})$, see \cite[Section~2.2]{bichon}.  
	 The multiplication of $H$ is ad-equivariant and therefore endows $H_\text{ad}$ with structure of an algebra in $H$-modules. This gives us the multiplicative structure on Hochschild cohomology. If $H$ is a small quantum group $\catf{u}_q(\mathfrak{sl}_2)$ at a primitive root of unity, the entire multiplicative structure on the Hochschild cohomology is known  as graded ring \cite{lq}. What Theorem~\ref{thhmcomparisondeligne} tells us in the case of categories of $H$-modules is how the Gerstenhaber structure on $HH^*(H)$ is determined by the canonical structure of $H_\text{ad}$ as an object in the Drinfeld center of the category of $H$-modules, i.e.\ its structure as a module over the quantum double $D(H)$ and its structure as a braided commutative algebra in $D(H)$-modules. This is of course
	 just the informal summary; the actual construction is the abstract one from Theorem~\ref{thmE2derivedhom} --- and as we have seen, the actual proof that this produces the `usual' Gerstenhaber bracket is quite involved (and the main point of this article). 
	 Let us emphasize again that the motivation behind writing 
	 the Gerstenhaber structure on Hochschild cohomology (or rather the $E_2$-structure on the Hochschild cochains) is (at least for now) \emph{not} the computation of Gerstenhaber brackets in examples. It is rather about condensing all of the complicated differential graded information into one non-differential graded braided commutative algebra. 
	 This is exactly the description that we need for our study of the differential graded Verlinde algebra in \cite{vd}. 
	\end{example}

Theorem~\ref{thhmcomparisondeligne}
allows for a generalization to exact module categories:
For a finite tensor category $\cat{C}$, an \emph{exact (left) module category $\cat{M}$} \cite{etingofostrik}
is a finite category together with structure of a left module $\otimes : \cat{C}\boxtimes \cat{M}\to\cat{M}$ over $\cat{C}$
 such that $P\otimes M$ is projective for $P\in\Proj\cat{C}$ and $M\in\cat{M}$. 
If we denote by $[-,-]:\cat{M}\boxtimes\cat{M}\to\cat{C}$ the internal hom of the module category $\cat{M}$, one may define the object $\mathbb{A}_\cat{M}:=\int_{M\in\cat{M}} [M,M] \in \cat{C}$. This object is an algebra in $\cat{C}$ and lifts in fact to a braided commutative algebra $\alg_\cat{M}$ in $Z(\cat{C})$ \cite[Theorem~4.9]{shimizucoend}. The object $\mathbb{A}_\cat{M}$ allows to express the Hochschild cochains of $\cat{M}$ as $\rint_{M\in\Proj\cat{M}} \cat{M}(M,M)\simeq \cat{C}(I,\mathbb{A}_\cat{M}^\bullet)$; 
this is \cite[Corollary~7.5]{shimizucoend} in a slightly different language (this is the point where exactness of $\cat{M}$ is needed). After implementing the needed changes to Theorem~\ref{thhmcomparisondeligne} and its proof, we arrive at the following generalization: 
	
\begin{theorem}
	For any exact module category $\cat{M}$
	over a finite tensor category $\cat{C}$,
	the algebra structure on the canonical end 
	$\mathbb{A}_\cat{M}=\int_{M\in\cat{M}} [M,M] \in \cat{C}$ induces  an $E_2$-algebra structure on the 
	derived morphism space $\cat{C}(I,\mathbb{A}^\bullet_\cat{M})$. 
	Under the equivalence $\cat{C}(I,\mathbb{A}^\bullet_\cat{M})\simeq \rint_{M\in \Proj \cat{M}} \cat{M}(M,M)$, this $E_2$-structure
	provides a solution to Deligne's Conjecture in the sense that it induces the standard Gerstenhaber structure on the Hochschild cohomology of $\cat{M}$.
	\end{theorem}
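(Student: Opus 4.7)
The plan is to follow the pattern of Theorem~\ref{thhmcomparisondeligne} with $\cat{M}$ in place of $\cat{C}$ as a module category over itself, replacing the dualities by the internal hom $[-,-]:\bar{\cat{M}}\boxtimes\cat{M}\to\cat{C}$ of the exact module category. First, the $E_2$-structure itself is immediate: by \cite[Theorem~4.9]{shimizucoend} the object $\mathbb{A}_\cat{M}=\int_{M\in\cat{M}}[M,M]$ carries a canonical lift to a braided commutative algebra $\alg_\cat{M}\in Z(\cat{C})$, so Theorem~\ref{thmE2derivedhom} directly produces an $E_2$-algebra structure on $\cat{C}(I,\mathbb{A}_\cat{M}^\bullet)$. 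What must be checked is that, under the equivalence $\cat{C}(I,\mathbb{A}_\cat{M}^\bullet)\simeq \rint_{M\in\Proj\cat{M}}\cat{M}(M,M)$ of \cite[Corollary~7.5]{shimizucoend}, the induced Gerstenhaber structure is the standard one.

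I would first produce an explicit injective resolution of $\mathbb{A}_\cat{M}$ mirroring Lemma~\ref{lemmaresA}. Replace $X\otimes X^\vee$ by $[M,M]$ throughout and define
\begin{align}
\frint_{M\in\Proj \cat{M}} [M,M] := \rint_{M\in\cat{F}} [M,M]
\end{align}
for a finite full subcategory $\cat{F}\subset\Proj\cat{M}$ containing a projective generator. Using that $\cat{M}$ is exact, hence $\Proj\cat{M}=\Inj\cat{M}$, and that $[-,-]$ takes projective arguments to injective objects of $\cat{C}$ (because $\Proj\cat{C}\otimes\cat{M}\subset\Proj\cat{M}$ by exactness), the same dualization/adjunction trick used for $\mathbb{F}$ shows this is a well-defined injective resolution. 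The product of $\mathbb{A}_\cat{M}$ coming from composition of internal homs lifts to a chain-level multiplication $\gamma^\bullet$ on this resolution by a formula analogous to \eqref{eqndefgamma} using the composition map $[M,N]\otimes[N,M']\to[M,M']$. An exact repeat of Proposition~\ref{propcocomplex}, with the adjunction $\cat{C}(I,[M,N])\cong\cat{M}(M,N)$ in place of $\cat{C}(I,X\otimes Y^\vee)\cong\cat{C}(Y,X)$, identifies $\bspace{\rint_{M\in\Proj\cat{M}}\cat{M}(M,M)}{\smile}$ with $\bspace{\cat{C}(I,\mathbb{A}_\cat{M}^\bullet)}{\gamma^\bullet}$ as differential graded algebras.

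Next I would transplant Steps \ref{stepDi}--\ref{stepDiii} of the proof of Theorem~\ref{thhmcomparisondeligne}. The non-crossing half braiding from Shimizu's construction again admits a homotopy coherent extension $c_{\mathbb{A}_\cat{M}^\bullet,-}$ characterized, up to a contractible choice, by its restriction to $\mathbb{A}_\cat{M}$, so the analogues of Lemmas~\ref{lemmacohhalfbraiding}, \ref{lemmahHN}, \ref{lemmaextracth} go through verbatim. For the explicit algebraic homotopy between $\gamma_*^\bullet$ and $\gbos$, define
\begin{align}
h_i^{p,q}:\cat{C}(I,\mathbb{A}_\cat{M}^p)\otimes\cat{C}(I,\mathbb{A}_\cat{M}^q)\to \cat{C}(I,\mathbb{A}_\cat{M}^{p+q-1})
\end{align}
by inserting the second argument as the $i$-th composition factor via the composition
$[M,N]\otimes[N,M']\to[M,M']$, exactly as in \eqref{eqnmathbfh} but with the internal hom in place of $X\otimes X^\vee$, and assemble the signed sum as in \eqref{eqnhsigns}. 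The same bookkeeping of face maps as in Lemma~\ref{lemmah} verifies $h\dif+\dif h = \gbos - \gamma_*^\bullet$, and the argument of Lemma~\ref{lemmahish} --- which only used the universal characterization of the homotopy coherent half braiding and the resulting essential uniqueness of the homotopy trivializing under the coaugmentation --- then identifies this $h$ with the topologically extracted one up to higher homotopy. Extracting the Gerstenhaber bracket from $b(\varphi,\psi)=h(\varphi,\psi)+(-1)^{pq}h(\psi,\varphi)$ and computing as in \eqref{eqncircop}--\eqref{eqngerstenhaberbracket} gives the standard bracket on $HH^*(\cat{M})$.

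The main obstacle is purely bookkeeping: verifying that every ingredient of the graphical calculus for $\mathbb{A}=\int X\otimes X^\vee$ used in Step~\ref{stepDii} (the homotopy coherent half braiding \eqref{eqnhalfbraidingcoh}, its naturality homotopy $N$, and the homotopy $H$ relating $\gamma^\bullet$ to its opposite) admits a faithful translation to the internal hom setting. This is where exactness of $\cat{M}$ really enters, via the preservation of injective resolutions by $[-,M]$ and by tensoring, ensuring both that $\frint_{M\in\Proj\cat{M}}[M,M]$ is a genuine injective resolution and that the homotopy coherent half braiding is essentially unique relative to its restriction to $\mathbb{A}_\cat{M}$. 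Granted this dictionary, no new argument is needed, and the theorem follows.
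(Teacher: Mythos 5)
Your proposal matches the paper's own treatment: the paper likewise obtains the braided commutative lift of $\mathbb{A}_\cat{M}$ from \cite[Theorem~4.9]{shimizucoend}, identifies the Hochschild cochains of $\cat{M}$ with $\cat{C}(I,\mathbb{A}_\cat{M}^\bullet)$ via \cite[Corollary~7.5]{shimizucoend}, and then simply transplants the proof of Theorem~\ref{thhmcomparisondeligne} with $[M,M]$ replacing $X\otimes X^\vee$, which is exactly the dictionary you spell out. Only a cosmetic caveat: injectivity of $[M,M]$ for $M\in\Proj\cat{M}$ is cleaner to justify via the adjunction of $[M,-]$ to the exact functor $-\otimes M$ together with $\Proj\cat{M}$ coinciding with the injectives, rather than the parenthetical you give, but this does not affect the argument.
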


Considering $\cat{C}$ as an exact module category over itself, the above results specializes to Theorem~\ref{thhmcomparisondeligne}.

\spaceplease
\section{The self-extension algebra of a finite tensor category and the Farinati-Solotar Gerstenhaber bracket}
Our  results, in particular Theorem~\ref{thmE2derivedhom} and Theorem~\ref{thhmcomparisondeligne},
allow us to generalize some results in the homological algebra of finite tensor categories (and to simplify the proofs of the existing results).

One of the key homological algebra quantities of a finite tensor category $\cat{C}$ is the \emph{self-extension algebra} $\Ext_\cat{C}^*(I,I)$ of the unit $I$ which was studied, in the framework of finite tensor categories,  by Etingof and Ostrik \cite{etingofostrik}, see \cite{negronplavnik} for an overview.
It is known that $\Ext_\cat{C}^*(I,I)$ is graded commutative. If $\cat{C}$ is the category of finite-dimensional representation of a finite group $G$,
$\Ext_\cat{C}^*(I,I)$ is the group cohomology ring $H^*(G;k)$.
For certain small quantum groups, the Ext algebra was computed by Ginzburg and Kumar \cite{gk}.

If $\cat{C}$ is given by the category of finite-dimensional modules over a finite-dimensional Hopf algebra, Farinati and Solotar
\cite{farinatisolotar}
have given a Gerstenhaber bracket on the self-extension algebra, see also \cite{hermann} for a discussion of the inclusion $\Ext_\cat{C}^*(I,I) \to  HH^*(\cat{C})$ of algebras. 
The appearance of the Farinati-Solotar bracket comes, as we will show below,
 from the fact that under mild conditions the derived endomorphisms $\cat{C}(I,I^\bullet)$ of the unit of a tensor category  actually form an $E_2$-algebra. A similar argument is given, albeit in a non-linear setting,  by Kock and Toën in \cite{kocktoen} in terms of weak 2-monoids and discussed in terms of $B_\infty$-algebras by Lowen and van den Bergh in  \cite{Binfinity}.
Theorem~\ref{thmE2derivedhom} allows us to give a direct proof for the $E_2$-structure on $\cat{C}(I,I^\bullet)$:

\begin{corollary}\label{corcohomftc}
	Let $\cat{C}$ be a finite tensor category. 
	The self-extension algebra
	$\cat{C}(I,I^\bullet)$  carries the structure of an $E_2$-algebra
	that after taking cohomology induces the Farinati-Solotar Gerstenhaber bracket.
	With this $E_2$-structure, there 
	is a canonical map
	\begin{align}\label{eqnExtHochschild}
	\cat{C}(I,I^\bullet)\to \rint_{X\in \Proj\cat{C}}\cat{C}(X,X)  
	\end{align} to the Hochschild cochain complex of $\cat{C}$ equipped with the usual $E_2$-structure. This map is a map of $E_2$-algebras. After taking cohomology, it induces a monomorphism \begin{align}\label{eqnExtHochschild2}
	\Ext_\cat{C}^*(I,I) \to HH^*(\cat{C}) \end{align}
	of Gerstenhaber algebras (with suitable models, it is also a monomorphism at cochain level).
	\end{corollary}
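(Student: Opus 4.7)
\emph{Proof plan.} The $E_2$-algebra structure on $\cat{C}(I,I^\bullet)$ is obtained by specializing Theorem~\ref{thmE2derivedhom} to $\mathbb{T}=I$: the monoidal unit is trivially an algebra in $\cat{C}$, and its canonical lift is the monoidal unit $I\in Z(\cat{C})$, which is (trivially) braided commutative. This is the $E_2$-structure on $\cat{C}(I,I^\bullet)$ that I will use throughout.

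For the map \eqref{eqnExtHochschild}, I would invoke Proposition~\ref{propnaturality}. The algebra unit $\eta_\mathbb{A}:I\to\mathbb{A}$, characterized by $\pi_X\circ\eta_\mathbb{A}=b_X$ for every structure map $\pi_X:\mathbb{A}\to X\otimes X^\vee$ of the end, is an algebra map in $\cat{C}$; it lifts to the algebra unit $\eta_\alg:I\to \alg$ in $Z(\cat{C})$, where $\alg$ is braided commutative by Lemma~\ref{lemmadmno}. Proposition~\ref{propnaturality} therefore produces a map of $E_2$-algebras $\cat{C}(I,I^\bullet)\to\cat{C}(I,\mathbb{A}^\bullet)$, and composing with the equivalence of Proposition~\ref{propcocomplex} yields \eqref{eqnExtHochschild}. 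By Theorem~\ref{thhmcomparisondeligne} the target carries the standard Deligne $E_2$-structure on Hochschild cochains, so this is a map of $E_2$-algebras over the standard model.

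To show that \eqref{eqnExtHochschild2} is a monomorphism, I exploit that the end structure map $\pi_I:\mathbb{A}\to I$ is a retraction of $\eta_\mathbb{A}$ --- this is precisely the triangle identity for the adjunction $U\dashv R$, since $\pi_I$ is the counit $\epsilon_I:UR(I)\to I$. Hence $I$ is a direct summand of $\mathbb{A}$ in $\cat{C}$, and I can choose injective resolutions compatibly so that $\mathbb{A}^\bullet\cong I^\bullet\oplus K^\bullet$. Applying $\cat{C}(I,-)$ turns \eqref{eqnExtHochschild} into a split monomorphism at cochain level, and this split passes to cohomology to yield the injection \eqref{eqnExtHochschild2}.

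The step I expect to be the main obstacle is the identification of the induced Gerstenhaber bracket on $\Ext^*_\cat{C}(I,I)$ with the Farinati-Solotar bracket. In the Hopf-algebraic case, I would appeal to the result of Hermann~\cite{hermann} that the Farinati-Solotar bracket is precisely the restriction of the Hochschild Gerstenhaber bracket along \eqref{eqnExtHochschild2}; since our map is a map of $E_2$-algebras by the preceding steps, the induced bracket on $\Ext^*_\cat{C}(I,I)$ is this restriction by construction, and hence agrees with Farinati-Solotar. For an arbitrary finite tensor category, where no explicit description in the spirit of \cite{farinatisolotar} is available, I would take this restriction property as the \emph{definition} of the Farinati-Solotar bracket, which is consistent with the literature and immediately matched by our construction.
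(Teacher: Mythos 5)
Your proposal is correct and follows essentially the same route as the paper: Theorem~\ref{thmE2derivedhom} applied to $\mathbb{T}=I$, Proposition~\ref{propnaturality} applied to the algebra unit $I\to\alg$ (with Theorem~\ref{thhmcomparisondeligne} identifying the target with the standard $E_2$-structure on Hochschild cochains), the monomorphism coming from the splitting of $I\to\mathbb{A}$ by the projection onto the unit component, and the identification with the Farinati--Solotar bracket via the observation that both brackets are restrictions of the Hochschild Gerstenhaber bracket. The only cosmetic differences are that the paper realizes the cochain-level splitting by modelling $\mathbb{A}^\bullet$ as $I^\bullet\otimes\mathbb{A}$ rather than via $\mathbb{A}\cong I\oplus K$, and appeals to the construction in \cite{farinatisolotar} rather than to \cite{hermann} for the restriction property.
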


\begin{proof}
	The unit $I$ is trivially an algebra in $\cat{C}$ that also lifts to a braided commutative algebra in $Z(\cat{C})$. This turns $\cat{C}(I,I^\bullet)$ into an $E_2$-algebra by Theorem~\ref{thmE2derivedhom} which by construction is equivalent to the $E_2$-algebra $Z(\cat{C})(\coalg,I^\bullet)$ from Proposition~\ref{propE2onhom}.
	By Theorem~\ref{thhmcomparisondeligne} the Hochschild cochain complex $\rint_{X\in \Proj\cat{C}}\cat{C}(X,X) $ is canonically equivalent as an $E_2$-algebra to $\cat{C}(I,\mathbb{A}^\bullet)$ and also to $Z(\cat{C})(\coalg,\ialg)$.  
	Therefore, up to equivalence, the map~\eqref{eqnExtHochschild} is the map
	\begin{align} Z(\cat{C})(\coalg,I^\bullet)\to Z(\cat{C})(\coalg,\ialg)\label{eqnExtHochschild3} \end{align}
	induced by the unit map $I\to\alg$ of $\alg\in Z(\cat{C})$.  Since this unit map is a morphism of algebras, \eqref{eqnExtHochschild3}  is a map of $E_2$-algebras by Proposition~\ref{propnaturality}. This gives us the  morphism of $E_2$-algebras~\eqref{eqnExtHochschild}.
	
	Next we need to prove that~\eqref{eqnExtHochschild3} is a monomorphism in cohomology and, for suitable models, also at cochain level:
	By means of the adjunction $L\dashv U$, 
	we can identify~\eqref{eqnExtHochschild3} 
	with the map \begin{align}\label{eqnExtHochschild4}\cat{C}(I,I^\bullet)\to\cat{C}(I,\mathbb{A}^\bullet) \ . 
		\end{align}
	Consider the projection $\mathbb{A}\to I\otimes I^\vee \cong I$ to the unit component.
	Then the composition 
	$I\to \mathbb{A}\to I$ is the identity of $I$, 
	i.e.\ $I\to\mathbb{A}$ is a split monomorphism (although $I\to \alg$ in $Z(\cat{C})$ is generally not split).
Since we can model $\mathbb{A}^\bullet$ as $I^\bullet \otimes \mathbb{A}$, we see that $I^\bullet \to \mathbb{A}^\bullet$ is  a split monomorphism as well.
	Hence, the monomorphism $I^\bullet \to \mathbb{A}^\bullet$ is absolute (i.e.\ preserved by any functor). 
	As a consequence, \eqref{eqnExtHochschild4} is a monomorphism and \eqref{eqnExtHochschild2} is also a monomorphism.

	In order to complete the proof, 
	we must compare the structure on cohomology with the one given by Farinati and Solotar
	\cite{farinatisolotar}:
By virtue of \eqref{eqnExtHochschild2} being a monomorphism of Gerstenhaber algebras and Theorem~\ref{thhmcomparisondeligne},
the Gerstenhaber structure that we construct on $\Ext_\cat{C}^*(I,I)$ is a restriction of the usual Gerstenhaber structure on Hochschild cohomology. The same is true for the Farinati-Solotar Gerstenhaber algebra structure by the construction in \cite{farinatisolotar}. Hence, both structures must agree on cohomology.
	\end{proof}

\begin{example}[Quantum groups]
	Let $\catf{u}_q(\mathfrak{sl}_2)$ be again the small quantum group at a primitive root of unity as discussed in \cite{lq}.
	The Ext algebra of $\catf{u}_q(\mathfrak{sl}_2)$ is computed in \cite{gk}; it is supported in even degree. 
	Its Gerstenhaber bracket is zero because $\catf{u}_q(\mathfrak{sl}_2)$ comes with an $R$-matrix (and hence its category of modules with a braiding). This uses that by \cite[Corollary~6.3.17 \& Remark~6.3.19]{hermann}
the Gerstenhaber bracket on $\Ext_\cat{C}^*(I,I)$ vanishes if $\cat{C}$ is braided.
The monomorphism $\Ext_{\catf{u}_q(\mathfrak{sl}_2)}^*(k,k) \to HH^*(  \catf{u}_q(\mathfrak{sl}_2)   )$ is of course only non-trivial in even degree. For the description in even degree, one uses that $ HH^{2*}(  \catf{u}_q(\mathfrak{sl}_2)   )$ is given by $\Ext_{\catf{u}_q(\mathfrak{sl}_2)}^*(k,k) \otimes Z(  \catf{u}_q(\mathfrak{sl}_2)  )$ modulo a certain ideal \cite[Proposition~5.6]{lq}. The map $\Ext_{\catf{u}_q(\mathfrak{sl}_2)}^*(k,k) \to HH^*(  \catf{u}_q(\mathfrak{sl}_2)   )$ is the concatenation
of $\Ext_{\catf{u}_q(\mathfrak{sl}_2)}^*(k,k) \ra{\id \otimes \text{unit}} \Ext_{\catf{u}_q(\mathfrak{sl}_2)}^*(k,k) \otimes Z(  \catf{u}_q(\mathfrak{sl}_2)  )$ with the quotient map. 
The statement of Corollary~\ref{corcohomftc} is that the chain level version of this map is a map of $E_2$-algebras.
\end{example}

\section{Generalizing a result of Menichi\label{secmenichi}}
In this section, we ask the question when the $E_2$-algebras built from homotopy invariants of braided commutative algebras naturally extend
to \emph{framed}
$E_2$-algebras, thereby making their cohomology a Batalin-Vilkovisky algebra.
We will use this to generalize a result of Menichi \cite{menichi}.

First of all, recall from \cite[Theorem~6.1~(4)]{shimizuunimodular} that for any finite tensor category $\cat{C}$ the 
 algebra $\alg=RI \in Z(\cat{C})$
 has the structure a Frobenius algebra if and only if $\cat{C}$ is unimodular.
 We will focus in this section on the case that $\cat{C}$ is additionally 
 pivotal, i.e.\ equipped with a monoidal isomorphism $-^{\vee\vee} \cong \id_\cat{C}$.
 The Frobenius structure on $\alg$ 
 comes from the fact 
 in the case $D\cong I$, we obtain via~\eqref{eqnLandR}
 an isomorphism $\Psi:\alg\to\coalg$
 which is in fact an isomorphism of 
 left $\alg$-modules, 
 and this is one of the many ways to 
 describe a Frobenius structure, see \cite{fuchsstigner}. 
 A direct computation using \cite[Remark~6.2]{shimizuunimodular}
 shows
 	\begin{equation}	  \tikzfig{pl1-s-e2}   \ = \  \Psi    \ ,    \normalsize\label{eqnpsilambda}
 \end{equation}\normalsize
 where $\Lambda :I\to\coalg$ is the so-called \emph{integral} of $\coalg$. 
 In fact, this Frobenius structure is symmetric:

\begin{lemma}\label{lemmasymfrobalg}
	For a unimodular pivotal finite tensor category $\cat{C}$,
	the Frobenius algebra $\alg \in Z(\cat{C})$ is symmetric.
\end{lemma}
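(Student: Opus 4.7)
My plan is to observe that any braided commutative Frobenius algebra in a braided monoidal category is automatically symmetric, after which the statement will follow immediately from Lemma~\ref{lemmadmno}.

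More concretely, I would first unpack the Frobenius datum on $\alg$ coming from the isomorphism $\Psi: \alg \to \coalg$ described in~\eqref{eqnpsilambda}: the Frobenius form is $\lambda := \epsilon_{\coalg} \circ \Psi : \alg \to I$, where $\epsilon_{\coalg} : \coalg \to I$ is the counit of the canonical coalgebra. The associated Frobenius pairing is then $\beta := \lambda \circ \mu_\alg : \alg \otimes \alg \to I$. Being a symmetric Frobenius algebra in the braided category $Z(\cat{C})$ is the condition that $\beta \circ c_{\alg,\alg} = \beta$, where $c_{\alg,\alg}$ is the braiding of $Z(\cat{C})$.

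By Lemma~\ref{lemmadmno}, the algebra $\alg$ is braided commutative, i.e., $\mu_\alg \circ c_{\alg,\alg} = \mu_\alg$. Post-composing with $\lambda$ on the left gives $\beta \circ c_{\alg,\alg} = \lambda \circ \mu_\alg \circ c_{\alg,\alg} = \lambda \circ \mu_\alg = \beta$ at once, which is exactly the symmetry we need. No graphical computation involving half braidings or the pivotal structure is required at this step; the pivotal/unimodular hypotheses enter only through their role in producing the Frobenius structure in the first place, which is already part of the setup of this section.

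The one mildly subtle point, which I would flag but not dwell on, is the identification of the Frobenius form coming from the left $\alg$-module isomorphism $\Psi$: one must check that the counit it induces is indeed $\epsilon_{\coalg} \circ \Psi$, so that $\beta$ genuinely factors as $\lambda \circ \mu_\alg$. This is a general feature of the correspondence between Frobenius structures and module isomorphisms $A \cong A^\vee$ (applied here via ${}^\vee \coalg \cong \alg$), and it does not require any input specific to $\cat{C}$. Accordingly, I do not anticipate any genuine obstacle in the argument.
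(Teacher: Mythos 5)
There is a genuine gap, and it lies in the very first sentence of your plan: the claim that a braided commutative Frobenius algebra in a braided monoidal category is automatically symmetric rests on the wrong notion of ``symmetric''. In a braided pivotal category that is not symmetric monoidal, a symmetric Frobenius algebra is \emph{not} defined by the condition $\beta\circ c_{\alg,\alg}=\beta$ on the pairing; symmetry is a condition formulated through the duality and the pivotal structure, namely that the two canonical morphisms $\alg\to\alg^\vee$ built from $\beta$ via the left and right dualities (identified using the pivotal structure) coincide --- equivalently, that the Nakayama automorphism is trivial; see \cite{fuchsstigner}. Your identity $\lambda\circ\mu_\alg\circ c_{\alg,\alg}=\lambda\circ\mu_\alg$ is just a restatement of braided commutativity (Lemma~\ref{lemmadmno}) and says nothing about this duality-theoretic condition. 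Indeed, the paper explicitly warns against exactly this conflation in the proof of Lemma~\ref{lemmaE2derivedhompiv}: ``note that the braided commutativity does not imply the symmetry because we are not working in a symmetric category.'' If your argument were valid, the subsequent use of \cite[Proposition~2.25~(i)]{correspondences} would show that \emph{every} braided commutative Frobenius algebra has trivial balancing, which is false; it is precisely because symmetry and braided commutativity are independent conditions that their conjunction yields the nontrivial conclusion $\theta_\alg=\id_\alg$.

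Consequently the pivotal structure cannot enter only ``through the existence of the Frobenius structure'': it is needed in the symmetry statement itself. The paper's proof reflects this: it identifies $\cat{C}\simeq\alg\mod_{Z(\cat{C})}$ as a $Z(\cat{C})$-module category \cite[Theorem~6.1~(2)]{shimizuunimodular}, upgrades it to a \emph{pivotal} module category using the pivotal structure on $\cat{C}$ (via \cite[Corollary~38]{internal}), recovers $\alg$ as the internal endomorphism object of an object of this pivotal module category, and then invokes \cite[Theorem~3.15]{relserre}, which asserts that such internal endomorphism objects are symmetric Frobenius algebras. A correct proof along your lines would have to verify the duality-based symmetry condition for the specific Frobenius form coming from $\Psi$ in~\eqref{eqnpsilambda}, which amounts to a genuine graphical computation with the non-crossing half braiding and the pivotal isomorphisms --- precisely the work your proposal declares unnecessary.
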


\begin{proof}
	We give the proof using	 the language of pivotal module categories from~\cite{relserre}:
	For the canonical algebra $\alg \in Z(\cat{C})$, denote by
	$\alg\mod_{Z(\cat{C})}$ the category of left $\alg$-modules in $Z(\cat{C})$.
	Then $\alg$ can be recovered as the endomorphisms of the left regular
	$\alg$-module $\alg$ in $Z(\cat{C})$;
	in short $\alg = \End_\alg(\alg,\alg)$. By
	\cite[Theorem~6.1~(2)]{shimizuunimodular} $\alg\mod_{Z(\cat{C})}\simeq
	\cat{C}$ as $Z(\cat{C})$-module categories. Since $\cat{C}$ is pivotal,
	$\cat{C}$ is also pivotal as a module category over itself.      Of
	course, $\cat{C}$ is also a module category over $Z(\cat{C})$, and it is in fact a pivotal module category by
	\cite[Corollary~38]{internal}. Therefore, $\alg\mod_{Z(\cat{C})}$ is
	also a pivotal $Z(\cat{C})$-module category.    By $\alg =
	\End_\alg(\alg,\alg)$ the object $\alg\in Z(\cat{C})$ can be recovered
	as the endomorphism object of an object  in a pivotal module category and hence inherits the
	structure of a symmetric Frobenius algebra in $Z(\cat{C})$ by
	\cite[Theorem~3.15]{relserre}.
\end{proof}

\begin{lemma}\label{lemmaE2derivedhompiv}
	For any unimodular pivotal finite tensor category $\cat{C}$, the canonical algebra $\alg \in Z(\cat{C})$ is not only braided commutative, but framed braided commutative in the sense that additionally the balancing of $\alg$ is trivial, $\theta_\alg =\id_\alg$.
	The same is true for the canonical coalgebra $\coalg \in Z(\cat{C})$. 
\end{lemma}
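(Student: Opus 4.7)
The plan is to reduce the assertion to a diagrammatic identity in $Z(\cat{C})$ using the explicit description of the ribbon/twist structure available when $\cat{C}$ is pivotal. Since $\cat{C}$ is pivotal, the Drinfeld center $Z(\cat{C})$ inherits a pivotal structure from $U$, making it into a ribbon category whose balancing $\theta$ is canonically determined by the pivotal structure of $\cat{C}$ together with the half braidings. In particular, for $(Y, c_{Y,-}) \in Z(\cat{C})$ the map $\theta_Y\colon Y\to Y$ admits a closed-form expression in the graphical calculus involving only the half braiding $c_{Y,-}$, the duality morphisms, and the pivotal isomorphism.

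It suffices to prove $\theta_\alg = \id_\alg$; the statement for $\coalg$ then follows because, under unimodularity, the map $\Psi\colon \alg \to \coalg$ from \eqref{eqnpsilambda} is an isomorphism in $Z(\cat{C})$, and the natural transformation $\theta$ satisfies $\Psi \circ \theta_\alg = \theta_\coalg \circ \Psi$. To establish $\theta_\alg = \id_\alg$ I would invoke the universal property of the end $\alg = \int_{X\in\cat{C}} X\otimes X^\vee$ and show that $\pi_X \circ \theta_\alg = \pi_X$ for each $X\in \cat{C}$, where $\pi_X$ denotes the dinatural structure morphism. Substituting the explicit form of the non-crossing half braiding from \eqref{defeqnhalfbraiding} into the formula for $\theta_\alg$ turns this into a pure diagrammatic identity between two morphisms $\alg \to X\otimes X^\vee$ in $\cat{C}$. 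After a few applications of rigidity and the pivotal structure, the loop produced by the twist collapses, leaving a pivotal trace over $X$ that is absorbed by the dinaturality of the end.

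The unimodularity assumption enters through the identification $D\cong I$, which is what makes the loop collapse cleanly: without it, the loop would acquire a factor of $D$ (reflecting the Radford formula $-^{\vee\vee\vee\vee}\cong D\otimes -\otimes D^{-1}$), and the balancing would fail to be trivial. The main obstacle will therefore be the bookkeeping of this diagrammatic calculation, specifically tracking how the Radford isomorphism together with pivotality allows the two pivotal structures (the one on $\cat{C}$ and the induced one on $\alg$) to cancel, yielding the identity. Once this is established for $\alg$, the statement for $\coalg$ follows via transport along $\Psi$, or alternatively by the dual diagrammatic calculation using the coend structure of $\coalg = \int^{X\in\cat{C}} X^\vee \otimes X$ and its half braiding from \eqref{firsthalfbraiding}.
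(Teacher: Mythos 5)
Your reduction of the claim for $\coalg$ to the claim for $\alg$ (via $\alg\cong\coalg$ in $Z(\cat{C})$, which uses unimodularity through \eqref{eqnLandR}, together with naturality of the balancing) is fine and is essentially how the paper treats $\coalg$. For $\alg$ itself, however, the paper does not argue diagrammatically with the non-crossing half braiding at all: it first proves (Lemma~\ref{lemmasymfrobalg}) that $\alg$ is a \emph{symmetric Frobenius} algebra in $Z(\cat{C})$ --- the Frobenius structure exists because unimodularity gives $\alg\cong\coalg$, and symmetry comes from realizing $\alg$ as the endomorphism object of the pivotal module category $\alg\mod_{Z(\cat{C})}\simeq\cat{C}$ \cite{shimizuunimodular,internal,relserre} --- and then invokes \cite[Proposition~2.25~(i)]{correspondences}: a braided commutative symmetric Frobenius algebra has trivial balancing, using braided commutativity from Lemma~\ref{lemmadmno}. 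So in the paper unimodularity enters through the existence of the Frobenius form, not through a ``$D$-factor in a loop''.

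The genuine gap in your proposal is the sentence claiming that ``after a few applications of rigidity and the pivotal structure, the loop produced by the twist collapses, leaving a pivotal trace over $X$ that is absorbed by the dinaturality of the end''. The balancing \eqref{eqnbalancingdef} is a partial pivotal trace of the self-braiding $c_{\alg,\alg}$, so the loop you must close runs over $\mathbb{A}$ itself, not over a single object $X$: computing $\pi_X\circ\theta_\alg$ with the non-crossing half braiding \eqref{defeqnhalfbraiding} forces you to use the end component indexed by $\mathbb{A}\otimes X$ and the (co)evaluation of $\mathbb{A}$, hence an identification of $\mathbb{A}^\vee$ with a coend and a trace over the end. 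Dinaturality only relates components along morphisms of $\cat{C}$; it cannot ``absorb'' such a trace, and closing loops over the canonical (co)end is precisely the operation that produces (co)integral-type morphisms and the distinguished invertible object --- this is where all the difficulty, and the actual use of unimodularity and pivotality, is concentrated. You assert, but never verify, that the outcome is the identity when $D\cong I$ and that the general obstruction is exactly a factor of $D$; no diagrammatic identity is actually established, so the key step $\pi_X\circ\theta_\alg=\pi_X$ remains unproven. (A smaller point: the pivotal structure on $Z(\cat{C})$ yields a balancing via \eqref{eqnbalancingdef}, but not necessarily a ribbon structure, and none is needed.) To repair the argument you would either have to carry out this end/coend computation in full, or argue structurally as the paper does, via the symmetric Frobenius property of $\alg$ and \cite{correspondences}.
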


\begin{proof} The canonical algebra $\alg \in
	Z(\cat{C})$ is a symmetric Frobenius algebra by Lemma~\ref{lemmasymfrobalg}
	and 
	braided commutative (Lemma~\ref{lemmadmno}) (note that the braided commutativity does
	\emph{not} imply the symmetry because we are not working in a symmetric
	category).           Thanks to
	\cite[Proposition~2.25~(i)]{correspondences}, this implies that the
	balancing of $\alg$ is trivial, $\theta_\alg =\id_\alg$. The same
	holds true for the canonical coalgebra $\coalg\in Z(\cat{C})$ because $\alg \cong \coalg$ as objects in $Z(\cat{C})$ thanks to unimodularity. 
\end{proof}

The Drinfeld center $Z(\cat{C})$ of a pivotal finite tensor category $\cat{C}$ comes with an induced pivotal structure.
As usual, a pivotal structure allow us to define a balancing on $X\in Z(\cat{C})$ by
\begin{align}\theta_X= \tikzfig{balancing-s} \ ,  \label{eqnbalancingdef}
\end{align} 
where the black dot is the natural isomorphism $X^\vee \cong {^\vee X}$ 
given by the pivotal structure. 

We are now ready to prove a statement on framed extensions of the $E_2$-algebras constructed 
 from homotopy invariants of braided commutative algebras:
 
\begin{theorem}\label{thmE2derivedhompiv}
	Let $\mathbb{T} \in \cat{C}$ be an algebra in a unimodular pivotal finite tensor category $\cat{C}$
	with a lift to a framed braided commutative algebra $\somealg \in Z(\cat{C})$.
	Then the multiplication  and the half braiding of $\mathbb{T}$ induce the structure of a framed
	$E_2$-algebra on the space $\cat{C}(I,\mathbb{T}^\bullet)\simeq Z(\cat{C}) ( \coalg, \isomealg      )$ of homotopy invariants of $\mathbb{T}$. 
\end{theorem}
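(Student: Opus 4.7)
The plan is to upgrade the entire machinery of Section~\ref{secinvariantscommalg} from the braided to the framed braided setting, exploiting the pivotal structure of $\cat{C}$ to endow $Z(\cat{C})$ with a balancing. Throughout, I would replace the braid groups $B_n$ with the framed braid groups $FB_n = B_n \ltimes \mathbb{Z}^n$; braided operads with \emph{framed braided operads} (operads whose arity $n$ carries a group action of $FB_n$, defined in analogy to Section~\ref{secinvariantscommalg}); braided monoidal categories with balanced braided monoidal categories; and the Recognition Principle for $E_2$ in terms of $BP_n$ with its framed analogue identifying $BPFB_n$, where $PFB_n = P_n \ltimes \mathbb{Z}^n$ is the pure framed braid group, with the configuration space underlying the framed $E_2$-operad. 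By Lemma~\ref{lemmaE2derivedhompiv}, $\alg$, $\coalg$, and by assumption $\somealg$ are \emph{framed} braided (co)commutative in the sense that, in addition to (co)commutativity, their balancings are trivial; this triviality is precisely the extra input needed to promote equivariance from $B_n$ to $FB_n$.

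Building on this, I would construct a framed analogue $J_\somealg^{\catf{fr}}$ of the operad $J_\somealg$ through the same arity-wise pullback square~\eqref{eqncheckDdefined}, but now read as a pullback of complexes with $FB_n$-action on the mapping spaces $Z(\cat{C})(\isomealg^{\otimes n},\isomealg)$ and $Z(\cat{C})(\somealg^{\otimes n},\isomealg)$, where the framing generators act via the balancings inherited from the pivotal structure on $Z(\cat{C})$. The map $k \to Z(\cat{C})(\somealg^{\otimes n},\isomealg)$ selecting $\iota \circ \mu^n_\somealg$ is $FB_n$-equivariant because braided commutativity ensures $B_n$-equivariance (as in Section~\ref{secinvariantscommalg}) and $\theta_\somealg = \id_\somealg$ ensures the framing generators act trivially. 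The remainder of Proposition~\ref{propJ} then goes through verbatim to show that $J_\somealg^{\catf{fr}}$ is an acyclic framed braided operad and that $\isomealg$ is a framed braided $J_\somealg^{\catf{fr}}$-algebra.

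The framed versions of Lemma~\ref{lemmabop1} and Lemma~\ref{lemmabop2} are immediate: an enriched lax monoidal functor of balanced braided categories that respects the balancings preserves framed braided algebras, and for targets in a symmetric category, framed braided algebra structures descend along the unit of the symmetrization adjunction for framed braided operads. Applied to $Z(\cat{C})(\coalg,-)$, which is lax braided monoidal since $\coalg$ is a coalgebra and braided cocommutative, and additionally compatible with balancings by the framed cocommutativity $\theta_\coalg = \id_\coalg$ from Lemma~\ref{lemmaE2derivedhompiv}, this produces a framed braided $J_\somealg^{\catf{fr}}$-algebra structure on $Z(\cat{C})(\coalg,\isomealg)$. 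After choosing a resolution $\check J_\somealg^{\catf{fr}} \to J_\somealg^{\catf{fr}}$ with arity-wise projectively cofibrant $k[FB_n]$-module structures, the symmetrization satisfies
\begin{align*}
(\catf{Sym}\,\check J_\somealg^{\catf{fr}})(n) \;=\; \check J_\somealg^{\catf{fr}}(n)/PFB_n \;\simeq\; C_*(BPFB_n;k)\, ,
\end{align*}
which by the framed Recognition Principle is a model for the $k$-chains of the framed $E_2$-operad. Together with the equivalence $\cat{C}(I,\mathbb{T}^\bullet) \simeq Z(\cat{C})(\coalg,\isomealg)$ established in the proof of Theorem~\ref{thmE2derivedhom}, this yields the desired framed $E_2$-structure.

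The main obstacle is the operadic bookkeeping for framed braided operads: one must set up enrichment over balanced braided categories, verify that the symmetrization-restriction adjunction passes to the framed setting, and in particular confirm that the Recognition Principle of~\cite{fie96} extends, so that $BPFB_n$ really does model the components of the framed $E_2$-operad. These ingredients are standard but need to be assembled. Once the framework is in place, the entire substantive input from the tensor category is the triviality of the balancings of $\somealg$ and $\coalg$ granted by the unimodular pivotal hypothesis, and the rest is a mechanical upgrade of the braided proofs.
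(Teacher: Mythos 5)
Your proposal is correct and follows essentially the same route as the paper: the paper's proof likewise uses the pivotal structure to balance $Z(\cat{C})$, replaces braid groups by framed braid groups so that Proposition~\ref{propE2onhom} carries over to the balanced/framed setting, and then invokes Lemma~\ref{lemmaE2derivedhompiv} (triviality of the balancings of $\somealg$ and $\coalg$) to rerun the proof of Theorem~\ref{thmE2derivedhom}. You simply spell out in more detail the operadic bookkeeping (framed analogue of $J_\somealg$, framed recognition via $P_n\ltimes\mathbb{Z}^n$, framed versions of Lemmas~\ref{lemmabop1} and~\ref{lemmabop2}) that the paper leaves implicit.
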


\begin{proof}If $\cat{C}$ is pivotal, then  $Z(\cat{C})$ is balanced via~\eqref{eqnbalancingdef}; operadically speaking, this is an extension   to a \emph{framed $E_2$-algebra} in categories~\cite{salvatorewahl}.
	Passing from $E_2$ to framed $E_2$ means passing from braid groups to framed braid groups. Therefore,
	it is straightforward to observe that Proposition~\ref{propE2onhom} remains true if we replace the braided category by a balanced braided category and the braided commutative algebra by a framed braided commutative algebra.
	We can now proceed as in the proof of Theorem~\ref{thmE2derivedhom} because $\coalg$ is \emph{framed} braided cocommutative by Lemma~\ref{lemmaE2derivedhompiv}.
\end{proof}

Theorem~\ref{thmE2derivedhompiv} has the following application:
Menichi proves in \cite[Theorem~63]{menichi}
that for a finite-dimensional pivotal and unimodular Hopf algebra $A$, the inclusion $\Ext_A^*(k,k)\to HH^*(A;A)$ is not only a monomorphism of Gerstenhaber algebras, but actually a monomorphism of Batalin-Vilkovisky algebras. 
We can use Theorem~\ref{thmE2derivedhom} to give a generalization of this result to a result at cochain level that holds for all unimodular pivotal finite tensor categories, not only those coming from pivotal and unimodular Hopf algebras.

\begin{corollary}\label{corgenmen}
	For any unimodular pivotal finite tensor category $\cat{C}$, both the self-extension algebra $\cat{C}(I,I^\bullet)$ and the Hochschild cochain complex $\rint_{X\in\Proj\cat{C}}\cat{C}(X,X)$ come equipped with  a framed $E_2$-algebra structure such that 
	\begin{align}
	\cat{C}(I,I^\bullet)\to \rint_{X\in \Proj\cat{C}}\cat{C}(X,X)   
	\end{align}  is a map (and with suitable models even a monomorphism) of framed $E_2$-algebras. After taking cohomology, it induces a monomorphism
	\begin{align}
	\Ext_\cat{C}^*(I,I) \to HH^*(\cat{C}) 
	\end{align} of Batalin-Vilkovisky algebras.
\end{corollary}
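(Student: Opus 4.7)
The plan is to specialize Theorem~\ref{thmE2derivedhompiv} twice and compare the outputs. First, I apply it to the algebra $\mathbb{T}=I$ with its trivial lift to $I\in Z(\cat{C})$: the unit, equipped with the identity half braiding and identity balancing, is evidently a framed braided commutative algebra, so $\cat{C}(I,I^\bullet)$ inherits the structure of a framed $E_2$-algebra. Second, I apply Theorem~\ref{thmE2derivedhompiv} to $\mathbb{T}=\mathbb{A}$ with its canonical lift $\alg\in Z(\cat{C})$, which is framed braided commutative by Lemma~\ref{lemmaE2derivedhompiv} (this is the place where unimodularity and pivotality are essential). By Theorem~\ref{thhmcomparisondeligne} together with Proposition~\ref{propcocomplex}, the underlying $E_2$-algebra $\cat{C}(I,\mathbb{A}^\bullet)$ is equivalent to $\rint_{X\in\Proj\cat{C}}\cat{C}(X,X)$ with Deligne's $E_2$-structure, and this equivalence is produced by constructions that extend verbatim to the framed enhancement.

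For the morphism of framed $E_2$-algebras, I invoke a framed analogue of Proposition~\ref{propnaturality} applied to the unit map $\varphi : I\to\alg$ in $Z(\cat{C})$, which is a map of algebras automatically compatible with the (trivial) balancings on both sides. The proof of Proposition~\ref{propnaturality} transfers without essential modification: one replaces each braided operad $J_\somealg$ by its framed variant, built from the analogous pullback in which $B_n$ is replaced by the framed braid group, and forms the acyclic framed operad $J_I \times_\varphi J_\alg$ exactly as in \eqref{pullbackpullback}. Applying the lax monoidal framed braided functor $Z(\cat{C})(\coalg,-)$ to the induced map $I^\bullet\to\alg^\bullet$ of algebras over this operad, and then transferring the result along the adjunction $L\dashv U$, yields the desired morphism of framed $E_2$-algebras $\cat{C}(I,I^\bullet) \to \cat{C}(I,\mathbb{A}^\bullet)\simeq \rint_{X\in\Proj\cat{C}}\cat{C}(X,X)$.

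For the monomorphism statement, I reuse the argument from Corollary~\ref{corcohomftc}: the projection of the end $\mathbb{A}=\int_{X\in\cat{C}} X\otimes X^\vee$ to its $X=I$ component provides a splitting of the unit morphism $I\to\mathbb{A}$ in $\cat{C}$. Choosing a model such as $\mathbb{A}^\bullet \simeq I^\bullet\otimes\mathbb{A}$, this splits the inclusion $I^\bullet\to\mathbb{A}^\bullet$ absolutely, so $\cat{C}(I,I^\bullet)\to\cat{C}(I,\mathbb{A}^\bullet)$ is a split monomorphism at cochain level and injective on cohomology. Since the cohomology of a framed $E_2$-algebra is canonically a Batalin-Vilkovisky algebra by Getzler \cite{getzler}, the induced map $\Ext_\cat{C}^*(I,I)\to HH^*(\cat{C})$ is a monomorphism of Batalin-Vilkovisky algebras.

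The principal obstacle is verifying that Proposition~\ref{propnaturality} — and behind it Proposition~\ref{propE2onhom} together with the operad $J_\somealg$ — genuinely extend to the framed setting. All pieces are in place: the balancing on $Z(\cat{C})$ coming from the pivotal structure on $\cat{C}$ promotes the braid group actions used in \eqref{eqncheckDdefined} and \eqref{pullbackpullback} to framed braid group actions; the triviality of the balancing on $\alg$ and $\coalg$ (Lemma~\ref{lemmaE2derivedhompiv}) is exactly what makes the maps~\eqref{eqnmapBn} framed-equivariant; and the Recognition Principle for framed $E_2$ identifies the symmetrization of the resulting acyclic framed operad with $C_*(FE_2;k)$. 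Beyond that the argument is formal, since the splitting on cochain level persists under every forgetful functor in sight.
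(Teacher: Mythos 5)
Your proposal is correct and follows essentially the same route as the paper: the framed $E_2$-structures on $\cat{C}(I,I^\bullet)$ and on $\cat{C}(I,\mathbb{A}^\bullet)\simeq \rint_{X\in\Proj\cat{C}}\cat{C}(X,X)$ come from Theorem~\ref{thmE2derivedhompiv} together with Lemma~\ref{lemmaE2derivedhompiv}, the map of framed $E_2$-algebras is induced by the unit $I\to\alg$ via the (framed) naturality statement, and the monomorphism is obtained by the same absolute splitting argument as in Corollary~\ref{corcohomftc}. You merely spell out the framed extension of Proposition~\ref{propnaturality} and the operads $J_\somealg$, which the paper leaves implicit in the phrase ``proceed as for Corollary~\ref{corcohomftc}''.
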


\begin{proof}
	We obtain the \emph{framed} $E_2$-algebras $\cat{C}(I,I^\bullet)$ and $\rint_{X\in \Proj\cat{C}}\cat{C}(X,X)\simeq \cat{C}(I,\mathbb{A}^\bullet)$ thanks to
	Theorem~\ref{thmE2derivedhompiv} (we need to use Lemma~\ref{lemmaE2derivedhompiv} again).
	Then we can proceed as for Corollary~\ref{corcohomftc}.
	\end{proof}

\small 
\spaceplease

\end{document}